\documentclass[11pt]{article}
\usepackage[dvipsnames]{xcolor}
\usepackage{ulem}
\usepackage{pgfplots}
\usepackage{booktabs}
\usepackage{graphicx,wrapfig,lipsum}
\usepackage{float}    %
\usepackage{verbatim} %
\usepackage{amsmath, amssymb, amsthm, mathtools}  %
\usepackage{subfigure}   %
\usepackage[colorlinks=true,citecolor=blue,linkcolor=blue,urlcolor=blue]{hyperref}
\usepackage{cleveref}
\usepackage{bookmark}
\usepackage{fullpage}
\usepackage{enumerate}
\usepackage{paralist}
\usepackage{xspace}
\usepackage{multirow}
\usepackage{caption}
\usepackage{bbm}
\usepackage{bm}
\usepackage{url}
\usepackage{lipsum}
\usepackage{algorithm}%
\usepackage{algorithmicx}%
\usepackage{algpseudocode}%
\usepackage{color}
\usepackage{microtype}
\usepackage[section]{placeins}
\usepackage{lscape}
\usepackage{multirow}
\usepackage{todonotes}

\usepackage{tcolorbox}

\usepackage[numbers]{natbib}

\usepackage{mathtools}
\DeclarePairedDelimiter{\norm}{\lVert}{\rVert}

\newcommand{\parent}{\mathrm{par}}
\newcommand{\child}{\mathrm{child}}

\newcommand{\R}{\mathbb{R}}

\newcommand{\bbbone}{\mathbb{I}}
\newcommand{\supp}{\mathrm{supp}}
\newcommand{\dist}{\operatorname{dist}}

\newcommand{\B}{\mathcal{B}}
\newcommand{\I}{\mathcal{I}}
\newcommand{\J}{\mathcal{J}}
\newcommand{\setA}{\mathcal{A}}
\newcommand{\setV}{\mathcal{V}}
\newcommand{\setE}{\mathcal{E}}
\newcommand{\setW}{\mathcal{W}}
\newcommand{\setR}{\mathcal{R}}
\newcommand{\C}{\mathcal{C}}
\newcommand{\trim}{\mathrm{prune}}
\newcommand{\bw}{w}

\newcommand{\vx}{\bm{x}}
\newcommand{\vy}{\bm{y}}
\newcommand{\vz}{\bm{z}}

\newcommand{\vo}{\bm{o}}
\newcommand{\vc}{\bm{c}}
\newcommand{\vb}{\bm{b}}
\newcommand{\bs}{\bm{s}}

\newcommand{\vlambda}{\bm{\lambda}}
\newcommand{\valpha}{\bm{\alpha}}
\newcommand{\vxi}{\bm{\xi}}
\newcommand{\vzeta}{\bm{\zeta}}

\newcommand{\vzero}{\bm{0}}

\newcommand{\mQ}{\bm{Q}}
\newcommand{\mY}{\bm{Y}}
\newcommand{\mA}{\bm{A}}

\usepackage{amsmath, amssymb, amsthm}
\newtheorem{theorem}{Theorem}
\newtheorem{lemma}{Lemma}
\newtheorem{proposition}{Proposition}
\newtheorem{corollary}{Corollary}
\newtheorem{claim}{Claim}

\DeclareMathOperator*{\argmin}{argmin}

\theoremstyle{definition}
\newtheorem{definition}{Definition}
\newtheorem{assumption}{Assumption}

\theoremstyle{remark}

\usepackage{algorithm}
\usepackage{algpseudocode}
\usepackage{tikz}
\usepackage{multirow}
\usepackage{hyperref}

\title{Solving Convex Quadratic Optimization with Indicators Over Structured Graphs}

\author{
Aaresh Bhathena\thanks{Department of Industrial and Operations Engineering, 
University of Michigan, Ann Arbor, MI, USA. 
\texttt{aareshfb@umich.edu}}
\and
Salar Fattahi\thanks{Department of Industrial and Operations Engineering, 
University of Michigan, Ann Arbor, MI, USA. 
\texttt{fattahi@umich.edu}}
\and
Andr\'es G\'omez\thanks{Daniel J. Epstein Department of Industrial and Systems Engineering, 
University of Southern California, Los Angeles, CA, USA. 
\texttt{gomezand@usc.edu}}
\and
Simge K\"u\c{c}\"ukyavuz\thanks{Department of Industrial Engineering and Management Sciences, 
Northwestern University, Evanston, IL, USA. 
\texttt{simge@northwestern.edu}}
}

\date{}

\usepackage{comment}
\begin{document}
\maketitle
\begin{abstract}
    This paper studies convex quadratic minimization problems in which each continuous variable is coupled with a binary indicator variable. We focus on the structured setting where the Hessian matrix of the quadratic term is positive definite and exhibits sparsity. We develop an exact parametric dynamic programming algorithm whose computational complexity depends explicitly on the treewidth of the Hessian’s support graph, its volume growth, and an appropriate margin parameter. Under suitable structural conditions, the overall complexity scales linearly with the problem dimension.
    To demonstrate the practical impact of our approach, we introduce a novel framework for joint forecasting and outlier detection by extending exponential smoothing to time series with outliers. Computational experiments on both synthetic and real data sets show that our method significantly outperforms state-of-the-art solvers.
\end{abstract}
\noindent\textbf{Keywords:} 
Mixed-integer quadratic programming, dynamic programming, 
outlier detection, exponential smoothing.

\section{Introduction}\label{sec: Intro}

We consider the following mixed-integer quadratic program (MIQP), defined by a symmetric and positive definite matrix $\mQ\in\R^{n\times n}$ and vectors $\vlambda,\vc\in\R^n$:
\begin{subequations}\label{eq: MIQP}
	\begin{align}
		\min_{\vx\in\R^n,\vz\in\{0,1\}^n}\qquad& \dfrac{1}{2}\vx^\top \mQ \vx+\vc^\top \vx+\vlambda^\top \vz\label{eq: MIQP obj}\\ 
		\text{s.t.}\qquad &\vx_i(1-\vz_i)=0&  i=1,2,\ldots, n. \label{eq: MIQP_const}
	\end{align}
\end{subequations}
In this problem, the binary vector $\vz\in\{0,1\}^n$ encodes the support of the continuous vector $\vx\in\R^n$. Specifically the constraint $\vx_i(1-\vz_i)=0$ enforces that $\vx_i=0$ whenever $\vz_i=0$, and $\vz_i=1$ allows $\vx_i\in \R$ to be unconstrained. The vector $\vlambda \in \R^n$ acts as the component-wise regularization parameter that promotes sparsity in $\vx$. We assume throughout that $\vlambda_i>0$ for every $i=1,\ldots, n$ as $\vlambda_i\le 0$ implies that $z_i=1$ at optimality.  Without loss of generality, we also normalize the diagonal entries of $\mQ$ to one by rescaling each variable $\vx_i$ as $\vx_i\rightarrow \vx_i/\sqrt{\mQ_{i,i}}$. 

This work focuses on instances of Problem~\eqref{eq: MIQP} in which the sparsity pattern of the Hessian matrix $\mQ \in \mathbb{R}^{n \times n}$ coincides with the adjacency matrix of a graph—hereafter referred to as the \textit{support graph}—with certain sparsity structures. Specifically, we consider support graphs characterized by a \textit{bounded treewidth} and a \textit{polynomial volume growth} property. The former captures the extent to which the graph resembles a tree, while the latter imposes an upper bound on the number of nodes contained within any fixed graph distance from a given node. While treewidth is a classical concept in graph theory, polynomial volume growth is a less common but equally meaningful structural property; both notions are formally introduced in Section~\ref{sec: preliminaries}.

Problem~\eqref{eq: MIQP} arises in several domains, including sparse regression \citep{bertsimas2016best,bertsimas2024slowly,del2020subset}, probabilistic graphical models~\citep{han2022polynomial,liu2025polyhedral, fattahi2021scalable, Manzour21,kucukyavuz2022consistent,xu2024integer,xu2025}, outlier detection in time series~\citep{atamturk2021sparse}, and network inference~\citep{fattahi2021scalable,ravikumar2025efficient}.
In this paper, we focus on the problem of forecasting with outlier correction in time series, where the proposed formulation arises naturally and proves particularly effective.

\subsection{An overview of our contributions}

At the core of our proposed method lies a \textit{pruning} technique that efficiently eliminates suboptimal choices of the binary vector $\vz \in \{0,1\}^n$, thereby reducing the search space from exponential to polynomial size. Consider the following equivalent formulation of Problem \eqref{eq: MIQP}:
\begin{align}
    \min_{\substack{\vx\in\R^n\\\vz\in\{0,1\}^n\\ \vx\circ(1-\vz)=0}}\! \left\{\dfrac{1}{2}\vx^\top \mQ \vx+\vc^\top \vx+\vlambda^\top \vz\right\} = \min_{\vx\in \mathbb{R}^n}\left\{\min_{\vz\in \{0,1\}^n}\left\{\dfrac{1}{2}\vx^\top\left(\mQ\circ \vz\vz^\top\right)\vx+(\vc\circ \vz)^\top \vx+\vlambda^\top \vz\right\}\right\},
\end{align}
where $\circ$ denotes the entry-wise product. Upon defining a convex quadratic function $p_{\vz}(\vx) := \dfrac{1}{2}\vx^\top\left(\mQ\circ \vz\vz^\top\right)\vx+(\vc\circ \vz)^\top \vx+\vlambda^\top \vz$ for every fixed $\vz\in \{0,1\}^n$, the above problem reduces to the following two-stage optimization problem:
\begin{align}
    \min_{\vx\in \mathbb{R}^n} f(\vx), \quad \text{where}\quad f(\vx) = \min_{\vz\in \{0,1\}^n}\ p_{\vz}(\vx).
\end{align}
The above two-stage formulation induces a corresponding two-stage solution strategy for the original problem~\eqref{eq: MIQP}: first, characterize the \textit{parametric cost} $f:\mathbb{R}^n\to\mathbb{R}$ by projecting out the binary variables $\vz$, and then optimize $f$ directly with respect to $\vx$. Evidently, the first stage constitutes the computational bottleneck: efficient solution of the problem hinges on effectively characterizing $f$, which, as implied by the above reformulation, is a piecewise function composed of up to $2^n$ convex quadratic pieces. In isolation, this approach offers no apparent advantage over exhaustively enumerating all $\vz$ configurations. However, we show that when the Hessian matrix $\mQ$ exhibits a specific graph structure, this enumeration can be dramatically accelerated by systematically {\it pruning} choices of $\vz$ that are provably suboptimal for all possible values of $\vx$.

Our pruning strategy builds upon two key ideas. First, since $\mQ$ is positive definite, any optimal solution $\vx^\star$ must have a bounded norm; that is, $\|\vx^\star\|_{\infty} \le U$ for some constant $U>0$. Consequently, the parametric cost $f$ needs to be characterized only within the bounded region $\mathcal{D} = \{\vx : \|\vx\|_{\infty} \le U\}$ containing the optimal solution. Second, we show that, under certain structural conditions, only a polynomial number of quadratic pieces $p_{\vz}$ with $\vz \in \{0,1\}^n$ are required to represent $f$ within this region. The key insight underlying this result is that, for any two sparsity patterns $\vz_1, \vz_2 \in \{0,1\}^n$ with significant overlap (formally characterized by the notion of \textit{$m$-similarity}; see Definition~\ref{def: msimilar}), the roots of the polynomial function $p_{\vz_1} - p_{\vz_2}$ grow exponentially with the length of the overlap. Hence, for a sufficiently long overlap, these roots lie outside $\mathcal{D}$, implying that either $p_{\vz_1}(\vx) > p_{\vz_2}(\vx)$ or $p_{\vz_1}(\vx) < p_{\vz_2}(\vx)$ within this region. In such cases, one of these quadratic pieces can be safely discarded (pruned), along with its corresponding sparsity pattern, without affecting the characterization of the parametric cost $f$.

While the existence of an efficient representation of the parametric cost $f$ is encouraging, it does not by itself guarantee an efficient procedure for constructing it. To this end, we develop an efficient algorithm, called the \textit{parametric algorithm}, for characterizing $f$ in settings where the matrix $\mQ$ admits a tree decomposition of small width. Our proposed parametric algorithm constructs $f$ efficiently by {dynamic programming} (DP) operating over the tree decomposition of $\mQ$. The overall computational complexity of the proposed method depends on the width of the tree decomposition, the volume growth of the support graph, and a suitable notion of margin for the problem, each of which will be discussed in detail in subsequent sections.

In practice, our parametric algorithm outperforms off-the-shelf solvers by orders of magnitude, solving problems with up to 20,000 variables in seconds to minutes, well beyond the reach of existing off-the-shelf solvers. We further demonstrate the practical relevance of the framework through an application to time-series forecasting. Leveraging a new MIQP formulation, we develop an extension of exponential smoothing that jointly performs forecasting and outlier detection. Computational results show that this integration enhances predictive performance and robustness against outliers on real-world data.

\subsection{Related work}
For a general positive definite matrix $\mQ$, Problem~\eqref{eq: MIQP} is known to be NP-hard~\citep{chen2014NPhard}. Consequently, existing methods typically rely either on sequential convex relaxations, often strengthened through cutting-plane techniques, or on exploiting specific structural properties that render special cases tractable. \medskip

\paragraph{Methods based on sequential convex relaxation.} Classical approaches based on convex relaxation employ \textit{Big-$M$} formulations to encode indicator variables~\citep{glover1975improved}. These formulations have been widely adopted for mixed-integer quadratic and regression-type problems, often incorporated within branch-and-bound techniques~\citep{bertsimas2016best,bertsimas2020sparse,dedieu2021learning}. While effective for small- to medium-scale instances, Big-$M$ formulations generally yield weak relaxations and suffer from numerical instability, leading to poor scalability on large problems~\citep{hazimeh2022sparse}.  
A major breakthrough occurred with the introduction of the \textit{perspective reformulation} technique, which provides significantly tighter convex relaxations for separable mixed-integer quadratic programs. Originally proposed by~\citet{stubbs1996branch} and further developed in a series of works~\citep{akturk2009strong,frangioni2009computational,gunluk2010perspective}, perspective reformulations have since become a cornerstone of modern algorithms for sparse and structured optimization~\citep{bertsimas2020sparse,xie2020scalable,gomez2024note,wei2022ideal,wei2024convex}. 
In practice, these techniques form the backbone of state-of-the-art commercial solvers such as \textsc{Gurobi}, which integrate them into MIQP solvers via specialized cutting planes, presolve reductions, and perspective-based convexification modules. However, despite their effectiveness, such relaxation-based methods must still be embedded within branch-and-bound or branch-and-cut frameworks, whose exponential worst-case complexity continues to render them computationally prohibitive for large-scale problems.\medskip

\paragraph{Methods for structured $\mQ$.} Given the problem’s exponential worst-case complexity, another line of work has studied instances in which $\mQ$ possesses structural properties that enable more efficient algorithms. Examples include cases where $\mQ$ is diagonal~\citep{ceria1999convex}, Stieltjes~\citep{atamturk2018strong,han2022polynomial,liu2025polyhedral}, rank-one~\citep{shafiee2024constrained, han2025compact}, admits a sparse factorization $\mQ = \mQ_0^\top \mQ_0$~\citep{del2020subset}, or exhibits other special structures~\citep{lee2024convexification,das2008algorithms}.
Closely related to this line of work are studies that exploit banded or tree-structured sparsity patterns in $\mQ$. In the special case where $\mQ$ is tridiagonal, \citet{liu2023graph} proposed a DP algorithm based on a shortest-path formulation that recovers the exact solution in $\mathcal{O}(n^2)$ time and memory. Building on this idea, \citet{gomez2024real} extended the approach to general banded matrices and developed a \textit{fully polynomial-time approximation scheme} (FPTAS) for Problem~\eqref{eq: MIQP}.

As an extension of the DP approach introduced by \citet{liu2023graph}, \citet{bhathena2025parametric} established that Problem~\eqref{eq: MIQP} can be solved exactly in $\mathcal{O}(n^2)$ time when the sparsity graph of $\mQ$ is a path or a tree. This result already motivates the use of tree decompositions, which provide a principled framework for extending tractability beyond tree-structured graphs. However, the approach by \citet{bhathena2025parametric} relies on a delicate characterization of the conjugates of univariate convex quadratic functions, a property that does not generalize easily beyond tree graphs.\medskip

\paragraph{Methods based on small treewidth.} Graphs with small {treewidth} were first introduced by~\citet{halin1976s} and subsequently popularized by~\citet{robertson1986graph} \citep[see][for a historical account]{diestel2025graph}. The notion of bounded treewidth is fundamental in algorithmic graph theory because it characterizes classes of graphs on which DP can be applied efficiently~\citep{bodlaender1992tourist}. Since the late 1980s, it has been well established that several classically intractable combinatorial problems, such as \textit{Independent Set}, \textit{Graph Coloring}, and \textit{Hamiltonian Cycle}, admit polynomial-time solutions via DP when restricted to graphs with small treewidth~\citep{bern1987linear, bodlaender1988dynamic, arnborg1989linear}.

In contrast, Problem~\eqref{eq: MIQP} exhibits a mixed-integer structure, involving both binary and continuous decision variables. In this broader setting, tree decompositions have been investigated extensively in the context of polynomial and mixed-integer optimization~\citep{wainwright2004treewidth, bienstock2018lp, madani2017finding, kojima2005sparsity, waki2006sums}, encompassing subclasses of problems closely related to ours. In particular,~\citet{bienstock2024solving} consider quadratic optimization problems in which sparsity across disjoint variable blocks is governed by binary indicator variables subject to coupling constraints. Their framework is applicable to Problem~\eqref{eq: MIQP} when the Hessian matrix $\mQ$ has bounded treewidth. Nonetheless, their approach yields only an FPTAS algorithm, and, to the best of our knowledge, no practical implementation of the proposed algorithm has been reported in the literature.

\subsection{Outline and overview}
The remainder of the paper is organized as follows. Section~\ref{sec: preliminaries} introduces the necessary preliminaries and background. Section~\ref{sec: motivating application} presents an application to time-series forecasting based on exponential smoothing with outlier detection. Section~\ref{sec: dp formulation} presents an (inefficient) DP approach for solving Problem~\eqref{eq: MIQP} by sequentially characterizing its parametric cost in exponential time. Although this DP formulation is not practical, it serves as the foundation for our efficient pruning strategy, developed in Section~\ref{sec: Parametric algorithm}. Section~\ref{sec::theoretical-analysis} provides theoretical guarantees on the correctness and runtime of the proposed algorithm. Section~\ref{sec: experiments synthetic} reports numerical results on synthetic instances, as well as real-world time-series data from the NAB dataset. Finally, Section~\ref{sec: conclusion} concludes with a summary of the main contributions.

\section{Preliminaries and background}\label{sec: preliminaries}
Matrices and vectors are denoted by bold uppercase and lowercase symbols (e.g., $\mQ$ and $\vc$), respectively, while scalars are denoted by unbolded symbols (e.g., $n$).
Given a matrix $\mQ \in \mathbb{R}^{n \times n}$ and index sets $\mathcal{I}, \mathcal{J} \subseteq \{1, \ldots, n\}$, we denote by $\mQ_{\mathcal{I},\mathcal{J}}$ the submatrix of $\mQ$ consisting of rows indexed by $\mathcal{I}$ and columns indexed by $\mathcal{J}$. Similarly, for a vector $\vc \in \mathbb{R}^n$, we write $\vc_{\mathcal{J}}$ for the subvector of $\vc$ restricted to the indices in $\mathcal{J}$.  
We use $\bbbone(x)$ to denote the indicator function on $\mathbb{R}$, which equals $0$ if $x = 0$ and $1$ otherwise. 
For a symmetric matrix $\mQ \in \mathbb{R}^{n \times n}$, let $\mu_{\min}(\mQ)$ and $\mu_{\max}(\mQ)$ denote its smallest and largest eigenvalues, respectively. The spectral condition number of $\mQ$ is defined as $\kappa_2(\mQ) := \mu_{\max}(\mQ) / \mu_{\min}(\mQ)$. Similarly, the condition number of $\mQ$ in the induced $\infty$-norm is defined as $\kappa_\infty(\mQ) = \|\mQ^{-1}\|_\infty\|\mQ\|_\infty$. When the argument is omitted, i.e., when we write $\mu_{\min}$, $\mu_{\max}$, $\kappa_2$, or $\kappa_\infty$, these quantities refer to the eigenvalues and condition numbers of the Hessian matrix $\mQ$.  
The entrywise $\ell_{1,1}$-norm of $\bm{S}$ is defined as $\norm{\bm{S}}_{1,1} := \sum_{i=1}^n \sum_{j=1}^n |\bm{S}_{ij}|$. We say that $\bm{S}$ is a \textit{banded matrix} with bandwidth $\bw \in \mathbb{Z}_{+}$ if $\bm{S}_{ij} = 0$ for all $|i-j| > \bw$.
We denote by $f^{\star}$ the optimal objective value of Problem~\eqref{eq: MIQP}, and by $(\vx^{\star}, \vz^{\star})$ its corresponding optimal solution.  

Given a symmetric matrix $\mQ\in R^{n\times n}$, its {\it support graph}, denoted by $\supp(\mQ)$, is a simple unweighted graph $\textsf{G}=(\setV_{\textsf{G}},\setE_{\textsf{G}})$ with vertex set $\setV_{\textsf{G}}=\{1,2,\ldots,n\}$, where an edge $(i,j)\in \setE_{\textsf{G}}$ exists if and only if $\mQ_{ij}\ne 0$ for $i\ne j$. For any two nodes $u,v\in\setV_{\textsf{G}}$, let $\dist(u,v)$ denote the length of the shortest path between $u$ and $v$ in $\supp(\mQ)$. More generally, for subset of nodes $\I,\J\subseteq \setV_{\textsf{G}}$, we define $\dist(\I,\J)=\min\limits_{i\in \I, j\in \J} \left\{\dist(i,j)\right\}$. 

\subsection{Tree decomposition and treewidth}\label{sec: tree-decomposition}

\begin{definition}[Tree decomposition]\label{def: tree decomposition}
	The tree decomposition of a graph $\textsf{G}=(\setV_{\textsf{G}},\setE_{\textsf{G}})$ is a pair $(\textsf{T},\B)$, where $\textsf{T}=(\setV_\textsf{T},\setE_\textsf{T})$ is a tree and $\B=\{\B_u : u\in \setV_\textsf{T}\}$ is a family of subsets (bags) of $\setV_{\textsf{G}}$, satisfying the following conditions:
	\begin{enumerate}
		\item $\setV_\textsf{G}=\bigcup_{u\in \setV_\textsf{T}}\B_u$. That is, every node of $\textsf{G}$ appears in at least one bag.
		\item For every edge $(i,j)\in \setE_\textsf{G}$, there exists a bag $\B_u$ containing both $i$ and $j$. 
		\item For every node $i \in \setV_\textsf{G}$, the set $\{u \in \setV_\textsf{T} : i \in \B_u\}$ induces a connected subtree of $\textsf{T}$. That is, bags containing $i$ form a connected subtree of $\textsf{T}$.
	\end{enumerate}
\end{definition}
The width of a tree decomposition is given by $\max\limits_{u\in \setV_\textsf{T}}\{|\B_u|-1\}$. A graph may admit many different tree decompositions. A trivial decomposition places all vertices of $\setV_\textsf{G}$ into a single bag, 
yielding width $|\setV_\textsf{G}|-1$. A more informative structural measure is the \textit{treewidth}, denoted by~$\omega$, defined as the minimum width among all tree decompositions of~$\textsf{G}$. Intuitively, treewidth measures how close a graph is to being a tree. For example, trees have treewidth 1. Series-parallel graphs have treewidth 2. On the other hand, fully dense graphs---being far from tree-like---have a maximum treewidth $|\setV_\textsf{G}|-1$. 

Next, we introduce the notion of a \textit{balanced tree decomposition}.

\begin{definition}[balanced tree decomposition]\label{def: balanced tree decomposition}
    Let $(\textsf{T},\B)$ be a tree decomposition with width $\omega$. We call $(\textsf{T},\B)$ a {balanced tree decomposition} if:
	\begin{enumerate}
		\item Each bag of $\textsf{T}$ contains exactly $\omega+1$ nodes.
		\item For every edge $(u,v) \in \setE_\textsf{T}$, the corresponding bags satisfy $|\B_u \cap \B_v| = \omega$.
	\end{enumerate}
\end{definition}

Finding an optimal tree decomposition with the smallest possible width—i.e., one that matches the treewidth $\omega$—is NP-hard~\citep{arnborg1987complexity}. Nevertheless, for graphs with bounded treewidth, both recognition and construction of optimal tree decompositions can be performed in linear time~\citep{bodlaender1996linear}. Beyond these theoretical results, practical heuristics such as \textit{fill-reducing} and \textit{nested dissection} algorithms often yield decompositions with near-optimal widths in practice~\citep{heggernes2006minimal}. Moreover, any tree decomposition with width~$\omega$ can be transformed into a balanced one of the same width by adding nodes to existing bags and, if necessary, inserting or removing bags. An illustrative example is shown in Figure~\ref{fig: balanced tree decomposition}. Balanced tree decompositions always exist, and, given a tree decomposition of width $\omega$, can be obtained in $\mathcal{O}(n)$ via the so-called ``nice'' tree decompositions; see~\citep[Section 2]{bodlaender1996efficient} for definitions and the construction.
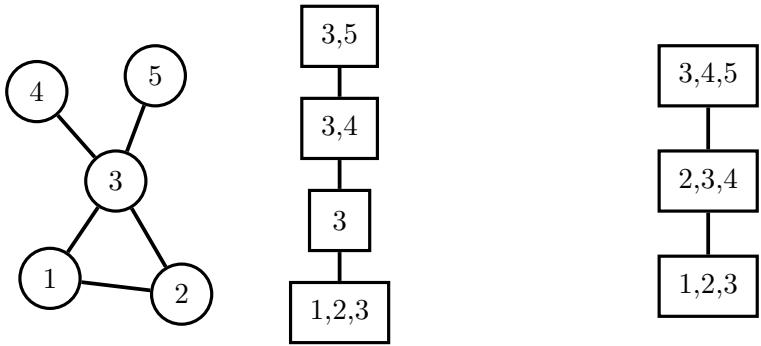
\begin{figure}[htbp]
	\centering
	\begin{tikzpicture}[scale=.7,auto=center,every node/.style={circle,draw=black!100, very thick, minimum size=8mm}, squarenode/.style={rectangle,draw=black!100, very thick, minimum size=8mm},roundnode/.style={rectangle, draw=red!60, fill=red!5, very thick, minimum size=7mm},yellownode/.style={circle, draw=yellow!100, fill=yellow!15, very thick, minimum size=7mm},greennode/.style={circle, draw=green!60, fill=green!15, very thick, minimum size=7mm},bluenode/.style={circle, draw=blue!60, fill=blue!20, very thick, minimum size=7mm}] 
		\node (a1) at (-7.5,0.15) {1};  
		\node (a2) at (-5,-0.15)  {2};
		\node (a3) at (-6.25,2)  {3};
		\node (a4) at (-7.75,3.7)  {4}; 
		\node (a5) at (-5.5,4)  {5};
		
		\draw[-, line width=0.5mm] (a1) -- (a2); %
		\draw[-, line width=0.5mm] (a2) -- (a3);
		\draw[-, line width=0.5mm] (a1) -- (a3);
		\draw[-, line width=0.5mm] (a4) -- (a3);
		\draw[-, line width=0.5mm] (a3) -- (a5);
		
		\node (b1) at (-2,-0.5) [rectangle,draw] {\text{ 1,2,3 }  };
		\node (b2) at (-2,1.25) [rectangle,draw] {\text{  3 }  };
		\node (b3) at (-2,3) [rectangle,draw] {\text{  3,4 }  };
		\node (b4) at (-2,4.75) [rectangle,draw] {\text{ 3,5 }  };
		
		\draw[-, line width=0.5mm] (b1) -- (b2);
		\draw[-, line width=0.5mm] (b3) -- (b2);
		\draw[-, line width=0.5mm] (b3) -- (b4);
		
		\node (c1) at (5,0) [rectangle,draw] {\text{ 1,2,3 }  };
		\node (c2) at (5,2) [rectangle,draw] {\text{ 2,3,4 }  };
		\node (c3) at (5,4) [rectangle,draw] {\text{ 3,4,5 }  };
		
		\draw[-, line width=0.5mm] (c1) -- (c2);
		\draw[-, line width=0.5mm] (c3) -- (c2);
	\end{tikzpicture} 
	\caption{A graph with a tree decomposition that violates Definition~\ref{def: balanced tree decomposition} (left), and its corresponding balanced tree decomposition (right).}
	\label{fig: balanced tree decomposition}
\end{figure}

Throughout this work, we assume that a balanced tree decomposition of $\supp(\mQ)$ with width $\omega$ is available. For example, for our proposed ESOC formulation, $\supp(\mQ)$ has treewidth 2, and the corresponding balanced tree decomposition can be readily obtained (see Figure~\ref{fig: ESO-support-graph}). 
We note that our proposed method does not require a tree decomposition of minimum width; any decomposition with reasonably small width suffices.

We next describe a procedure for labeling the nodes of $\supp(\mQ)$ induced by its balanced tree decomposition~$\textsf{T}$. As will become evident later, this labeling plays a crucial role in establishing the efficiency of the proposed algorithm. We begin by labeling the bags in~$\textsf{T}$.  
By the second property of the balanced tree decomposition, $(\textsf{T}, \mathcal{B})$ contains $n - \omega$ bags. For simplicity, we assume that the edges in~$\textsf{T}$ are oriented naturally toward a designated root. Under this convention, each bag may have multiple parents but at most one child. We denote by $\child_{\textsf{T}}(u)$ the child of bag~$u$ in~$\textsf{T}$, and by $\parent_{\textsf{T}}(u)$ the set of its parents.  
Labels are assigned to the bags according to a topological ordering: for every bag~$u$, we require $u < \child_{\textsf{T}}(u)$. Since $\textsf{T}$ contains $n - \omega$ bags, the root bag receives label $n - \omega$. As $\textsf{T}$ is acyclic, such a topological labeling always exists and can be computed in $\mathcal{O}(n)$ time and memory~\cite[Algorithm~3.8]{ahuja1988network}.  

We now proceed to label the nodes of $\supp(\mQ)$ based on its labeled balanced tree decomposition. For any non-root bag~$u$, by the definition of a balanced tree decomposition, the set $\mathcal{B}_u \setminus \mathcal{B}_{\child_{\textsf{T}}(u)}$ contains a unique node of $\supp(\mQ)$, which we label by~$u$. The nodes in the root bag of~$\textsf{T}$ are labeled arbitrarily with the remaining labels $\{n - \omega + 1, \ldots, n\}$. An illustration of this labeling scheme is provided in Figure~\ref{fig: Label for trees}. Since both $\mathcal{B}_u$ and $\mathcal{B}_{\child_{\textsf{T}}(u)}$ have size $\omega+1$, computing each set difference requires $\mathcal{O}(\omega^2)$ time. Repeating this operation for all bags yields a total labeling cost of $\mathcal{O}(n\omega^2)$ time. The memory required is $\mathcal{O}(n\omega)$, which corresponds to storing the labels for each bag.

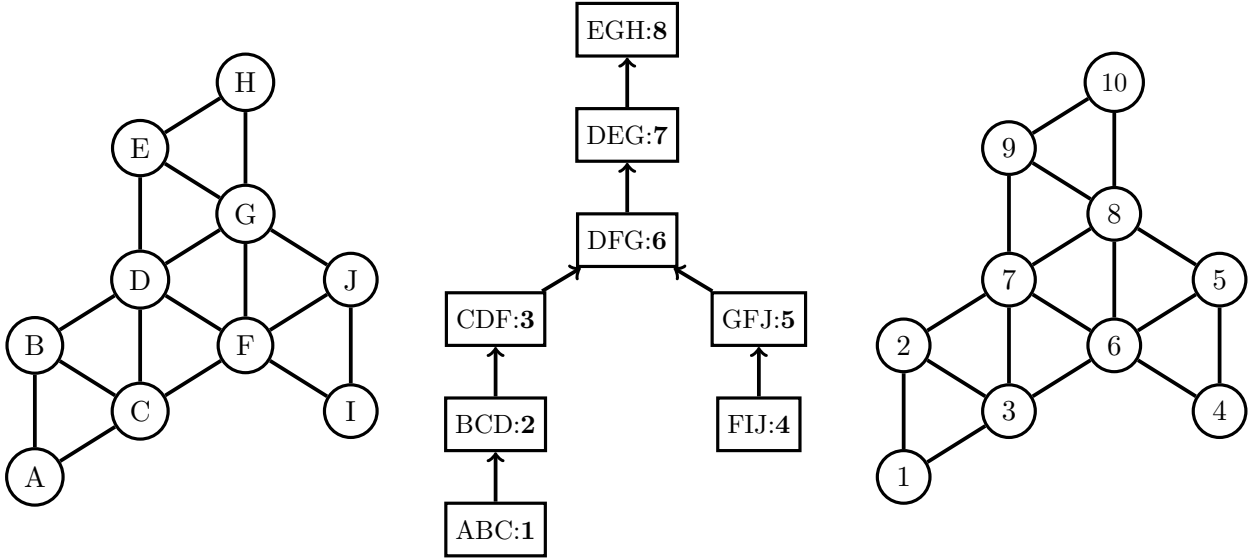
\begin{figure}[htbp]
	\centering
	\begin{tikzpicture}  
		[scale=.7,auto=center,every node/.style={circle,draw=black!100, very thick, minimum size=7mm}, snode/.style={circle,draw=white!0, thin, minimum size=8mm},roundnode/.style={rectangle, draw=red!60, fill=red!5, very thick, minimum size=7mm},yellownode/.style={circle, draw=yellow!100, fill=yellow!15, very thick, minimum size=7mm},greennode/.style={circle, draw=green!60, fill=green!15, very thick, minimum size=7mm},bluenode/.style={circle, draw=blue!60, fill=blue!20, very thick, minimum size=7mm}] 
		
		\node (b1) at (1.25,-1) [rectangle,draw] {\small ABC:\textbf{1}};
		\node (b2) at (1.25,1) [rectangle,draw] {\small BCD:\textbf{2} };
		\node (b3) at (1.25,3) [rectangle,draw] {\small CDF:\textbf{3}  };
		
		\node (b4) at (6.25,1) [rectangle,draw] {\small FIJ:\textbf{4}  };
		\node (b5) at (6.25,3) [rectangle,draw] {\small GFJ:\textbf{5}  };
		
		\node (b6) at (3.75,4.5) [rectangle,draw] {\small DFG:\textbf{6} };
		\node (b7) at (3.75,6.5) [rectangle,draw] {\small DEG:\textbf{7} };
		\node (b8) at (3.75,8.5) [rectangle,draw] {\small EGH:\textbf{8} };
		
		\node (a1) at (-7.5,0) {A};  
		\node (a2) at (-7.5,2.5)  {B}; 
		\node (a3) at (-5.5,1.25)  {C}; 
		\node (a4) at (-5.5,3.75)  {D}; 
		\node (a5) at (-5.5,6.25)  {E}; 
		
		\node (a6) at (-3.5,2.5)  {F};
		\node (a7) at (-3.5,5)  {G};
		\node (a8) at (-3.5,7.5)  {H};
		\node (a9) at (-1.5,1.25)  {I};
		\node (a10) at (-1.5,3.75)  {J};
		
		\draw[-, line width=0.5mm] (a1) -- (a2); 
		\draw[-, line width=0.5mm] (a2) -- (a3);
		\draw[-, line width=0.5mm] (a1) -- (a3);
		\draw[-, line width=0.5mm] (a4) -- (a3);
		\draw[-, line width=0.5mm] (a2) -- (a4);
		\draw[-, line width=0.5mm] (a3) -- (a6);
		\draw[-, line width=0.5mm] (a6) -- (a4);
		\draw[-, line width=0.5mm] (a7) -- (a6);
		\draw[-, line width=0.5mm] (a7) -- (a4);
		\draw[-, line width=0.5mm] (a5) -- (a4);
		\draw[-, line width=0.5mm] (a7) -- (a5);
		\draw[-, line width=0.5mm] (a7) -- (a8);
		\draw[-, line width=0.5mm] (a5) -- (a8);
		\draw[-, line width=0.5mm] (a7) -- (a10);
		\draw[-, line width=0.5mm] (a6) -- (a10);
		\draw[-, line width=0.5mm] (a6) -- (a9);
		\draw[-, line width=0.5mm] (a9) -- (a10);
		
		\node (c1) at (9,0) {1};  
		\node (c2) at (9,2.5)  {2}; 
		\node (c3) at (11,1.25)  {3}; 
		\node (c4) at (11,3.75)  {7}; 
		\node (c5) at (11,6.25)  {9}; 
		
		\node (c6) at (13,2.5)  {6};
		\node (c7) at (13,5)  {8};
		\node (c8) at (13,7.5)  {\small 10};
		\node (c9) at (15,1.25)  {4};
		\node (c10) at (15,3.75)  {5 };
		
		\draw[-, line width=0.5mm] (c1) -- (c2); 
		\draw[-, line width=0.5mm] (c2) -- (c3);
		\draw[-, line width=0.5mm] (c1) -- (c3);
		\draw[-, line width=0.5mm] (c4) -- (c3);
		\draw[-, line width=0.5mm] (c2) -- (c4);
		\draw[-, line width=0.5mm] (c3) -- (c6);
		\draw[-, line width=0.5mm] (c6) -- (c4);
		\draw[-, line width=0.5mm] (c7) -- (c6);
		\draw[-, line width=0.5mm] (c7) -- (c4);
		\draw[-, line width=0.5mm] (c5) -- (c4);
		\draw[-, line width=0.5mm] (c7) -- (c5);
		\draw[-, line width=0.5mm] (c7) -- (c8);
		\draw[-, line width=0.5mm] (c5) -- (c8);
		\draw[-, line width=0.5mm] (c7) -- (c10);
		\draw[-, line width=0.5mm] (c6) -- (c10);
		\draw[-, line width=0.5mm] (c6) -- (c9);
		\draw[-, line width=0.5mm] (c9) -- (c10);

		\draw[->, line width=0.5mm] (b1) -- (b2); 
		\draw[<-, line width=0.5mm] (b3) -- (b2); 
		\draw[<-, line width=0.5mm] (b5) -- (b4); 
		\draw[<-, line width=0.5mm] (b6) -- (b3);
		\draw[<-, line width=0.5mm] (b6) -- (b5); 
		\draw[<-, line width=0.5mm] (b7) -- (b6);
		\draw[<-, line width=0.5mm] (b8) -- (b7);

	\end{tikzpicture}
	\caption{The figure on the left illustrates a graph~$\textsf{G}$ with nodes labeled arbitrarily. The center panel shows a balanced tree decomposition of~$\textsf{G}$, where the bags are labeled according to the topological ordering (labels shown in bold). The panel on the right shows the resulting labeling of the nodes in~$\textsf{G}$ obtained by the described procedure.}
	\label{fig: Label for trees}
	\end{figure}
	
	For any node $u \in \supp(\mQ)$ with $u \le n - \omega$, we define $\supp_u(\mQ)$ as the subgraph of $\supp(\mQ)$ induced by the nodes contained in the largest subtree of $\textsf{T}$ comprising the bag $\mathcal{B}_u$ and all of its ancestors. For $u > n - \omega$, we set $\supp_u(\mQ) := \supp(\mQ)$. For any $u$, the treewidth of $\supp_u(\mQ)$ does not exceed~$\omega$~\citep[Lemma~12.4.1]{diestel2025graph}.  
As an example, for the graph shown in Figure~\ref{fig: Label for trees}, $\supp_5(\mQ)$ is the subgraph induced by the nodes in $\mathcal{B}_5 \cup \mathcal{B}_4 = \{5,6,8\} \cup \{4,5,6\}$, while $\supp_6(\mQ)$ is induced by the nodes in $\bigcup_{i=1}^6 \mathcal{B}_i$.  
For a node $u \in \supp(\mQ)$, we denote by $\mQ_{[u]}$ the principal submatrix of $\mQ$ indexed by the nodes of $\supp_u(\mQ)$; clearly, $\supp(\mQ_{[u]}) = \supp_u(\mQ)$. Similarly, $\vc_{[u]}$ and $\vlambda_{[u]}$ denote the subvectors of $\vc$ and $\vlambda$ restricted to these indices. We use $\mathcal{J}_u$ to denote the set of nodes in $\supp_u(\mQ)$ excluding those in $\mathcal{B}_u$, and let $n_u := |\mathcal{J}_u|$.  
Recall that the tree decomposition $\textsf{T}$ contains $n - \omega$ bags, with $\mathcal{B}_{n - \omega}$ designated as the root. With a slight abuse of notation, we define auxiliary sets $\mathcal{B}_u := \{u, u + 1, \ldots, n\}$ for $u \in \{n - \omega + 1, \ldots, n\}$. Although these sets are not part of the original tree decomposition, we refer to them as \textit{bags} for convenience. For $u > n - \omega$, we also define $\parent_{\textsf{T}}(u) := \{u - 1\}$.  
Finally, for any $u$, we define $\tau_u := \min\{\omega, n - u\}$, ensuring that $|\mathcal{B}_u| = \tau_u + 1$. When $u$ is clear from context, we simply write $\tau$.

    \subsection{The local parametric cost}

    Recall that $\mQ_{[u]}\in \mathbb{R}^{n_u+\tau+1\times n_u+\tau+1}$ is a principal submatrix of $\mQ$ indexed by $\J_u\cup \B_u$. Let $\pi_u: \J_u\cup \B_u\to \{1,\dots n_u+\tau+1\}$ be the canonical indexing map that assigns to each $i \in \J_u \cup \B_u$ its corresponding row/column position within the submatrix $\mQ_{[u]}$.
	For any $u\in \{1,\ldots,n\}$ and $\tau=\min\{\omega,n-u\}$, the local parametric cost, $f_u:\R^{\tau+1}\to \R$, is defined as 
	\begin{subequations}\label{eq: f_u tree}
		\begin{align}
			f_u(\valpha_{\B_u}):=\min_{\vx\in\R^{n_u+\tau+1},\vz\in\{0,1\}^{ n_u}}\quad& \dfrac{1}{2}\vx^\top \mQ_{[u]}\vx+{\vc^\top_{[u]} \vx}+{\vlambda^\top_{\J_u }} \vz\label{eq: f_u objective tree}\\ 
			\text{s.t.}\quad &\vx_i(1-\vz_i)=0\qquad i\in \pi_u(\J_u)\label{const: sparsity tree}\\
			& \vx_{\pi_u(\B_u)}=\valpha_{\B_u}. \label{const: x=alpha tree}
		\end{align}
	\end{subequations}
    We note that the subscript in $\valpha_{\B_u}$ is not mathematically necessary; however, as will be seen later, it helps streamline and clarify the subsequent arguments.
Intuitively, $f_u$ denotes the optimal value of the subproblem defined over $\supp_u(\mQ)$ after fixing the local continuous variables associated with $\mathcal{B}_u$ to $\valpha_{\B_u} \in \mathbb{R}^{\tau + 1}$. As will be explained, this local parametric cost establishes a structured connection between subproblems and enables the DP algorithm to efficiently propagate information through the tree decomposition.

Our next lemma establishes that the local parametric cost can be expressed as the pointwise minimum of at most $2^{n_u}$ strongly convex quadratic functions. The proof of this lemma is provided in Appendix~\ref{app:lemma1}.

\begin{lemma}\label{lemma: f_u as p_s}
	Fix any $u\in\{1,2,\ldots,n\}$. The local parametric cost  $f_u:\R^{\tau+1}\to\R$ can be written as 
	\begin{align}\label{eq: f_u in terms of p_u}
		f_u(\valpha_{\B_u})=\min_{\bs\in\{0,1\}^{n_u}} \left\{p_{u,\bs}(\valpha_{\B_u})\right\}
	\end{align}
	where, for every $\bs\in\{0,1\}^{n_u}$, $p_{u,\bs}(\valpha)$ is a strongly convex quadratic function.
	In particular, let $\J_{u,\bs} = \{i\in \J_u\mid \bs_i=1\}$. Then $p_{u,\bs}(\valpha_{\B_u})$ is given by

	\begin{align*}
			&p_{u,\bs}(\valpha_{\scriptscriptstyle \B_u})= \frac{1}{2}\valpha_{\scriptscriptstyle \B_u}\mA_{u,\bs}\valpha_{\scriptscriptstyle \B_u}+\vb_{u,\bs}^\top\valpha_{\scriptscriptstyle \B_u}+d_{u,\bs},\\
            &\text{where}\ \ \begin{cases}
			    \mA_{u,\bs}&=\mQ_{\scriptscriptstyle \B_u, \B_u}-\mQ_{\scriptscriptstyle \B_u,\J_{u,\bs}} \left(\mQ_{\scriptscriptstyle \J_{u,\bs},\J_{u,\bs}}\right)^{-1}\mQ_{\scriptscriptstyle \B_u,\J_{u,\bs}}^\top\\
		\vb_{u,\bs}&=\vc_{\scriptscriptstyle \B_u}-\vc_{\scriptscriptstyle \J_{u,\bs}}^T \left(\mQ_{\scriptscriptstyle \J_{u,\bs},\J_{u,\bs}}\right)^{-1}\mQ_{\scriptscriptstyle \B_u,\J_{u,\bs}}^{\top}\\
		d_{u,\bs}&= -\frac{1}{2}\vc_{\J_{u,\bs}}^\top (\mQ_{\scriptscriptstyle \J_{u,\bs},\J_{u,\bs}})^{-1}\vc_{\scriptscriptstyle \J_{u,\bs}}+\sum_{i\in \J_{u,\bs}} \vlambda_{i}.
			\end{cases}
	\end{align*}
	\end{lemma}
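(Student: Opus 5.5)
We prove Lemma~\ref{lemma: f_u as p_s} by splitting the minimization in~\eqref{eq: f_u tree} over the binary vector first. Exchanging the two minimizations gives
\[
f_u(\valpha_{\B_u})=\min_{\bs\in\{0,1\}^{n_u}}\ \min_{\vx}\Big\{\tfrac12\vx^\top\mQ_{[u]}\vx+\vc_{[u]}^\top\vx+\vlambda_{\J_u}^\top\bs\Big\},
\]
where the inner problem fixes $\vz=\bs$ and is subject to \eqref{const: sparsity tree}--\eqref{const: x=alpha tree}. For fixed $\bs$, the constraint $\vx_i(1-\bs_i)=0$ forces $\vx_i=0$ for $i\in\J_u\setminus\J_{u,\bs}$. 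The coordinates in $\B_u$ are fixed to $\valpha_{\B_u}$, and only $\vy:=\vx_{\J_{u,\bs}}$ remains free. We therefore define $p_{u,\bs}(\valpha_{\B_u})$ as the optimal value of this inner problem. With this definition,~\eqref{eq: f_u in terms of p_u} holds immediately, and the task reduces to computing $p_{u,\bs}$ in closed form and showing it is strongly convex.

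Dropping the zeroed coordinates, the inner objective is
\[
\tfrac12\valpha^\top\mQ_{\B_u,\B_u}\valpha+\valpha^\top\mQ_{\B_u,\J_{u,\bs}}\vy+\tfrac12\vy^\top\mQ_{\J_{u,\bs},\J_{u,\bs}}\vy+\vc_{\B_u}^\top\valpha+\vc_{\J_{u,\bs}}^\top\vy+\textstyle\sum_{i\in\J_{u,\bs}}\vlambda_i .
\]
The matrix $\mQ_{\J_{u,\bs},\J_{u,\bs}}$ is a principal submatrix of the positive definite $\mQ$, so it is positive definite and invertible. The problem in $\vy$ is therefore an unconstrained strictly convex quadratic. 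Setting the gradient to zero gives
\[
\vy^\star=-(\mQ_{\J_{u,\bs},\J_{u,\bs}})^{-1}\big(\mQ_{\B_u,\J_{u,\bs}}^\top\valpha+\vc_{\J_{u,\bs}}\big).
\]
Substituting $\vy^\star$ and collecting terms produces the Schur complement $\mA_{u,\bs}$, the linear coefficient $\vb_{u,\bs}$ (using symmetry of the inverse to transpose the cross term), and the constant $d_{u,\bs}$, matching the displayed formulas. When $\J_{u,\bs}=\emptyset$, the empty-block convention gives $\mA_{u,\bs}=\mQ_{\B_u,\B_u}$, $\vb_{u,\bs}=\vc_{\B_u}$ and $d_{u,\bs}=0$, which is consistent.

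The only genuinely non-mechanical step is strong convexity, meaning $\mA_{u,\bs}\succ 0$. We would argue this through the Schur complement lemma. Take the principal submatrix of $\mQ$ indexed by $\J_{u,\bs}\cup\B_u$; it is positive definite. Its Schur complement with respect to the invertible block $\mQ_{\J_{u,\bs},\J_{u,\bs}}$ is exactly $\mA_{u,\bs}$, and the Schur complement of a positive definite matrix is positive definite.

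A more quantitative alternative is variational. Since $\tfrac12\valpha^\top\mA_{u,\bs}\valpha=\min_{\vy}\tfrac12(\valpha,\vy)^\top\mQ_{\J_{u,\bs}\cup\B_u}(\valpha,\vy)$, and this is at least $\tfrac12\mu_{\min}\|\valpha\|^2$, we obtain $\mu_{\min}(\mA_{u,\bs})\ge\mu_{\min}(\mQ)$. This bound may be useful later in the paper.

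The only care needed is bookkeeping of indices through $\pi_u$, and noting that the order of minimization over $\vx$ and $\bs$ may be swapped freely.
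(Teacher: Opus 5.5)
Your proposal is correct and follows essentially the same route as the paper: fix $\vz=\bs$ to define $p_{u,\bs}$ as the inner minimization, eliminate the free coordinates $\vx_{\J_{u,\bs}}$ through the first-order (KKT) conditions of the resulting unconstrained strictly convex quadratic, and obtain strong convexity because $\mA_{u,\bs}$ is the Schur complement of the positive definite principal submatrix indexed by $\J_{u,\bs}\cup\B_u$. Your variational bound $\mu_{\min}(\mA_{u,\bs})\ge\mu_{\min}(\mQ)$ is a valid, slightly more quantitative addition that the paper does not state.
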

	
    Since $f_u$ can be expressed as the minimum of a collection of quadratic functions, it can, in principle, be stored in memory by saving the coefficients $(\mA_{u,\bs}, \vb_{u,\bs}, d_{u,\bs})$ corresponding to each quadratic piece $p_{u,\bs}$. Storing the coefficients of each $p_{u,\bs}$ requires $\mathcal{O}(\omega^2)$ memory. However, because the number of sparsity patterns~$\bs$ grows exponentially with the number of nodes~$n_u$ in $\supp_u(\mQ)$, the total memory required to store~$f_u$ also scales exponentially. Evidently, such direct storage is impractical.  

When $\supp(\mQ)$ is a tree (i.e., $\omega = 1$), \citet{bhathena2025parametric} demonstrated that the parametric cost~$f_u$ can be represented as the minimum of only $\mathcal{O}(n_u)$ quadratic pieces, resulting in a linear memory requirement. For the general case~$\omega > 1$, however, the existence of a compact representation of~$f_u$ remains an open question. In this work, we address this challenge by leveraging additional structural properties of the problem, such as its volume growth, which we describe next.

    \subsection{Volume growth of a graph}
    For any node $u$ in $\supp(\mQ)$, define 
$\mathcal{V}_{u,m} := \{\, i \in \mathcal{J}_u \mid \dist(i, \mathcal{B}_u) \le m \,\}$
as the set of nodes in $\mathcal{J}_u$ lying within distance~$m$ of bag $\mathcal{B}_u$. 
This set is referred to as the \textit{$m$-neighborhood} of $\mathcal{B}_u$ within the induced subgraph $\supp_u(\mQ)$. Let 
$\Delta_m := \max_{u \in \{1, \ldots, n\}} |\mathcal{V}_{u,m}|$.
The \textit{volume growth function} of $\supp(\mQ)$ characterizes how rapidly $\Delta_m$ increases with~$m$.

	\begin{assumption}[Polynomial volume growth]\label{assumption: poly growth of m-deg}
		There exist constants $\gamma ,\delta > 0$ such that $$\Delta_m \le \delta\, m^\gamma \qquad \text{ for all } m \ge 1.$$
	\end{assumption}
	
	Assumption~\ref{assumption: poly growth of m-deg} is closely related to the notion of uniform polynomial volume growth~\citep{ebrahimnejad2021planar,kontogeorgiou2022random}, although the precise definitions may vary slightly. 
    This assumption holds for many well-studied graph families. Examples include trees with $\mathcal{O}(1)$ leaves, cycles, grid graphs, and support graphs of banded matrices. In contrast, trees with $\mathcal{O}(n)$ leaves may not satisfy this assumption. Notable examples are complete binary trees, for which $\Delta_m = \mathcal{O}(2^m)$, and star graphs, where $\Delta_1 = n - 2$.

\section{Application: Exponential smoothing with outlier correction}\label{sec: motivating application}

To motivate our methodological framework for convex quadratic optimization over structured graphs, we now introduce a practical application in time-series analysis. Specifically, 
 we adopt a unified perspective that integrates forecasting and outlier detection. Building on classical exponential smoothing, a simple and flexible method for time-series forecasting, we extend it with an MIQP-based formulation to handle outliers effectively. The next subsection reviews exponential smoothing and discusses its limitations, motivating our proposed extension.

\subsection{Exponential smoothing}\label{sec: exponential smoothing}
Exponential smoothing encompasses a family of methods widely used in operations research for time series with varying characteristics such as level, trend, and seasonality.  
By assigning exponentially decreasing weights to past observations, these methods smooth short-term fluctuations while preserving long-term structure, making them valuable for forecasting and decision-making in dynamic environments. Applications span retail and inventory management \citep{ratnasari2019demand,billah2006exponential,gardner2006exponential}, market analysis and policy design~\citep{kahraman2023comparison,tran2020comprehensive}, and quality control \citep{taghezouit2021simple}; see \citep{hyndman2008forecasting} for additional examples.

Among the various exponential smoothing methods, \textit{Simple Exponential Smoothing} (SES)—also known as single exponential smoothing—provides a foundational formulation. Formally, let $\{\vy_t\}_{t=1}^T$ denote a univariate time series signal. The smoothed sequence $\{\vx_t\}_{t=1}^T$ is defined recursively as: 
\begin{align}\label{eq: SES}\tag{SES}
	\vx_t = \beta \vy_t + (1 - \beta) \vx_{t-1} \quad \text{for } t = 2, \dots, T;
\end{align} 
with initialization $\vx_1 = \vy_1$. The smoothing parameter $\beta \in (0, 1) $ controls the weight assigned to the most recent observation. Larger values of $\beta$ make the method more responsive to changes but more sensitive to noise, while smaller values produce smoother outputs at the cost of responsiveness.

To illustrate the limitations of SES, we consider a time series of network traffic volume to a cloud server, measured in bytes at five-minute intervals. This dataset was collected by Amazon CloudWatch and is publicly available via the Numenta Anomaly Benchmark (NAB) \citep{ahmad2015nab}.
Figure~\ref{fig: SES failure revisited} (top row) illustrates SES applied to this time series with smoothing factors $\beta=0.5,0.2,0.05$. The circled points indicate outliers labeled by NAB. With larger values of $\beta$, the smoothed series more closely follows the raw data, but fails to exclude the outliers in the process. Conversely, small values of $\beta$ dampen the influence of the outliers while introducing lag relative to the underlying series. This behavior highlights a fundamental limitation of SES: it cannot simultaneously suppress outliers and adapt quickly to changes in the underlying signal. Motivated by this inherent trade-off in SES, we formulate a robust extension, which we call \textit{exponential smoothing with outlier correction }(ESOC). We focus primarily on SES, but note that our formulation extends naturally to other variants, including double- and triple-exponential smoothing.
\begin{figure}[ht]
		\centering
        \includegraphics[width=0.33\linewidth]{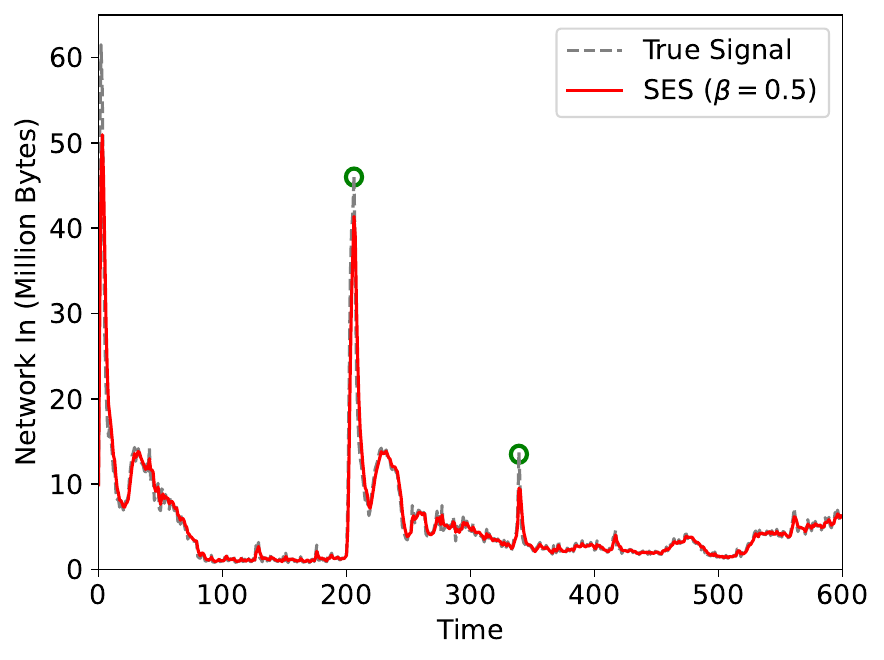}
		\includegraphics[width=0.33\linewidth]{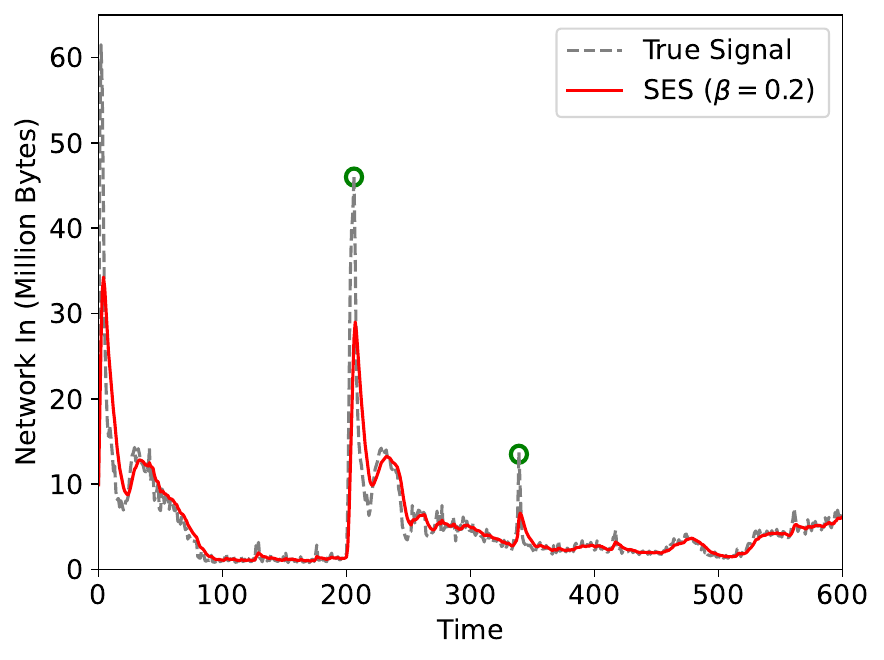}
        \includegraphics[width=0.33\linewidth]{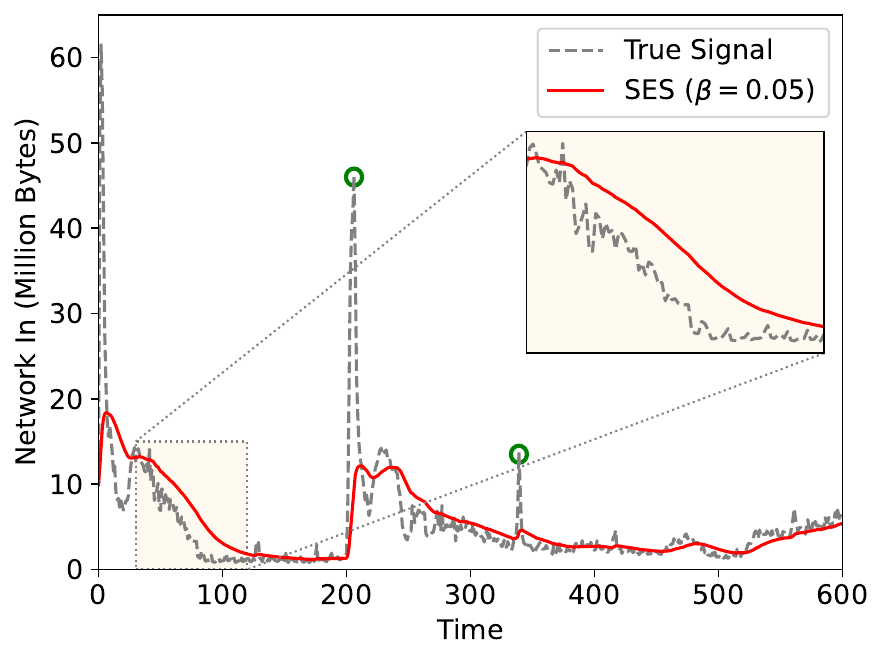}
		\includegraphics[width=0.33\linewidth]{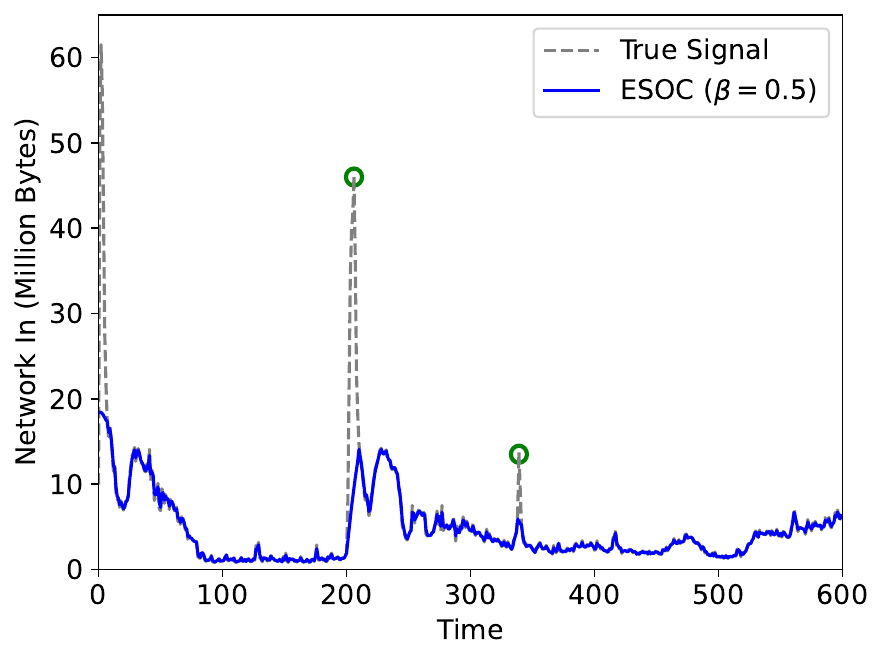}
		\includegraphics[width=0.33\linewidth]{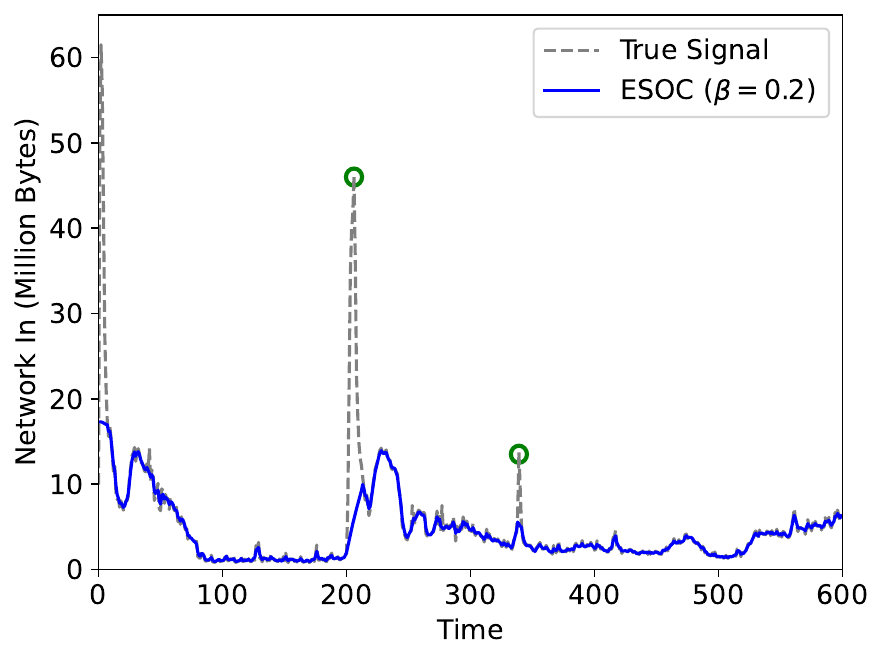}
		\includegraphics[width=0.33\linewidth]{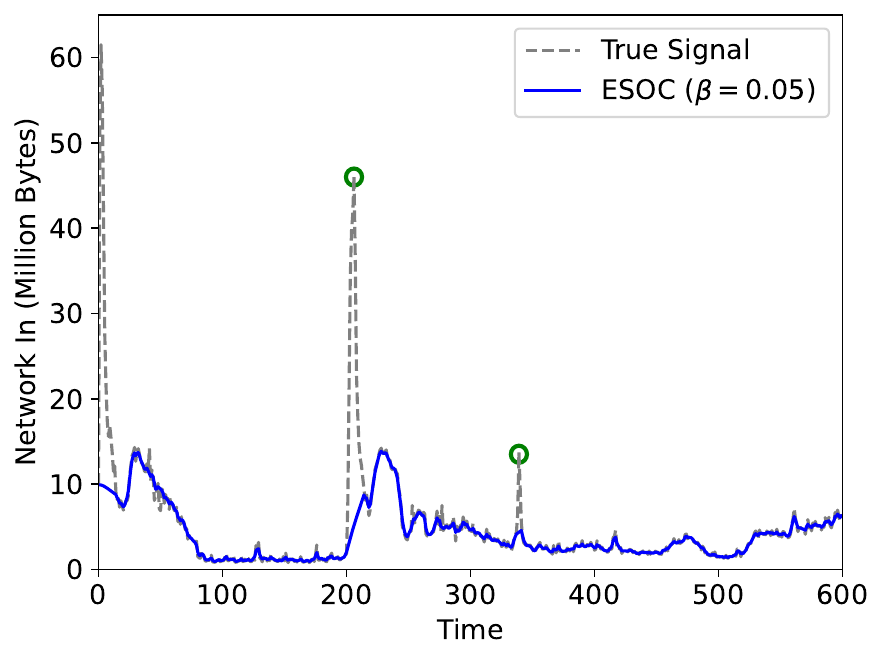}

		\caption{ \textbf{Top row:} SES applied to a signal containing anomalies from the Numenta Anomaly Benchmark (NAB), with smoothing parameters $\beta = 0.5$, $0.2$, and $0.05$ (from left to right). Outliers identified by the NAB ground-truth labels are highlighted with circles. Lower values of $\beta$ reduce sensitivity to outliers but induce a noticeable lag relative to the underlying signal. 
\textbf{Bottom row:} ESOC effectively detects and removes outliers across all values of $\beta$, while preserving the alignment with the true signal and avoiding lag.}
		\label{fig: SES failure revisited}
	\end{figure}

\subsection{Exponential smoothing with outlier correction}\label{sec: ESO}

We now introduce an MIQP extension of SES that explicitly accounts for outliers. In this framework, the vector $\vx \in \mathbb{R}^T$ denotes the smoothed signal, and $\vo \in \mathbb{R}^T$ is a sparse vector capturing outliers. The associated optimization problem is 
\begin{align*}
	\min_{\vx,\vo\in\R^T, \vz\in \{0,1\}^T}\ &\sum_{t=1}^T\left(\vy_t-\vx_t-\vo_t \right)^2+\sum_{t=1}^T\vlambda_t \vz_t\\
	\text{s.t.}\quad &\vx_{t}=\beta(\vy_t-\vo_t)+(1-\beta)\vx_{t-1}&\text{for } t=2,\ldots,T\\
    &\vo_t(1-\vz_t)=0&\text{for } t=1,\ldots,T,
\end{align*}
where $\vz \in \{0,1\}^{T}$ captures the sparsity pattern of $\vo \in \mathbb{R}^T$, and is controlled by a nonnegative regularization vector $\vlambda \in \mathbb{R}^T$. When $\vz_t = 0$, the sample $\vy_t$ is treated as noise-free, yielding the standard SES recursion. When $\vz_t = 1$, the variable $\vo_t$ is allowed to take nonzero values, thereby allowing for correction of the noisy observation $\vy_t$.

Rather than enforcing the exponential smoothing dynamic as a hard constraint, we adopt a relaxed formulation that penalizes deviations from this constraint in the objective function, as follows:
\begin{align}\label{eq: ESO}\tag{ESOC}
		\min_{\vx,\vo\in\R^T}&\ \sum_{t=1}^T\left(\vy_t-\vx_t-\vo_t \right)^2+\sum_{t=1}^T\vlambda_t \vz_t+ \mu_1 \sum_{t=2}^T\left(\beta(\vy_t-\vo_t)+(1-\beta) \vx_{t-1} - \vx_{t}\right)^2+\mu_2 \|\vo\|^2_2\\
    \text{s.t.}\quad &\vo_t(1-\vz_t)=0\qquad \qquad\text{for } t=1,\ldots,T,\nonumber
\end{align}
where $\mu_1 \ge 0$ penalizes deviations from the exponential smoothing dynamics. The additional term $\mu_2 \|\vo\|_2^2$, with $\mu_2 \ge 0$, ensures that the Hessian of the objective is positive definite. We note that although this regularization may induce slight shrinkage in $\vo$, $\mu_2$ is set to a very small value in practice, rendering the effect negligible while providing substantial numerical stability benefits. The resulting relaxed formulation conforms to the structure of Problem~\eqref{eq: MIQP}, with a Hessian matrix whose sparsity pattern has treewidth equal to~2 and exhibits linear volume growth with $\Delta_m\leq 3m$ (see Figure~\ref{fig: ESO-support-graph}). Consequently, \eqref{eq: ESO} falls within the class of problems that can be solved efficiently using the parametric algorithm developed in this paper. 

\begin{figure}[htbp]
\centering
\includegraphics[width=0.95\linewidth]{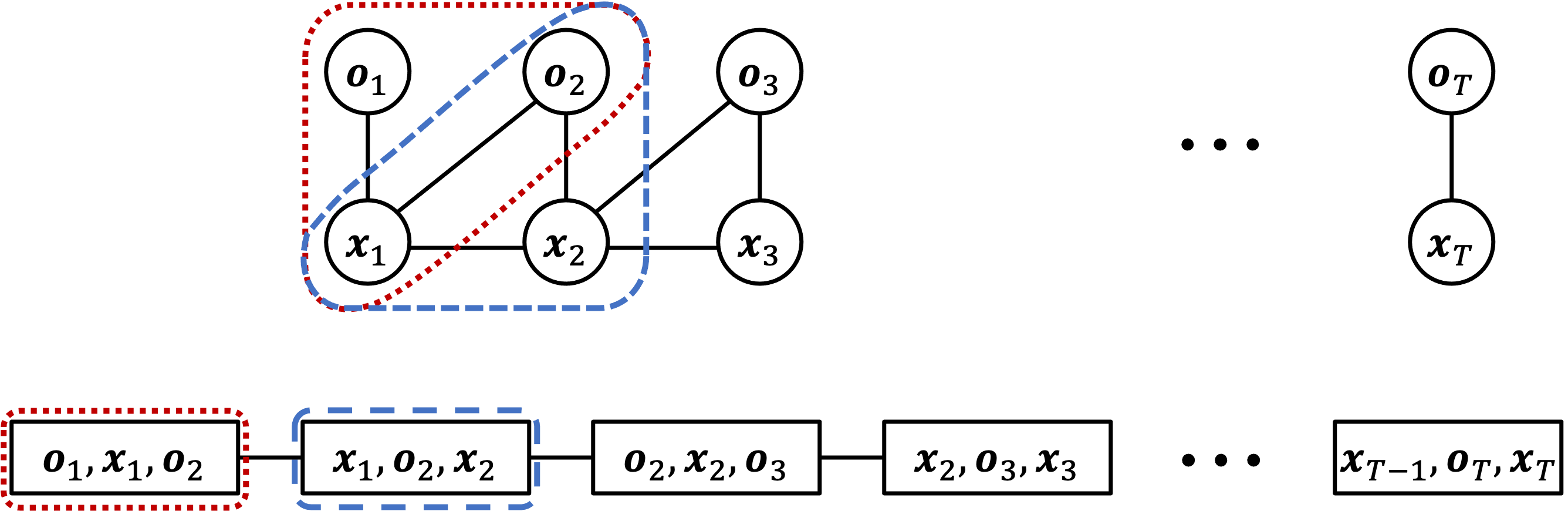}
\caption{Hessian support graph (top) and corresponding tree decomposition (bottom) for ESOC model.}
\label{fig: ESO-support-graph}
\end{figure}

    We conclude this section with preliminary empirical results; a detailed experiment study is provided in Section~\ref{sec: experiments real world}.
    Figure~\ref{fig: SES failure revisited} (second row) illustrates the solution of~\ref{eq: ESO} obtained via our parametric algorithm across different smoothing parameters $\beta$. As discussed earlier,~\ref{eq: SES} exhibits a trade-off between outlier sensitivity and responsiveness. In contrast,~\ref{eq: ESO} effectively suppresses outliers across all $\beta$ values without sacrificing responsiveness.  To quantify this comparison, we report the corresponding forecast mean squared error (MSE) values for both methods. For $\beta \in \{0.5, 0.2, 0.05\}$,~\ref{eq: SES} yields MSE values of $16.89, 9.87$, and $ 25.70$; the corresponding values under~\ref{eq: ESO} are $0.32, 0.46,$ and $0.22$. In each case,~\ref{eq: ESO} attains lower MSE, corresponding to improved predictive accuracy.

    We next evaluate the computational scalability of~\ref{eq: ESO}. Specifically, we apply our model to five real-world time series from the NAB dataset, including AWS CloudWatch metrics and traffic speed data, using $\beta \in \{0.05, 0.2, 0.5\}$. Each resulting optimization problem is solved using both \textsc{Gurobi}, a state-of-the-art commercial MIQP solver, and our proposed algorithm. To assess scalability, we vary the problem size $T$ by truncating each time series and record the corresponding solution times. For each $T$, we solve 15 instances (five signals and three $\beta$ values each). Figure~\ref{fig: intro runtime} reports these runtime comparisons: our algorithm solves all instances to optimality and achieves an average runtime of 54 seconds at $T = 1000$, whereas \textsc{Gurobi}’s runtime increases drastically with $T$ and exceeds the one-hour limit beyond $T = 500$. 

        \begin{figure}[htbp]
    	\centering
    	\includegraphics[width=0.7\linewidth]{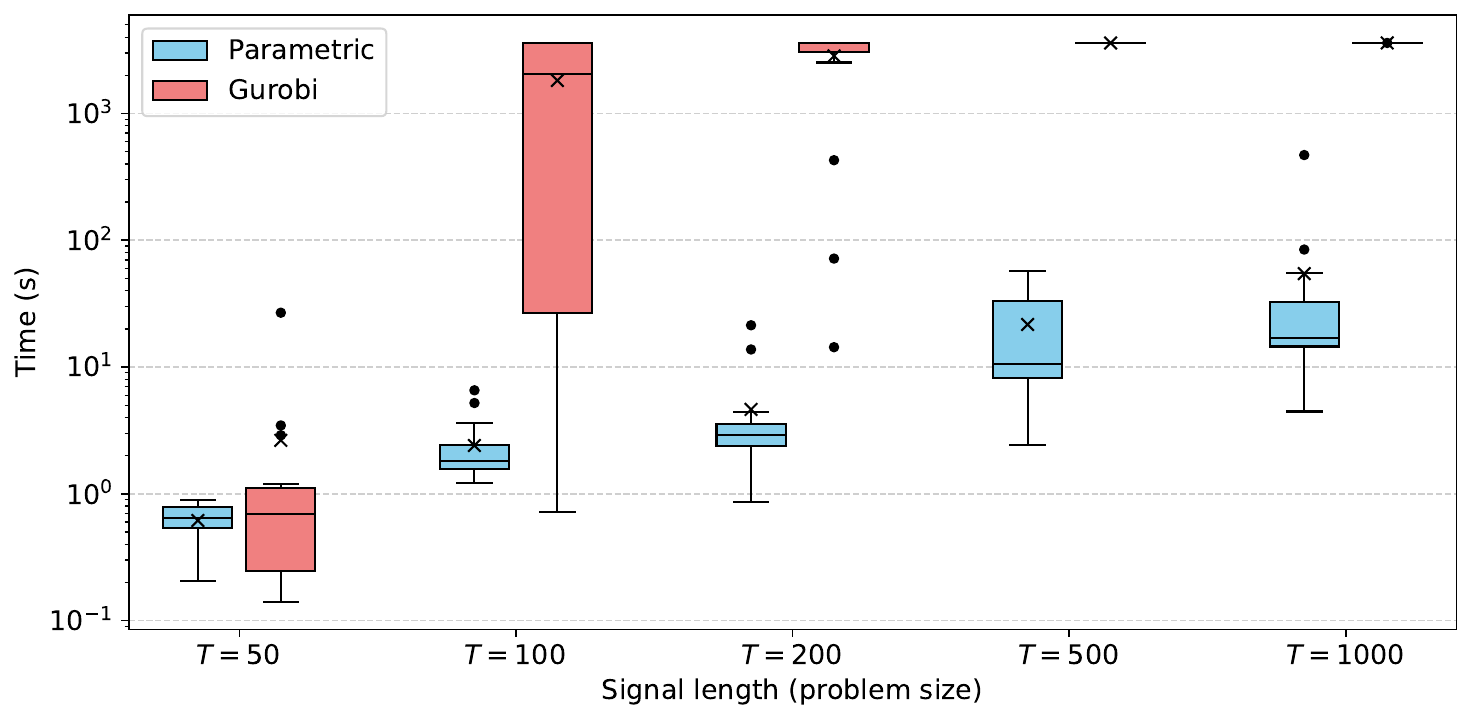}
    	\caption{Runtime comparison between the proposed algorithm and \textsc{Gurobi} on five real-world time series from the NAB dataset, with smoothing parameters. Signal lengths are varied by truncation. For each signal, smoothing parameters are set to $\beta\in\{0.05,0.2,0.5\}$. “×” denotes the mean runtime.  \textsc{Gurobi} runs are terminated after one hour, resulting in flat values at larger signal lengths.}
        \label{fig: intro runtime}
    \end{figure}
    
\section{Dynamic programming via local parametric costs}\label{sec: dp formulation}

Recall the definition of the local parametric cost $f_u$ in~\eqref{eq: f_u tree}. 
A key observation is that once the local variables $\vx_{\pi_u(\B_u)}$ are fixed to $\valpha_{\B_u}$, the subproblem defined over $\supp_u(\mQ)$ decomposes into independent components, each associated with the variables in the subtree of $\textsf{T}$ rooted at a parent of $u$. This decomposition follows directly from the \textit{running intersection property} of the tree decomposition, which states that if a node $v$ appears in two bags $\B_i$ and $\B_j$, then it must also appear in every bag along the path between $\B_i$ and $\B_j$ in $\textsf{T}$. As a result, the nodes in the subgraph $\supp_u(\mQ)$ become connected to the rest of the graph only through the nodes in the bag $\B_u$, and fixing $\vx_{\pi_u(\B_u)}$ removes all remaining couplings. 
The following lemma formalizes this observation and provides an explicit recursive representation of the local parametric cost $f_u$ in terms of the corresponding quantities associated with the parent nodes of $u$ in the tree decomposition. The proof of this lemma is provided in Appendix~\ref{app: fu recursion}.

\begin{lemma}\label{lemma: fu recursion}
    For any node $u$, the local parametric cost $f_u:\R^{\tau+1}\to\R$ satisfies
    \begin{align}\label{eq: f from g}
	f_u(\valpha_{\B_u})&=h_u(\valpha_{\B_u})+\sum_{v \in \parent_{\textsf{T}}(u)} \left(g_{v}(\valpha_{\B_v\backslash v})-\phi_{v}(\valpha_{\B_v\backslash v})\right),
\end{align}
where the functions $h_u:\R^{\tau+1}\to\R$ and $g_v,\phi_v:\R^{\tau}\to\R$ are defined as follows
\begin{subequations}
    \begin{align}
        h_u(\valpha_{\B_u})&:=\frac{1}{2}\valpha_{\B_u}^\top \mQ_{\B_u,\B_u}\valpha_{\B_u}+\vc_{\B_u}^\top \valpha_{\B_u},\label{eq::h}\\[0.4em]
        g_{v}(\valpha_{\B_v\backslash v})&:=\min\limits_{\vx_v\in\R} \left\{f_{v}(\vx_v,\valpha_{\B_v\backslash v})+\vlambda_v\bbbone(\vx_v)\right\},\label{eq: g_u}\\[0.1em]
        \phi_{v}(\valpha_{\B_v\backslash v})&:= \frac{1}{2}\valpha_{\B_v\backslash v}^\top \mQ_{\B_v\backslash v,\B_v\backslash v}\valpha_{\B_v\backslash v}+\vc^\top_{\valpha_{\B_v\backslash v}}\valpha_{\valpha_{\B_v\backslash v}}.\label{eq: phi_u}
    \end{align}
\end{subequations}
\end{lemma}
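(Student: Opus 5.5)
The plan is to prove \eqref{eq: f from g} by showing that the vertex set of $\supp_u(\mQ)$ partitions as $\B_u \cup \bigcup_{v\in\parent_{\textsf{T}}(u)} \J_v\cup\{v\}$, and that the objective of \eqref{eq: f_u tree} splits accordingly. I would first fix the combinatorics. For each parent $v$, the balanced property gives $\B_v\setminus\{v\}\subseteq \B_u$; indeed $\B_v\setminus \B_u=\{v\}$ by the labeling rule. The subtree of $\textsf{T}$ hanging at $v$ (bag $v$ and its ancestors) yields $\supp_v(\mQ)$ with vertex set $\J_v\cup\B_v$. I would then prove three facts using the running intersection property.
\begin{enumerate}
\item $\J_u=\bigcup_{v}(\J_v\cup\{v\})$. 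Each node of $\supp_u$ outside $\B_u$ lies in some bag of a parent subtree; conversely $v\notin\B_u$ and $\J_v\cap \B_u=\emptyset$, since a node of $\J_v$ that lies in $\B_u$ must lie in $\B_v$ by running intersection.
\item The sets $\J_v\cup\{v\}$ for distinct parents are disjoint. A node common to two parent subtrees must lie in $\B_u$ by running intersection, so it cannot be in either of the two sets.
\item There is no edge of $\supp(\mQ)$ between $\J_v\cup\{v\}$ and $\J_{v'}\cup\{v'\}$ for $v\neq v'$. Such an edge needs a common bag, which would have to be in one subtree, forcing an endpoint into both subtrees and hence into $\B_u$. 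Likewise, edges from $\J_v\cup\{v\}$ into $\B_u$ only go to $\B_v\setminus v$: such an edge lies in a bag of $v$'s subtree, and its $\B_u$-endpoint then lies in $\B_v$, hence in $\B_v\setminus v$.
\end{enumerate}

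With this partition, I expand $\frac12\vx^\top\mQ_{[u]}\vx+\vc_{[u]}^\top\vx$ into four groups of terms: those purely on $\B_u$, which give $h_u(\valpha_{\B_u})$; for each parent $v$, terms within $\J_v\cup\{v\}$; cross terms between $\J_v\cup\{v\}$ and $\B_v\setminus v$; and nothing else. For each $v$, the block objective on $\J_v\cup\B_v$, with $\B_v\setminus v$ fixed to $\valpha_{\B_v\setminus v}$, equals exactly the objective of the $f_v$ problem. That objective, however, also contains the pure $\B_v\setminus v$ quadratic and linear terms, which are already counted in $h_u$; this overlap is exactly $\phi_v(\valpha_{\B_v\setminus v})$ and must be subtracted. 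The $\vlambda$ terms split the same way: $\vlambda_{\J_u}^\top\vz=\sum_v(\vlambda_{\J_v}^\top\vz_{\J_v}+\vlambda_v\vz_v)$.

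Since the remaining variables $(\vx,\vz)$ on different blocks $\J_v\cup\{v\}$ are decoupled once $\valpha_{\B_u}$ is fixed, the minimum of the sum is the sum of the minima. For block $v$, I minimize first over $\vx_{\J_v},\vz_{\J_v}$ with $\vx_v$ held fixed, which gives $f_v(\vx_v,\valpha_{\B_v\setminus v})$. I then minimize over $(\vx_v,\vz_v)$ with $\vx_v(1-\vz_v)=0$, which gives $\min_{\vx_v}\{f_v+\vlambda_v\bbbone(\vx_v)\}=g_v$, because $\vlambda_v>0$ makes $\vz_v=\bbbone(\vx_v)$ optimal. Combining everything yields \eqref{eq: f from g}.

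The auxiliary bags for $u>n-\omega$ are handled the same way: there the single parent is $u-1$, and $\B_{u-1}\setminus\{u-1\}=\B_u$. I expect the main obstacle to be the bookkeeping in the combinatorial step. In particular, I need to check that $\B_v\setminus v\subseteq\B_u$ holds with the paper's labeling, including the root and auxiliary cases, and that no pure $\B_u$ term is double-counted beyond the $\phi_v$ correction. This second check requires that the diagonal and off-diagonal entries among $\B_v\setminus v$ appear in the $f_v$ objective exactly once, which is true because $\mQ_{[v]}$ includes the block $\mQ_{\B_v,\B_v}$.
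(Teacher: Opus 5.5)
Your proposal is correct and follows essentially the same route as the paper. The paper also uses the running intersection property to show that the sets $\J_v\cup\{v\}$ for distinct parents are disjoint and share no edges. It then splits the objective of $f_u$ into $h_u$ plus one block per parent, and identifies each block's minimum with $g_v-\phi_v$ after writing $g_v$ out explicitly. Your two-stage minimization (first over $\J_v$ to recover $f_v$, then over $\vx_v$ to obtain $g_v$) is the same bookkeeping. You are slightly more explicit than the paper in justifying that there are no edges from $\J_v\cup\{v\}$ into $\B_u\setminus\B_v$, and in checking the auxiliary bags $u>n-\omega$.
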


The above equation can be interpreted as the \textit{$u$-stage DP update}~\cite[Chapter~2]{bertsekas2012dynamic}, which expresses the local parametric cost (or the so-called \textit{cost-to-go}) of bag $\B_u$ in terms of the local parametric costs of its parent bags $\{f_v: v\in \parent_{\textsf{T}}(u)\}$. The intuition behind~\eqref{eq: f from g} is natural: to characterize the local parametric cost~$f_u$, three adjustments are required based on the local costs at its parent bags. First, the variables in~$\mathcal{B}_v$ that do not appear in~$\mathcal{B}_u$ must be eliminated, which is achieved through the minimization over~$\valpha_v$ in the definition of the function~$g_v$. Second, the costs associated with variables in~$\B_u$ must be added; this contribution is captured by the function~$h_u$. Finally, the cost associated with the remaining variables in~$\B_v\backslash v$ is removed through~$\phi_v$ to prevent double counting across parent bags.

This naturally leads to a DP algorithm that traverses the tree decomposition from leaves to the root and, at each bag $\B_u$, recursively computes the parametric cost $f_u$ and the function $g_u$. Since $f_u$ is defined as the minimum of convex quadratic functions, it follows that $g_u$ is also representable as a minimum of quadratic functions, though not necessarily convex. Moreover, if $f_u$ consists of $N$ quadratic pieces, then $g_u$ contains at most $2N$ quadratic pieces. This doubling arises from the indicator function $\bbbone(\vx_v)$ in the definition of $g_u$, which splits each quadratic piece of $f_u$ into two distinct quadratic functions depending on the value of $\bbbone(\vx_v)$. Moreover, the labeling scheme guarantees that each $g_u$ is computed before any parametric cost that depends on it. 
Upon computing $f_n$, the optimal cost can be obtained as 
$$f^\star=\min_{\valpha_n\in\R}\{f_n(\valpha_n)+\vlambda_n\bbbone(\valpha_n)\}.$$
With the optimal cost at hand, the remaining local parameters can be recovered by traversing from the root bag back to the leaf bags. This DP algorithm is stated in Algorithm~\ref{alg: direct algo}. The following proposition establishes the correctness and computational complexity of this DP algorithm. Note that the algorithm is exponential in the number of binary decision variables $n$, thus may be impractical as stated.

\begin{algorithm}[ht]
    \caption{DP for Problem~\eqref{eq: MIQP}}\label{alg: direct algo}
    \textbf{Input:} Problem~\eqref{eq: MIQP} and a balanced tree decomposition~$\textsf{T}$ of~$\supp(\mQ)$ with width~$\omega$;\\
\textbf{Output:} The optimal cost~$f^\star$ and an optimal solution~$(\vx^\star, \vz^\star)$ of Problem~\eqref{eq: MIQP};
    \begin{algorithmic}[1]
        \State Label the nodes of~$\supp(\mQ)$ according to the scheme described in Section~\ref{sec: tree-decomposition};\label{line::labeling_step:oracle}
        \State Set $f_1(\valpha_{\B_1})=\frac{1}{2}\valpha_{\B_1}^\top \mQ_{\B_1,\B_1}\valpha_{\B_1}+\valpha_{\B_1}^\top \vc_{{\B_1}}.$\label{line::setup-f1:oracle}
        \For{$u=1,\dots, n-1$}\label{line:for-begin:oracle}
        \State Calculate $g_{u}$ from $f_{u}$ via  Equation~\eqref{eq: g_u};\label{line:g-u:oracle}
        \State Calculate $ f_{u+1}$ via Equation~\eqref{eq: f from g};\label{line:f-u+1:oracle}
        \EndFor\label{line:for-end:oracle}
        \State Obtain $f^\star=\min\limits_{\valpha_n\in\R} \left\{f_n(\valpha_n)+\vlambda_n\bbbone(\valpha_n)\right\}$, $\vx^\star_{n}=\argmin\limits_{\valpha_n\in \R}  \left\{f_n(\valpha_n)+\vlambda_n\bbbone(\valpha_n)\right\}$, and $\vz^\star_n=\bbbone(\vx^\star_n)$; \label{line:f-star:oracle}
        \For{$u=n-1,\dots, 1$} \label{line:backward-pass-for-loop:oracle}
        \State Set $\vx^\star_{u}=\argmin\limits_{\valpha_u\in\R}\left\{f_u(\valpha_u,\vx^\star_{\B_u\backslash u})+\vlambda_u\bbbone(\valpha_u)\right\}$ and $\vz^\star_u=\bbbone(\vx^\star_u)$;
        \EndFor\label{line:backward-pass-for-loop-end:oracle}
        \State\Return $f^\star$ and $(\vx^\star, \vz^\star)$;
    \end{algorithmic}
\end{algorithm}
	
	\begin{proposition}\label{prop::dp}
		Algorithm~\ref{alg: direct algo} recovers the optimal solution to Problem~\eqref{eq: MIQP} in 
	$O(\delta\ \omega^2\  n\  2^n)$ time and $O(\omega^2\ n\ 2^n)$ memory. 
    \end{proposition}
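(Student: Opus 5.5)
Correctness follows by induction along the topological labeling, using Lemma~\ref{lemma: fu recursion}; the complexity follows by counting quadratic pieces via Lemma~\ref{lemma: f_u as p_s}.

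\textbf{Correctness.} I would argue by induction on $u$ that, when the loop reaches bag $u$, the stored function equals the local parametric cost $f_u$ of \eqref{eq: f_u tree}. The base case is $u=1$. Bag $1$ is a leaf, so $\J_1=\emptyset$ and $f_1=h_1$, which is Line~\ref{line::setup-f1:oracle}.

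For the inductive step, the topological labeling guarantees $v<u$ for every $v\in\parent_{\textsf{T}}(u)$. Hence each $g_v$ has already been formed from the correct $f_v$ via \eqref{eq: g_u}, and Lemma~\ref{lemma: fu recursion} gives $f_u$ exactly. For $u>n-\omega$, the auxiliary bags with $\parent_{\textsf{T}}(u)=\{u-1\}$ make the same recursion eliminate the root-bag variables one at a time. At the end, $\J_n\cup\B_n=\{1,\dots,n\}$ and $\B_n=\{n\}$. Therefore $\min_{\valpha_n}\{f_n(\valpha_n)+\vlambda_n\bbbone(\valpha_n)\}$ is exactly Problem~\eqref{eq: MIQP}, which gives $f^\star$.

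For the backward pass, I would show by reverse induction that the partial vector $\vx^\star_{\B_u\setminus u}$ is consistent with some optimal solution. In that case, minimizing $f_u(\cdot,\vx^\star_{\B_u\setminus u})+\vlambda_u\bbbone(\cdot)$ recovers an optimal $\vx^\star_u$. This holds because $f_u$ is the optimal value of the subproblem with $\B_u$ fixed, and the term $g_u$ entering the child's recursion is precisely this minimum. The needed components of $\B_u\setminus u$ are already determined: by the running intersection property they lie in $\B_{\child(u)}$, whose nodes all have labels larger than $u$.

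\textbf{Complexity.} By Lemma~\ref{lemma: f_u as p_s}, $f_u$ is the minimum of at most $2^{n_u}\le 2^n$ quadratics in $\tau+1\le\omega+1$ variables. Each is stored by its coefficients $(\mA_{u,\bs},\vb_{u,\bs},d_{u,\bs})$ in $O(\omega^2)$ memory, so one $f_u$ takes $O(\omega^2 2^n)$.

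Forming $g_u$ splits each piece into two pieces. One sets $\valpha_u=0$, which deletes a row and column. The other minimizes over $\valpha_u$, which is a rank-one Schur complement update. Both cost $O(\omega^2)$ per piece, so $g_u$ has at most $2^{n_u+1}$ pieces.

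Forming $f_u$ requires summing, over all parents $v$, one piece from each $g_v$, then adding $h_u-\sum_v\phi_v$. Since the subtrees of distinct parents are disjoint, the number of combinations is $\prod_v 2^{n_v+1}=2^{n_u}$. Each combination is a sum of $|\parent_{\textsf{T}}(u)|$ quadratics of size $O(\omega^2)$.

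The main obstacle is bounding the number of parents so that this per-bag cost is $O(\delta\omega^2 2^n)$. I would argue as follows. The distinct parents each contribute a distinct eliminated node $v$ adjacent to $\B_u$, or they are bags whose eliminated node lies in $\mathcal{V}_{u,1}$. Hence $|\parent_{\textsf{T}}(u)|\le\Delta_1\le\delta$ by Assumption~\ref{assumption: poly growth of m-deg}. If a parent's eliminated node is not adjacent to $\B_u$, I would instead charge it to the bags in its subtree, giving the same total order.

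Summing over $n$ bags yields $O(\delta\omega^2 n 2^n)$ time. The memory bound $O(\omega^2 n 2^n)$ comes from keeping every $f_u$ for the backward pass.
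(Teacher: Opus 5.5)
Your proposal is correct and follows the paper's proof essentially step for step. Correctness goes by induction along the topological labeling via Lemma~\ref{lemma: fu recursion}, and your reverse-induction account of the backward pass is more explicit than the paper's one-line ``backtracking''; complexity goes by counting pieces with Lemma~\ref{lemma: f_u as p_s} together with a bound on the number of parents. Your doubt about the parent count is warranted, since an eliminated node $v$ need not be adjacent to $\B_u$, so the paper's bare assertion $|\parent_{\textsf{T}}(u)|\le\Delta_1\le\delta$ is not automatic. Your charging idea closes the gap: every bag has exactly one child, so $\sum_u|\parent_{\textsf{T}}(u)|\le n$, and the combination step costs $O\bigl(\omega^2 2^n\sum_u|\parent_{\textsf{T}}(u)|\bigr)=O(\omega^2 n 2^n)$ in total without invoking $\Delta_1$ or $\delta$ at all.
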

	\begin{proof}
    We start with the correctness proof.
    \paragraph{Correctness proof.}
    The algorithm computes the local parametric cost $f_u$ for each node $u\in\{1,\ldots,n\}$. For the base case $u=1$, there are no preceding subproblems; consequently, the algorithm computes $f_1$ according to Line~\ref{line::setup-f1:oracle}, which coincides with the local parametric cost defined in Equation~\eqref{eq: f_u tree}. For $u>1$, the algorithm inductively applies Lemma~\ref{lemma: fu recursion} to correctly construct the local parametric cost function $f_u$. After constructing $f_n$, the algorithm evaluates the optimal cost $f^\star$, and an optimal solution $(\vx^\star,\vz^\star)$ is obtained by backtracking through the local parametric costs.  
        
    \paragraph{Complexity proof.}
        We analyze the runtime by examining each step of the algorithm. The labeling step in Line~\ref{line::labeling_step:oracle} requires $\mathcal{O}(n\omega^2)$ time and $\mathcal{O}(n\omega)$ memory, as shown in Section~\ref{sec: tree-decomposition}. Initializing the first local parametric cost $f_1$ requires $\mathcal{O}(\omega^2)$ time and memory (Line~\ref{line::setup-f1:oracle}).

        Next, we analyze the complexity of the first \texttt{for} loop (Lines~\ref{line:for-begin:oracle}--\ref{line:for-end:oracle}). For each $u=1,\dots,n-1$, the function $g_u$ can be obtained by separately minimizing each piece of $f_u$ with and without the indicator variable. Since $f_u$ contains at most $2^{n_u}$ pieces (Lemma~\ref{lemma: f_u as p_s}), computing $g_u$ requires $\mathcal{O}((2\omega^2)2^{n_u})=\mathcal{O}(\omega^2\  2^{n_u})$ time and memory. In Line~\ref{line:f-u+1:oracle}, the function $f_{u+1}$ is computed according to~\eqref{eq: f from g}, which we rewrite here for convenience:
        \begin{align*}
            f_{u+1}(\valpha_{\B_{u+1}})=h_{u+1}(\valpha_{\B_{u+1}})+\sum_{v\in\parent_\textsf{T}(u+1)}\Bigl(g_v(\valpha_{\B_v\backslash v})-\phi_v(\valpha_{\B_v\backslash v})\Bigr).
        \end{align*}
        The function $h_{u+1}$ and the collection $\{\phi_v:v\in\parent_{\textsf{T}}(u+1)\}$ are single-piece quadratic functions. They are computed using Equations~\eqref{eq::h} and \eqref{eq: phi_u}, respectively, and each requires $\mathcal{O}(\omega^2)$ time and memory. Moreover, the functions~$\{g_v:v\in\parent_{\textsf{T}}(u+1)\}$ have already been computed in Line~\ref{line:g-u:oracle}. Constructing a single piece of $f_{u+1}$ proceeds by selecting one piece from each parent term $g_v-\phi_v$ for all $v\in\parent_{\textsf{T}}(u+1)$. Since $|\parent_{\textsf{T}}(u+1)|\le\Delta_1\le \delta$, computing each piece incurs $\mathcal{O}(\delta\ \omega^2)$ time and $\mathcal{O}(\omega^2)$ memory. By Lemma~\ref{lemma: f_u as p_s}, $f_{u+1}$ has at most $2^{n_{u+1}}$ pieces; hence, the total cost of computing $f_{u+1}$ is $\mathcal{O}(\delta\ \omega^2 \  2^{n_{u+1}})$ time and $\mathcal{O}(\omega^2 \  2^{n_{u+1}})$ memory. 
        Since the first \texttt{for} loop runs for $n-1$ iterations, it incurs a cost of $\mathcal{O}\bigl(\sum_{u=1}^{n-1}\delta\ \omega^2\ 2^{n_{u+1}}\bigr)=\mathcal{O}(n\ \delta\ \omega^2 \  2^{n})$ time and $\mathcal{O}(n\ \omega^2 \  2^{n})$ memory. 

        Obtaining $f^\star$, $\vx^\star_n$, and $\vz^\star_n$ in Line~\ref{line:f-star:oracle} requires $\mathcal{O}(2^{n})$ time, since $f_n$ consists of at most $2^n$  pieces. Each iteration of the second \texttt{for} loop (Lines~\ref{line:backward-pass-for-loop:oracle}--\ref{line:backward-pass-for-loop-end:oracle}), requires $\mathcal{O}(\omega^2\  2^{n_u})$ time; we omit the details for brevity. Combining all these steps, we conclude that the algorithm runs in $\mathcal{O}(\delta\ \omega^2\  n\  2^n)$ time and $\mathcal{O}(\omega^2\  n\  2^n)$ memory.

	\end{proof}

\section{Pruning}\label{sec: Parametric algorithm}
The proposed DP approach for solving Problem~\eqref{eq: MIQP} can quickly become intractable, as the number of quadratic functions required to characterize the local parametric costs may grow exponentially. In this section, we aim to address this challenge. We begin by presenting the following lemma, establishing that the optimal solution to Problem~\eqref{eq: MIQP} lies within a bounded region. The proof relies on a result introduced later (Lemma~\ref{thm: decaying inv}) and is deferred to Appendix~\ref{app:proof-U}.

\begin{lemma}\label{lemma: U}
	Let $(\vx^{\star},\vz^{\star})$ be an optimal solution to Problem~\eqref{eq: MIQP}. Define $C_1:=\frac{1}{\mu_{\min}}\ \max\left\{1,\frac{(1+\sqrt{\kappa_2})^2}{2\kappa_2}\right\}$ and $\rho:= \frac{\sqrt{\kappa_2}-1}{\sqrt{\kappa_2}+1}$. We have $\|\vx^{\star}\|_{\infty}\le U$, where:
    \begin{itemize}
        \item $U=\frac{2\bw C_1}{1-\rho}\ \norm{\vc}_\infty$ if $\mQ$ is banded with bandwidth $\bw$;
        \item $U= \frac{\delta \gamma! \, C_1}{(1-\rho)^{\gamma+1}}\  \|\vc\|_\infty$ if the polynomial volume growth (Assumption~\ref{assumption: poly growth of m-deg}) is satisfied.
    \end{itemize}
\end{lemma}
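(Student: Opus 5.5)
Given an optimal $(\vx^\star,\vz^\star)$, let $\mathcal{S}=\{i:\vz^\star_i=1\}$. Since the binaries are fixed, $\vx^\star$ must minimize the strongly convex quadratic restricted to $\mathcal{S}$. Hence $\vx^\star_{\mathcal{S}}=-(\mQ_{\mathcal{S},\mathcal{S}})^{-1}\vc_{\mathcal{S}}$ and $\vx^\star_i=0$ for $i\notin\mathcal{S}$. It therefore suffices to bound $\|(\mQ_{\mathcal{S},\mathcal{S}})^{-1}\vc_{\mathcal{S}}\|_\infty \le \|(\mQ_{\mathcal{S},\mathcal{S}})^{-1}\|_\infty\,\|\vc\|_\infty$ uniformly over all index sets $\mathcal{S}$. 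So the task reduces to bounding the maximum absolute row sum of the inverse of an arbitrary principal submatrix.

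\textbf{Step 1: entrywise decay of the inverse.} For this I would invoke the decay lemma announced as Lemma~\ref{thm: decaying inv}. I expect it to be a Demko--Moss--Smith type bound: for a positive definite $\bm{M}$ with unit-scaled spectrum,
\[
|(\bm{M}^{-1})_{ij}| \le C_1\,\rho^{\dist(i,j)},
\]
with $C_1,\rho$ depending only on $\mu_{\min}(\bm{M})$ and $\kappa_2(\bm{M})$ and the distance taken in $\supp(\bm{M})$. The argument is Chebyshev polynomial approximation of $1/x$ on $[\mu_{\min},\mu_{\max}]$: a degree-$k$ polynomial in $\bm{M}$ has zero $(i,j)$ entry when $\dist(i,j)>k$, so the approximation error gives the decay. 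I apply this to $\bm{M}=\mQ_{\mathcal{S},\mathcal{S}}$. By Cauchy interlacing, $\mu_{\min}(\mQ_{\mathcal{S},\mathcal{S}})\ge\mu_{\min}$ and $\mu_{\max}(\mQ_{\mathcal{S},\mathcal{S}})\le\mu_{\max}$, so the constants for the submatrix are no worse than $C_1$ and $\rho$ defined from $\mQ$. I need $C_1$ and $\rho$ to be monotone in the right direction, which the explicit forms in the statement suggest.

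One subtlety is that distances in $\supp(\mQ_{\mathcal{S},\mathcal{S}})$ are at least distances in $\supp(\mQ)$, because paths in an induced subgraph are paths in the full graph. The decay bound is therefore only strengthened when measured with $\dist$ in $\supp(\mQ)$, and node counts can be done in the full graph.

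\textbf{Step 2: summing a row.} For a fixed row $i$, I group the indices $j$ by shell $d=\dist(i,j)$:
\[
\sum_j |(\mQ_{\mathcal{S},\mathcal{S}}^{-1})_{ij}| \le C_1\sum_{d\ge0} N_d\,\rho^d,
\]
where $N_d$ is the number of nodes at distance $d$ from $i$.

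In the banded case, a graph with bandwidth $\bw$ satisfies $|i-j|\le \bw\cdot\dist(i,j)$, so $N_d\le 2\bw$ for $d\ge1$. The geometric sum then gives roughly $2\bw C_1/(1-\rho)$, absorbing the $d=0$ term.

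Under polynomial growth, I bound the ball of radius $m$ around $i$ by $\delta m^\gamma$ and use summation by parts, $\sum_d N_d\rho^d\le(1-\rho)\sum_m |\text{ball}_m|\rho^m$. Then $\sum_m m^\gamma\rho^m\le \gamma!/(1-\rho)^{\gamma+1}$, which yields $\delta\gamma!\,C_1/(1-\rho)^{\gamma+1}$ after accounting for the $(1-\rho)$ factor.

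\textbf{Main obstacle.} Assumption~\ref{assumption: poly growth of m-deg} bounds $|\mathcal{V}_{u,m}|$, which is the neighborhood of a bag within $\supp_u(\mQ)$, not a ball around an arbitrary node in the whole graph. I must argue that balls around a node are controlled by some $\Delta_m$, for instance by choosing $u$ appropriately (the root auxiliary bags give $\supp_u(\mQ)=\supp(\mQ)$) or by covering the ball with neighborhoods of at most two bags. I would first check whether the constant $\delta$ survives this reduction without extra factors. I would also adjust the $d=0$ term and the constant bookkeeping so the final expressions match $U$ exactly.
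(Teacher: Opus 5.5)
Your route is the paper's own (Appendix~\ref{app:proof-U}): restrict to the support of $\vz^\star$, bound $\|\vx^\star\|_\infty\le\|(\mQ_{\mathcal S,\mathcal S})^{-1}\|_\infty\|\vc\|_\infty$, apply Lemma~\ref{thm: decaying inv} entrywise, and sum each row by graph distance. Two steps need repair, and the paper's proof has the same two weaknesses.

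\textbf{Banded case.} The step ``$|i-j|\le\bw\,\dist(i,j)$, so $N_d\le 2\bw$'' does not follow. The index bound only places the whole ball $B_d(i)=\{j:\dist(i,j)\le d\}$ inside a window of $2\bw d+1$ indices. Shells can in fact exceed $2\bw$. The complete binary tree of height $3$ has a layout of bandwidth $3$: root at position $8$, children at $5,11$, grandchildren at $2,6,10,14$, leaves in the remaining slots beside their parents. Yet the root has $8>2\bw$ vertices at distance $3$. The paper asserts the same shell bound. The fix is the summation-by-parts identity you already use in the other case:
\[
\sum_j\rho^{\dist(i,j)}=(1-\rho)\sum_{d\ge0}|B_d(i)|\,\rho^d\le(1-\rho)\sum_{d\ge0}(2\bw d+1)\rho^d=1+\frac{2\bw\rho}{1-\rho}\le\frac{2\bw}{1-\rho}.
\]

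\textbf{Polynomial growth case.} The obstacle you flag is genuine, and the paper does not address it. It simply writes $N_r\le\delta r^\gamma$, i.e.\ it reads Assumption~\ref{assumption: poly growth of m-deg} as a bound on balls around every node. The auxiliary root bags only control neighbourhoods of root-bag nodes. What works is a single bag at doubled radius:
\begin{itemize}
\item $B_m(i)$ is connected, so the bags meeting it form a subtree of $\textsf{T}$. Let $\B_t$ be its bag nearest the root.
\item If $\B_t$ is not the root bag, then $\B_{\child_{\textsf{T}}(t)}$ misses $B_m(i)$, so by the labeling $\B_t\cap B_m(i)=\{t\}$.
\item Every bag meeting $B_m(i)$ lies in the part of $\textsf{T}$ that defines $\supp_t(\mQ)$, so $B_m(i)\setminus\{t\}\subseteq\J_t$.
\item Each $j\in B_m(i)$ satisfies $\dist(j,\B_t)\le\dist(j,i)+\dist(i,t)\le 2m$, via a path inside $B_m(i)$.
\end{itemize}
Hence $|B_m(i)|\le\omega+1+\Delta_{2m}\le\omega+1+2^\gamma\delta m^\gamma$. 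Summation by parts then gives a bound of the same shape, but with $2^\gamma\delta$ and an additive $\omega+1$, not the exact $U$ in the statement.

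The exact constant does follow under the uniform reading $|B_m(i)|\le 1+\delta m^\gamma$ that the paper implicitly uses. Absorb the centre term using $\sum_{m\ge1}m^\gamma\rho^m\le\rho\,\gamma!/(1-\rho)^{\gamma+1}$ together with $\delta\gamma!\ge 1$.

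\textbf{Step 1.} $C_1$ is not monotone in the right direction when passing to $\mQ_{\mathcal S,\mathcal S}$. Its factor $\tfrac12(\mu_{\min}^{-1/2}+\mu_{\max}^{-1/2})^2$ grows as $\mu_{\max}$ shrinks. Avoid this by using the Chebyshev polynomial on the interval $[\mu_{\min},\mu_{\max}]$ of $\mQ$, which contains the spectrum of $\mQ_{\mathcal S,\mathcal S}$ by interlacing.
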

An important feature of the above lemma is that it provides an element-wise $\ell_\infty$-norm bound on the optimal solution, rather than a more conventional $\ell_2$-norm bound. As will be explained, this finer, coordinate-wise control is crucial for our algorithmic development. While \citet[Theorem 2.1]{bertsimas2016best} also derive an $\ell_\infty$-norm bound, their result does not exploit the sparsity structure of $\mQ$, and consequently scales with the problem dimension. In contrast, Lemma~\ref{lemma: U} leverages this structure and avoids explicit dimensional dependence under the stated assumptions.

That said, even the bound in Lemma~\ref{lemma: U} can be conservative in practice. In many applications, sharper instance-specific bounds are readily available. For example, in the exponential smoothing model with outlier correction, one may safely set $U = \max_t |\vy_t|$, the maximum absolute magnitude of the observed signal. Such bounds are often substantially tighter. Our computational results indicate that even the general bound remains practically effective, but incorporating tighter bounds can yield substantial computational gains.

While characterizing the local parametric cost $f_u$ over the entire $\R^{\tau+1}$ may require exponentially many quadratic functions, the above lemma implies that it suffices to characterize this function only within the bounded region $\mathcal{D}=\{\vx: \norm{\vx}_\infty\leq U\}$. Recalling \eqref{eq: f_u in terms of p_u}, this implies that any quadratic function $p_{u,\bs}$ that satisfies $p_{u,\bs}(\valpha_{\B_u})>f_u(\valpha_{\B_u})$ within the region $\mathcal{D}_u=\{\valpha_{\B_u}: \norm{\valpha_{\B_u}}_\infty\leq U\}$ can be safely discarded. 
 
\begin{definition}[Relevant and irrelevant functions]
Let $f_u(\valpha_{\B_u})=\min_{s\in\{0,1\}^{n_u}} \left\{p_{u,\bs}(\valpha_{\B_u})\right\}$. A function $ p_{u,\bs}$ is called irrelevant if $p_{u,\bs}(\valpha_{\B_u})>f_u(\valpha_{\B_u})$ within the region $\mathcal{D}_u=\{\valpha_{\B_u}: \norm{\valpha_{\B_u}}_\infty\leq U\}$. Conversely, $ p_{u,\bs}$ is called relevant if $p_{u,\bs}(\valpha_{\B_u})=f_u(\valpha_{\B_u})$ for some $\valpha_{\B_u}\in \mathcal{D}_u$.
\end{definition}

\begin{figure}[htbp]
	\centering
	\includegraphics[width=0.5\linewidth]{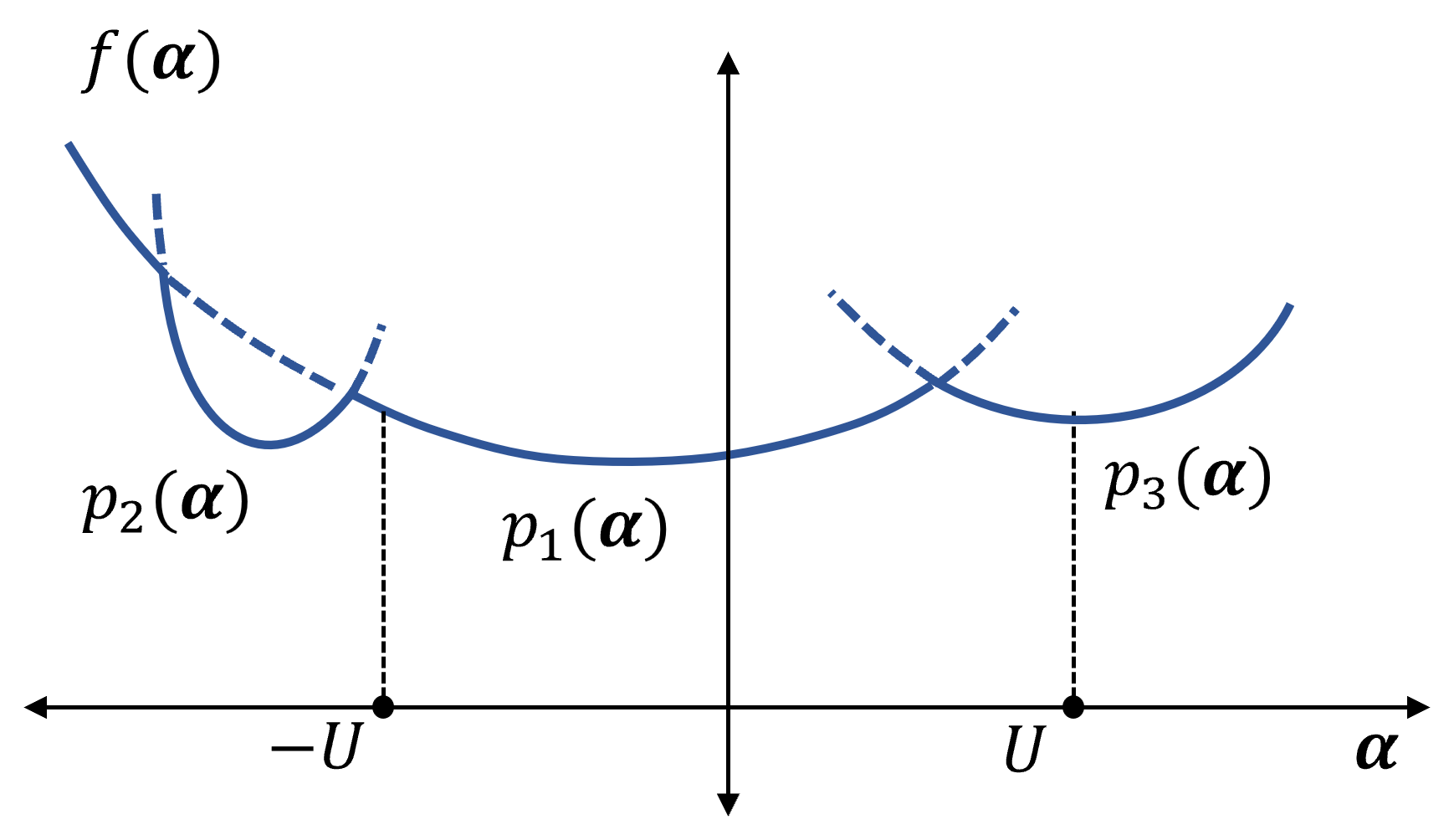}
	\caption{The piecewise quadratic function $f(\valpha)=\min\left\{ p_{1}(\valpha), p_{2}(\valpha), p_{3}(\valpha)\right\}$ is shown by the solid blue line. Here, $p_2$ is irrelevant since $ p_2(\valpha)>f(\valpha)$ for every $-U\leq \valpha\leq U$.}
	\label{fig: irrelevant1}
\end{figure}

Figure~\ref{fig: irrelevant1} illustrates an example distinguishing relevant and irrelevant functions. To separate the two, we present the following lemma, which provides a lower bound on the intersection points of two quadratic functions. The proof of this lemma is presented in Appendix~\ref{subsec:roots-proof}.

\begin{lemma}\label{lemma: roots bound}
Let $p_1, p_2: \mathbb{R}^{\tau+1}\to \mathbb{R}$ be two quadratic functions of the form $p_1(\valpha) := \tfrac{1}{2} \valpha^\top \mA_1 \valpha + \vb_1^\top \valpha + d_1$ and $p_2(\valpha) := \tfrac{1}{2} \valpha^\top \mA_2 \valpha + \vb_2^\top \valpha + d_2$. Let $\bar a\geq \frac{1}{2}\norm{\mA_1-\mA_2}_{1,1},\bar b\geq \norm{\vb_1-\vb_2}_{1}$ and $|d_1-d_2|\geq \bar d$.
	Let $\hat \valpha$ be a real root of the equation $ p_{1}(\valpha)- p_{2}(\valpha)=0$. If no real root exists, we set $\hat{\valpha} = +\infty$. Then, we have
	\begin{align}\label{eq: LB}
		\|\hat \valpha\|_\infty\ge L(p_1,p_2),\qquad\text{where\ } L(p_1,p_2):= \begin{cases}
			\frac{-\bar b+\sqrt{\bar b^2+4\bar a\bar d}}{2\bar a}&\text{if } \bar a\ne 0,\\
			\frac{\bar d}{\bar b }&\text{otherwise. }\\
		\end{cases}
	\end{align}
\end{lemma}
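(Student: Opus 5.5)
Set $q := p_1 - p_2$, so $q(\valpha) = \tfrac12 \valpha^\top(\mA_1-\mA_2)\valpha + (\vb_1-\vb_2)^\top\valpha + (d_1-d_2)$. The idea is to bound $|q(\valpha)|$ from below by a univariate quadratic in $t:=\|\valpha\|_\infty$. A real root $\hat\valpha$ must then have $t$ at least as large as the smallest positive root of that quadratic.

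The first step bounds the non-constant terms via Hölder-type estimates. Since $|\valpha_i\valpha_j|\le t^2$, we get
\[
\left|\tfrac12 \valpha^\top(\mA_1-\mA_2)\valpha\right| \le \tfrac12\|\mA_1-\mA_2\|_{1,1}\, t^2 \le \bar a\, t^2 ,
\]
and $|(\vb_1-\vb_2)^\top\valpha|\le \|\vb_1-\vb_2\|_1\, t\le \bar b\, t$. I read the hypothesis on $d$ as $\bar d\le |d_1-d_2|$ with $\bar d\ge 0$, so that $L$ is the positive-root formula. By the reverse triangle inequality,
\[
|q(\valpha)| \ge |d_1-d_2| - \bar a t^2 - \bar b t \ge \bar d - \bar a t^2 - \bar b t =: \psi(t).
\]

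The second step treats the root. If $q(\hat\valpha)=0$, then $\psi(\|\hat\valpha\|_\infty)\le 0$. The function $\psi$ is continuous and nonincreasing on $[0,\infty)$ because $\bar a,\bar b\ge0$, and $\psi(0)=\bar d\ge 0$. Hence $\psi(t)>0$ for every $t$ below its smallest nonnegative zero, so $\|\hat\valpha\|_\infty$ is at least that zero. When $\bar a\neq0$, solving $\bar a t^2+\bar b t-\bar d=0$ gives the nonnegative root $\frac{-\bar b+\sqrt{\bar b^2+4\bar a\bar d}}{2\bar a}$. When $\bar a=0$, the zero is $\bar d/\bar b$, with $+\infty$ when $\bar b=0$; in that case $q$ is a nonzero constant whenever $\bar d>0$, so no root exists, matching $\hat\valpha=+\infty$. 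If no real root exists at all, the claim holds trivially by the convention $\hat\valpha=+\infty$.

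The only delicate point is bookkeeping rather than mathematics. One must check the factor $\tfrac12$ so that $\bar a$ is compared against $\tfrac12\|\cdot\|_{1,1}$ exactly as stated, and handle the degenerate cases $\bar d=0$ and $\bar a=\bar b=0$ consistently with the $+\infty$ convention. The main estimate itself is just monotonicity of $\psi$.
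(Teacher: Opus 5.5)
Your proposal is correct and follows essentially the same route as the paper: both use the entrywise Hölder bounds $\tfrac12|\valpha^\top(\mA_1-\mA_2)\valpha|\le \bar a\|\valpha\|_\infty^2$ and $|(\vb_1-\vb_2)^\top\valpha|\le \bar b\|\valpha\|_\infty$ to reduce the root condition to $\bar a t^2+\bar b t\ge \bar d$ with $t=\|\hat\valpha\|_\infty$, and then solve this scalar quadratic inequality. The differences are cosmetic: you use the reverse triangle inequality where the paper assumes $d_2\ge d_1$ without loss of generality, you argue via continuity and monotonicity of $\psi$ where the paper completes the square, and your treatment of the degenerate cases ($\bar d=0$, and $\bar a=\bar b=0$) is slightly more careful than the paper's.
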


The above lemma provides a criterion for identifying irrelevant functions: given a pair of sparsity patterns $\bs^{(1)}, \bs^{(2)} \in \{0,1\}^{n_u}$ and their corresponding quadratic functions $p_{u,\bs^{(1)}}$ and $p_{u,\bs^{(2)}}$, one can compute $L(p_{u,\bs^{(1)}}, p_{u,\bs^{(2)}})$ using Lemma~\ref{lemma: roots bound}. 
If $L(p_{u,\bs^{(1)}}, p_{u,\bs^{(2)}}) > U$ and $p_{u,\bs^{(1)}}(\bm{0}) > p_{u,\bs^{(2)}}(\bm{0})$ (or $p_{u,\bs^{(1)}}(\bm{0} < p_{u,\bs^{(2)}}(\bm{0})$, respectively), 
then $p_{u,\bs^{(1)}}$ (or $p_{u,\bs^{(2)}}$, respectively) can be declared irrelevant.

Having established the criterion for identifying irrelevant functions, we now describe how this criterion can be incorporated into Algorithm~\ref{alg: direct algo}. For some $1 \leq u \leq n-1$, suppose that the local parametric function $f_{u+1}$ has already been computed in Line~\ref{line:f-u+1:oracle} and consists of $N$ quadratic pieces; that is, $f_{u+1}(\valpha_{\B_{u+1}}) = \min_{\bs \in \mathcal{P}_{u+1}} \{ p_{u+1,\bs}(\valpha_{\B_{u+1}}) \}$ with $|\mathcal{P}_{u+1}| = N$. To identify and discard irrelevant pieces within this set, a pruning subroutine can be inserted immediately after Line~5. This subroutine computes $L(p_{u,\bs^{(1)}}, p_{u,\bs^{(2)}})$ for every pair $\bs^{(1)}, \bs^{(2)} \in \mathcal{P}_{u+1}$ and removes any index $\bs^{(1)} \in \mathcal{P}_{u+1}$ whose corresponding function $p_{u,\bs^{(1)}}$ satisfies $L(p_{u,\bs^{(1)}}, p_{u,\bs^{(2)}}) > U$ and $p_{u,\bs^{(1)}}(\bm{0}) > p_{u,\bs^{(2)}}(\bm{0})$. The resulting refined algorithm is presented in Algorithm~\ref{alg: para algo}. The key distinction between this version and Algorithm~\ref{alg: direct algo} lies in the inclusion of the \texttt{PRUNE} subroutine (Algorithm~\ref{alg: prune}), which performs the pruning operation described above. For a function with $N$ quadratic pieces, the \texttt{PRUNE} subroutine requires $\mathcal{O}(N^2)$ pairwise comparisons between the quadratic pieces. Each comparison involves computing the corresponding $L$, which can be done in $\mathcal{O}(\omega^2)$ time and memory. Hence, the overall time and memory complexities of the \texttt{PRUNE} subroutine are $\mathcal{O}(\omega^2 N^2)$ and $\mathcal{O}(\omega^2 N)$, respectively. We note in passing that the time complexity of this subroutine can be improved to $\mathcal{O}(\omega^2 N)$ when the tree decomposition of $\mQ$ is a path. Further discussion on this special case is deferred to Section~\ref{sec: experiments synthetic} and Appendix~\ref{app: trim heuristic}.

\begin{algorithm}[htbp]
	\caption{Parametric algorithm for graphs with bounded treewidth matrices}\label{alg: para algo}
\textbf{Input:} Problem~\eqref{eq: MIQP} and a tree decomposition~$\textsf{T}$ of~$\supp(\mQ)$ with width~$\omega$ and upper bound $U$;\\
\textbf{Output:} The optimal cost~$f^\star$ and an optimal solution~$(\vx^\star, \vz^\star)$ of Problem~\eqref{eq: MIQP};
    \begin{algorithmic}[1]
        \State Label the nodes of~$\supp(\mQ)$ according to the scheme described in Section~\ref{sec: tree-decomposition};\label{line::ordering}
        \State Set $f_1(\valpha_{\B_1})=\frac{1}{2}\valpha_{\B_1}^\top \mQ_{\B_1,\B_1}\valpha_{\B_1}+\valpha_{\B_1}^\top \vc_{{\B_1}}.$\label{line::setup-f1}
        \For{$u=1,\dots, n-1$}\label{line:for-begin}
        \State Calculate $g_{u}$ from $f_{u}$ via  Equation~\eqref{eq: g_u};\label{line::compute-g_u}
        \State Calculate $ f_{u+1}$ via Equation~\eqref{eq: f from g};\label{line::compute-f-u+1}
        \State Set $f_{u+1}=\texttt{PRUNE}(f_{u+1}, U)$;\label{algstep::prune}
        \EndFor\label{line:for-end}
        \State Obtain $f^\star=\min\limits_{\valpha_n\in\R} \left\{f_n(\valpha_n)+\vlambda_n\bbbone(\valpha_n)\right\}$, $\vx^\star_{n}=\argmin\limits_{\valpha_n\in \R}  \left\{f_n(\valpha_n)+\vlambda_n\bbbone(\valpha_n)\right\}$, and $\vz^\star_n=\bbbone(\vx^\star_n)$; \label{line::x-n}
        \For{$u=n-1,\dots, 1$} \label{line::for2-begin}
        \State Set $\vx^\star_{u}=\argmin\limits_{\valpha_u\in\R}\left\{f_u(\valpha_u,\vx^\star_{\B_u\backslash u})+\vlambda_u\bbbone(\valpha_u)\right\}$ and $\vz^\star_u=\bbbone(\vx^\star_u)$;
        \EndFor\label{line::for2-end}
        \State\Return $f^\star$ and $(\vx^\star, \vz^\star)$;
	\end{algorithmic}
\end{algorithm}

\begin{algorithm}[htbp]
	\caption{$\texttt{PRUNE}(f,U)$}\label{alg: prune}
	\textbf{Input:} function $f$ characterized by its quadratic pieces $\{p_s\}_{s\in \I}$ and the constant $U$. \\
	\textbf{Output: pruned function $ f^{\trim}$} 
	\begin{algorithmic}[1]
		\State Initialize $\mathcal{S}^{\trim}$ as an empty list; 
        \State Convert $\I$ into a list and store it in $\mathcal{S}$;
        \While{$\mathcal{S}\ne \emptyset$}
        \State Set $i$ as the first element of $\mathcal{S}$ and delete $i$ from $\mathcal{S}$; 
        \State Add $i$ to $\mathcal{S}^{\trim}$; \Comment{Assume $p_i$ is relevant}
        \For{$j\in \mathcal{S}$} 
        \State Calculate $L(p_i,p_j)$;
        \If{$L > U$} \Comment{Either $p_i$ or $p_j$ is irrelevant}
            \If{$p_i(0)<p_j(0)$} \Comment{$ p_{j}$ is irrelevant}
    			\State Delete $j$ from $\mathcal{S}$;
    		\Else \Comment{$p_i$ is irrelevant}
    			\State Delete $i$ from $\mathcal{S}^\trim$;
                \State \texttt{break}; \Comment{Exit for-loop}
    		\EndIf
            \EndIf
        \EndFor
        \EndWhile
		\State\Return $f^{\trim}$ characterized by the quadratic pieces $\{p_i\}_{i\in \mathcal{S}^\trim}$; 
		\end{algorithmic}
	\end{algorithm}	

\begin{figure}[htbp]
	\centering
	\includegraphics[width=0.6\linewidth]{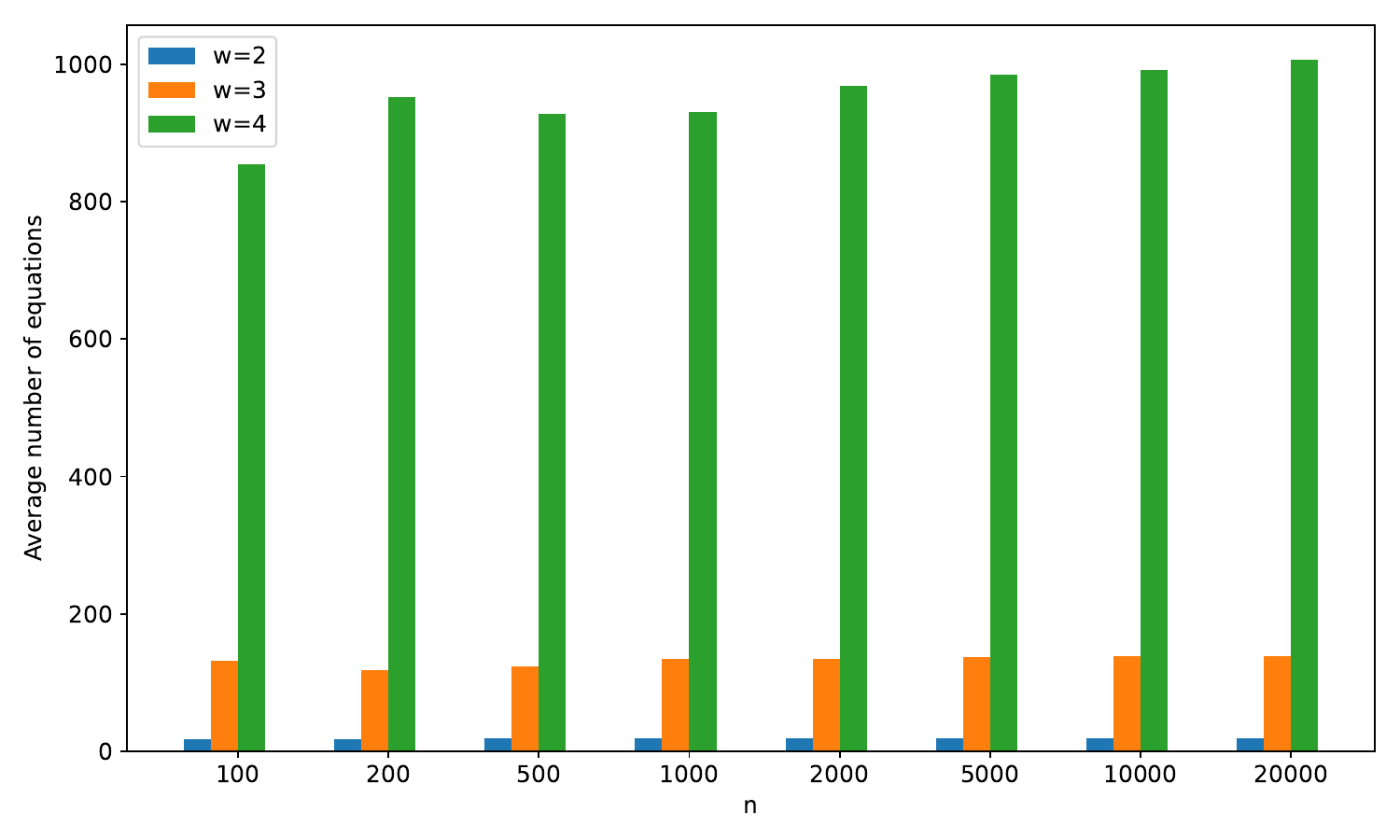}
	\caption{Number of equations computed for the pruned parametric algorithm for different values of $n$ and $\bw$. The reported results are averaged over 5 trials.}
	\label{fig: vary_n_eqs}
\end{figure} 

While the theoretical guarantees of this pruning strategy will be discussed in detail in the next section, here we briefly highlight the empirical effectiveness of this approach. Figure~\ref{fig: vary_n_eqs} illustrates the performance of the pruning subroutine on synthetically generated banded matrices of varying sizes $n$ between $100$ and 20,000, and varying bandwidths $\bw$ between $2$ and $4$ (corresponding to treewidths between $2$ and $4$). The figure shows the number of quadratic equations retained after pruning, averaged over local parametric costs and across five independent trials. Notably, even for the largest instance with $n =$ 20,000, the average number of quadratic equations after pruning does not exceed 1,100, whereas the number of quadratic equations without pruning scales as $2^{20{,}000}$. This shows the effectiveness of the pruning subroutine in eliminating irrelevant equations.

\section{Theoretical analysis}\label{sec::theoretical-analysis}
The empirical performance of the proposed parametric algorithm with pruning suggests that, among exponentially many quadratic equations, only a small subset is relevant for characterizing the local parametric cost $f_u$. This observation implies that, for most pairs of quadratic functions $(p_{u,\bs^{(1)}}, p_{u,\bs^{(2)}})$, their intersection occurs outside the relevant region $\mathcal{D}_u=\{\valpha_{\B_u}: \norm{\valpha_{\B_u}}_\infty\leq U\}$. In this section, we provide a theoretical justification for this key observation. In particular, we show that the greater the similarity between the sparsity patterns $\bs^{(1)}, \bs^{(2)} \in \{0,1\}^{n_u}$, the larger the roots of the difference $p_{u,\bs^{(1)}} - p_{u,\bs^{(2)}}$ become. We quantify the similarity of two sparsity patterns using the notion of \textit{$m$-similarity}. For any bag $\B_u$ and positive integer $m$, recall that $\mathcal{V}_{u,m} = \{ i \in \J_u \mid \dist(i,\B_u) \le m \}$ is the $m$-neighborhood of $\B_u$ within the induced subgraph $\supp_u(\mQ)$,
where $\J_u$ is the set of nodes in $\supp_u(\mQ)$ excluding those in $\B_u$. Recall that $\pi: \I\to \{1,\dots, |\I|\}$ is the canonical indexing map that assigns to each $i \in \I$ its corresponding row/column position within the submatrix $\mQ_{\I,\I}$. 

\begin{definition}[$m$-similarity]\label{def: msimilar}
The two sparsity patterns $\bs^{(1)},\bs^{(2)} \in \{0,1\}^{n_u}$ are called $m$-similar with respect to bag $\B_u$ if $\bs^{(1)}_{\pi_u(i)} = \bs^{(2)}_{\pi_u(i)}$ for all $i \in \mathcal{V}_{u,m}$. When the reference bag $\B_u$ is clear from the context, we simply say that $\bs^{(1)}$ and $\bs^{(2)}$ are {$m$-similar}.
\end{definition}

We will show that, for two $m$-similar sparsity patterns $\bs^{(1)}, \bs^{(2)} \in \{0,1\}^{n_u}$, the norm of the root(s) of the difference $p_{u,\bs^{(1)}} - p_{u,\bs^{(2)}}$ grows exponentially with $m$. To this end, we first establish a key decay property of $\mQ^{-1}$. Specifically, we show that the entries of the inverse of any principal submatrix of $\mQ$ decay exponentially with the distance between the corresponding nodes in its support graph.
\begin{lemma}\label{thm: decaying inv}
	Let $\mQ\in \mathbb{R}^{n\times n}$ be a symmetric positive definite matrix, and $\I\subseteq \{1,\dots, n\}$ any subset of its rows/columns. For any $i,j\in \I$, we have
	\begin{align}
		\left|\left[\mQ^{-1}_{\I,\I}\right]_{\pi(i),\pi(j)}\right|\le C_1\rho^{\dist(i,j)},\quad \text{where}\quad C_1:=\frac{1}{\mu_{\min}}\ \max\left\{1,\frac{(1+\sqrt{\kappa_2})^2}{2\kappa_2}\right\},\ \text{and } \rho:= \frac{\sqrt{\kappa_2}-1}{\sqrt{\kappa_2}+1}.
	\end{align}
\end{lemma}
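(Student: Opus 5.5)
This is the Demko--Moss--Smith decay bound for inverses of sparse positive definite matrices, and I plan to prove it by polynomial approximation. Write $\bm{M} := \mQ_{\I,\I}$. By eigenvalue interlacing for principal submatrices, $\mu_{\min}(\bm{M}) \ge \mu_{\min}$ and $\mu_{\max}(\bm{M}) \le \mu_{\max}$. Hence the spectrum of $\bm{M}$ lies in $[a,b] := [\mu_{\min},\mu_{\max}]$, and it suffices to prove the bound with constants that depend only on $a$ and $b$, that is, on $\mu_{\min}$ and $\kappa_2$.

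The first key observation is the sparsity structure of powers of $\bm{M}$. The support graph of $\bm{M}$ is the subgraph of $\supp(\mQ)$ induced by $\I$. Distances in this induced subgraph are at least $\dist(i,j)$ measured in $\supp(\mQ)$, since removing vertices can only lengthen shortest paths. For any polynomial $q$ of degree $k$, the entry $[q(\bm{M})]_{\pi(i),\pi(j)}$ vanishes whenever the induced distance exceeds $k$. In particular it vanishes whenever $\dist(i,j) > k$. Therefore, with $d := \dist(i,j) \ge 1$,
$$\left|[\bm{M}^{-1}]_{\pi(i),\pi(j)}\right| = \left|[\bm{M}^{-1} - q(\bm{M})]_{\pi(i),\pi(j)}\right| \le \|\bm{M}^{-1} - q(\bm{M})\|_2 \le \max_{x\in[a,b]} |1/x - q(x)|$$
for every $q$ of degree at most $d-1$. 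The case $d=0$ is handled separately by $|[\bm{M}^{-1}]_{ii}| \le 1/\mu_{\min} \le C_1$.

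The second step is to bound the best uniform approximation error $E_k := \min_{\deg q \le k} \max_{[a,b]} |1/x - q(x)|$. I will use Chebyshev's classical result, via an explicit extremal construction, that $E_k = \frac{(1+\sqrt{\kappa})^2}{2b}\,\rho^{k+1}$ with $\kappa = b/a$ and $\rho = (\sqrt\kappa - 1)/(\sqrt\kappa + 1)$. If the exact formula is not convenient, I can instead bound $E_k$ through the Chebyshev-interpolant or Bernstein-ellipse argument. The function $1/x$ is analytic inside the ellipse with foci $a,b$ passing through $0$, and that ellipse has parameter $\chi = (\sqrt\kappa + 1)/(\sqrt\kappa - 1) = 1/\rho$. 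This gives a geometric rate $\rho^{k+1}$ with a constant of the same form.

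Setting $k = d-1$ gives a bound of $\frac{(1+\sqrt{\kappa_2})^2}{2\mu_{\max}}\rho^{d}$. Since $\mu_{\max} = \kappa_2\mu_{\min}$, this equals $\frac{1}{\mu_{\min}}\frac{(1+\sqrt{\kappa_2})^2}{2\kappa_2}\rho^d \le C_1\rho^d$. Combined with the diagonal case, this yields the stated bound with the max in $C_1$.

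One subtlety is that $\kappa_2(\bm{M}) \le \kappa_2$. The function $\kappa \mapsto \rho(\kappa)$ is increasing, and using the enclosing interval $[a,b]$ directly sidesteps this monotonicity issue entirely.

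The main obstacle is establishing the sharp constant $\frac{(1+\sqrt\kappa)^2}{2b}$ in the approximation error. I would first try to cite Demko--Moss--Smith (1984) or Chebyshev's explicit best approximation of $1/(x-c)$. Failing that, I would verify directly that the candidate $q$ with $1 - xq(x) \propto T_{k+1}\bigl((b+a-2x)/(b-a)\bigr)/T_{k+1}\bigl((b+a)/(b-a)\bigr)$ gives a constant at most $C_1$. This route yields $\max|1/x - q| \le \frac{1}{a}\cdot\frac{2\rho^{k+1}}{1+\rho^{2(k+1)}}$, which is off by a factor. I would then need to reconcile it with the stated $C_1$, possibly by applying the approximation to $\sqrt{\cdot}$-scaled quantities or by accepting the exact Chebyshev formula.
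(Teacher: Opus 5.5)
Your proposal is correct and follows the paper's proof. Both arguments take the best uniform polynomial approximation of $x^{-1}$ on $[\mu_{\min},\mu_{\max}]$ of degree $\dist(i,j)-1$, using the same exact Chebyshev error formula (the paper cites Meinardus for it; your fallback residual-polynomial route would only yield the weaker constant $2/\mu_{\min}$, as you note). Both then use that entries of a degree-$k$ matrix polynomial vanish between nodes at distance greater than $k$, and bound the diagonal case by $1/\mu_{\min}$. Your explicit reduction from $\mQ$ to $\mQ_{\I,\I}$, via Cauchy interlacing and the fact that induced-subgraph distances dominate $\dist(i,j)$, is a step the paper's proof leaves implicit (it argues only for $\mQ^{-1}$ itself), so your write-up is more complete on that point.
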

The proof is presented in Appendix~\ref{app:exp-decay-proof}. The exponential off-diagonal decay of inverses of banded matrices has been extensively studied~\citep{demko1984decay}. The above lemma extends this result to a more general setting, in which the decay structure of the inverse is determined by the sparsity pattern of the matrix.

    Recall from Lemma~\ref{lemma: f_u as p_s} that the local parametric cost $f_u$ can be expressed as $\min_{\bs \in \{0,1\}^{n_u}} \{\, p_{u,\bs}(\valpha_{\B_u}) \,\}$, where each $p_{u,\bs}$ is given as:
    	\begin{equation}\label{eq::p_us2}
        \begin{aligned}
			&p_{u,\bs}(\valpha_{\scriptscriptstyle \B_u})= \frac{1}{2}\valpha_{\scriptscriptstyle \B_u}\mA_{u,\bs}\valpha_{\scriptscriptstyle \B_u}+\vb_{u,\bs}^\top\valpha_{\scriptscriptstyle \B_u}+d_{u,\bs},\\ &\text{where} \begin{cases}
			    \mA_{u,s}&=\mQ_{\scriptscriptstyle \B_u, \B_u}-\mQ_{\scriptscriptstyle \B_u,\J_{u,\bs}} \left(\mQ_{\scriptscriptstyle \J_{u,\bs},\J_{u,\bs}}\right)^{-1}\mQ_{\scriptscriptstyle \B_u,\J_{u,\bs}}^\top\\
		\vb_{u,s}&=\vc_{\scriptscriptstyle \B_u}-\vc_{\scriptscriptstyle \J_{u,\bs}}^T \left(\mQ_{\scriptscriptstyle \J_{u,\bs},\J_{u,\bs}}\right)^{-1}\mQ_{\scriptscriptstyle \B_u,\J_{u,\bs}}^{\top}\\
		d_{u,s}&= -\frac{1}{2}\vc_{\J_{u,\bs}}^\top (\mQ_{\scriptscriptstyle \J_{u,\bs},\J_{u,\bs}})^{-1}\vc_{\scriptscriptstyle \J_{u,\bs}}+\sum_{i\in \J_{u,\bs}} \lambda_{i}.
			\end{cases}
        \end{aligned}
	\end{equation}
    Next, we partition the elements of $\J_{u,\bs} = \{i\in \J_u\mid s_i=1\}$ according to their graph distance from the bag $\B_u$.  
    For an integer $m>0$, let  
    \begin{align}\label{eq: node decomposition}
        \setV_{u,s,m} =  \J_{u,\bs}\cap \setV_{u,m}, \qquad \setW_{u,s,m} =  \J_{u,\bs} \setminus \setV_{u,m}.
    \end{align}
   Since our subsequence analysis holds for any choice of $u\in \{1,\dots, n\}$, $\bs\in \{0,1\}^{n_u}$, and $m\in \{1,\dots, n_u\}$, for simplicity, we drop the subscripts and write $\setV = \setV_{u,\bs,m}$ and $\setW = \setW_{u,\bs,m}$. Intuitively, the set $\setV$ contains all nodes in $\J_{u,\bs}$ that are within distance $m$ from the nodes in the bag $\B_u$, while $\setW$ collects all the nodes in $\J_{u,\bs}$ whose distance from $\B_u$ exceeds $m$. As an illustrative example, consider the graph in Figure~\ref{fig: Label for trees}. When $u = 8$, $m = 2$, and $\bs = [1,1,\dots,1]^\top$, we obtain $\setV = \{2,3,\dots,7\}$ and $\setW = \{1\}$.

    Given $\setV$ and $\setW$, the matrix $\mQ_{\J_{u,\bs},\J_{u,\bs}}$ and its inverse admit the following block structures
    
	\begin{align}\label{eq: Q_s}
		&\mQ_{\J_{u,\bs},\J_{u,\bs}}=\begin{bmatrix}
			\mQ_{\setW,\setW}&\mQ_{\setW,\setV}\\
			\mQ_{\setV,\setW}&\mQ_{\setV,\setV}
		\end{bmatrix},\\
        &\mQ^{-1}_{\J_{u,\bs},\J_{u,\bs}}=\begin{bmatrix}
			(\mQ_{\J_{u,\bs},\J_{u,\bs}}/\mQ_{\setV,\setV})^{-1} & -(\mQ_{\J_{u,\bs},\J_{u,\bs}}/\mQ_{\setV,\setV})^{-1}\mQ_{\setW,\setV}\mQ_{\setV,\setV}^{-1}\\
			-\mQ_{\setV,\setV}^{-1}\mQ_{\setV,\setW}(\mQ_{\J_{u,\bs},\J_{u,\bs}})^{-1} & \mQ^{-1}_{\setV, \setV}+\mQ^{-1}_{\setV, \setV}\mQ_{\setV,\setW}(\mQ_{\J_{u,\bs},\J_{u,\bs}}/\mQ_{\setV,\setV})^{-1}\mQ_{\setW,\setV}\mQ^{-1}_{\setV,\setV}
		\end{bmatrix},\label{eq:Q_inverse}
	\end{align}
    where $\mQ_{\J_{u,\bs},\J_{u,\bs}}/\mQ_{\setV,\setV}:=\mQ_{\setW,\setW}-\mQ_{\setW,\setV}\mQ_{\setV,\setV}^{-1}\mQ_{\setV,\setW}$ is the Schur complement of block $\mQ_{\setW,\setW}$ in $\mQ_{\J_{u,\bs},\J_{u,\bs}}$.
The main motivation for analyzing the block structure of $\mQ^{-1}_{\J_{u,\bs},\,\J_{u,\bs}}$ is that, according to~\eqref{eq::p_us2}, the coefficients $\mA_{u,\bs}$ and $\vb_{u,\bs}$ contain terms involving the block product $\mQ_{\B_u,\setV} \mQ_{\setV,\setV}^{-1} \mQ_{\setV,\setW}$, whose norm, as formally stated in the next lemma, decays exponentially with $m$.

	\begin{lemma}\label{lemma: matrix norm bound}
        We have
        \begin{align}
            \left\|{\mQ_{\B_u,\setV}\mQ^{-1}_{\setV,\setV}\mQ_{\setV,\setW}}\right\|_{2}\le \frac{2\mu_{\max}^2}{\mu_{\min}}\sqrt{\Delta_1\Delta_{m}} \rho^{m-1},   
        \end{align}
        where $0<\rho<1$ is the constant from Lemma~\ref{thm: decaying inv}.
	\end{lemma}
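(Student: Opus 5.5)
The plan is to exploit sparsity on both sides of the product $\mQ_{\B_u,\setV}\mQ^{-1}_{\setV,\setV}\mQ_{\setV,\setW}$ and then apply the exponential decay of Lemma~\ref{thm: decaying inv} to the middle factor.

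First I identify which rows and columns of $\mQ^{-1}_{\setV,\setV}$ can actually contribute. Since $\mQ_{ij}\neq 0$ only for adjacent nodes, $\mQ_{\B_u,\setV}$ is supported on the columns indexed by $\setV_1:=\{i\in\setV:\dist(i,\B_u)\le 1\}$. Likewise, $\mQ_{\setV,\setW}$ is supported on the rows indexed by $\setV_m':=\{j\in\setV: j \text{ adjacent to some node of } \setW\}$. Every node of $\setW$ has distance greater than $m$ from $\B_u$, and every node of $\setV$ has distance at most $m$. Hence any $j\in\setV_m'$ has $\dist(j,\B_u)=m$, and for $i\in\setV_1$ the triangle inequality gives $\dist(i,j)\ge m-1$. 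Here the distance is taken in $\supp(\mQ)$, or in the support graph of $\mQ_{\setV,\setV}$, which is only larger. The product therefore reduces to
\[
\mQ_{\B_u,\setV_1}\,\bigl[\mQ^{-1}_{\setV,\setV}\bigr]_{\setV_1,\setV_m'}\,\mQ_{\setV_m',\setW}.
\]
Next I bound the outer factors. Each is a submatrix of $\mQ$, so its spectral norm is at most $\|\mQ\|_2=\mu_{\max}$.

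For the middle block $M:=[\mQ^{-1}_{\setV,\setV}]_{\setV_1,\setV_m'}$, Lemma~\ref{thm: decaying inv} applied with $\I=\setV$ gives $|M_{ij}|\le C_1\rho^{\dist(i,j)}\le C_1\rho^{m-1}$ entrywise. I then bound the spectral norm by the Frobenius norm:
\[
\|M\|_2\le\sqrt{|\setV_1||\setV_m'|}\;C_1\rho^{m-1}\le\sqrt{\Delta_1\Delta_m}\;C_1\rho^{m-1},
\]
using $\setV_1\subseteq\setV_{u,1}$ and $\setV_m'\subseteq\setV_{u,m}$.

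It remains to reconcile the constant: I need $\mu_{\max}^2 C_1\le 2\mu_{\max}^2/\mu_{\min}$. Because $\kappa_2\ge1$, the quantity $(1+\sqrt{\kappa_2})^2/(2\kappa_2)=\tfrac12(1+1/\sqrt{\kappa_2})^2\le 2$, so $C_1\le 2/\mu_{\min}$ and the stated bound follows.

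The main obstacle is checking that distances are measured consistently. Lemma~\ref{thm: decaying inv} uses the distance in the full support graph, while the layering by $\setV_{u,m}$ uses distances within $\supp_u(\mQ)$. I would verify that the sets and distances are taken in the same graph, or that one is a lower bound for the other in the right direction. I would also handle the edge case $m=1$, where the exponent is $0$, and the case where $\setW$ or $\setV_m'$ is empty, in which the product vanishes.
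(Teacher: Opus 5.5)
Your proposal is correct and is essentially the paper's argument. Like the paper, you restrict the middle factor to the nodes of $\setV$ adjacent to $\B_u$ and to nodes at distance $m$ (your $\setV_m'$ is a subset of the paper's distance-$m$ layer). You then bound its entries by $C_1\rho^{m-1}$ via Lemma~\ref{thm: decaying inv}, pass to the spectral norm through the Frobenius norm with the factor $\sqrt{\Delta_1\Delta_m}$, bound the outer factors by $\mu_{\max}$, and use $C_1\le 2/\mu_{\min}$.

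Your triangle-inequality justification of $\dist(i,j)\ge m-1$ is slightly more careful than the paper's, which states this as an equality.
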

	\begin{proof}
    Let $\pi: \setV\to \{1,\dots, |\setV|\}$ be the indexing map that assigns to each $i \in \setV$ its corresponding row/column position within the submatrix $\mQ_{\setV,\setV}$.
	Let $\I=\{i\in \setV\mid \dist(i,\B_u)=1\}$ and $\J=\{j\in \setV\mid\dist(j,\B_u)= m\}$. Here, $\I$ is the set of nodes in $\setV$ adjacent to $\B_u$, and $\J$ is the set of nodes at a distance $m$ from $\B_u$. One can write
    $$
        \mQ_{\B_u,\setV}\mQ^{-1}_{\setV,\setV}\mQ_{\setV,\setW}=\mQ_{\B_u,\pi(\I)}\left[\mQ^{-1}_{\setV,\setV}\right]_{\pi(\I),\pi(\J)}\mQ_{\pi(\J),\setW}.
    $$
    Indeed, if $\I$ or $\J$ is empty, then $\mQ_{\B_u,\setV}\mQ^{-1}_{\setV,\setV}\mQ_{\setV,\setW}=0$. Therefore, without loss of generality, we assume that neither set is empty.
    By Lemma~\ref{thm: decaying inv}, for any $i\in \I$ and $j\in \J$, we have
	\begin{align*}
		\left|\left[\mQ^{-1}_{\setV,\setV}\right]_{\pi(i),\pi(j)}\right|\le & C_1\rho^{\dist(i,j)}
        = C_1\rho^{m-1}.
	\end{align*}
	This implies that
    \begin{align*}
	\left\|\left[\mQ^{-1}_{\setV,\setV}\right]_{\pi(\mathcal{I}),\pi(\mathcal{J})} \right\|_{2} &\le \sqrt{|\mathcal{I}||\mathcal{J}|}C_1\rho^{m-1}\leq \sqrt{\Delta_1\Delta_{m}}C_1\rho^{m-1}.
	\end{align*}
The last inequality follows from the fact that $\mathcal{J} \subseteq \setV\subseteq\setV_{u,m}$, which implies $|\mathcal{J}| \le |\setV_{u,m}| \le \Delta_m$. By the same reasoning, we have $|\I|\leq \Delta_1$. 
        Therefore,
		\begin{align*}
			\left\|\mQ_{\B_u,\setV}\mQ^{-1}_{\setV,\setV}\mQ_{\setV,\setW}\right\|_{2}&= \left\|\mQ_{\B_u,\pi(\mathcal{I}) }\left[\mQ^{-1}_{\setV,\setV}\right]_{\pi(\mathcal{I}),\pi(\mathcal{J})}\mQ_{\pi(\mathcal{J}),\setW}\right\|_{2}\\
			&\le \|\mQ_{\B_u,\pi(\mathcal{I}) }\|_2\ \left\|\left[\mQ^{-1}_{\setV,\setV}\right]_{\pi(\mathcal{I}),\pi(\mathcal{J})}\right\|_2\ \|\mQ_{\pi(\mathcal{J}),\setW}\|_{2}\\
			&\le \mu_{\max}^2\ \left\|\left[\mQ^{-1}_{\setV,\setV}\right]_{\pi(\mathcal{I}),\pi(\mathcal{J})}\right\|_2\\
			&\le \mu_{\max}^2 \sqrt{\Delta_1\Delta_{m}} C_1\rho^{m-1}\\
            &= \frac{2\mu_{\max}^2}{\mu_{\min}}\sqrt{\Delta_1\Delta_{m}} \rho^{m-1}.
		\end{align*}
		The second inequality uses the property that $\mu_{\max}$ bounds the spectral norm of every submatrix of $\mQ$. The last inequality follows from the fact that $\frac{(1+\sqrt{\kappa_2})^2}{2\kappa_2}\leq 2$, which implies that $C_1=\frac{1}{\mu_{\min}}\ \max\left\{1,\frac{(1+\sqrt{\kappa_2})^2}{2\kappa_2}\right\}\leq \frac{2}{\mu_{\min}}$.
	\end{proof}\medskip
    The above lemma implies that $\left\|{\mQ_{\B_u,\setV}\mQ^{-1}_{\setV,\setV}\mQ_{\setV,\setW}}\right\|_{2}$ decays nearly exponentially with $m$, provided that $\Delta_1$ and $\Delta_m$ do not grow exponentially. As it turns out, this condition is ensured under the polynomial volume growth (Assumption~\ref{assumption: poly growth of m-deg}). This property will play an important role in establishing our final guarantees.

Armed with the above result, we can now establish that, for any two $m$-similar sparsity patterns $\bs^{(1)}, \bs^{(2)} \in \{0,1\}^{n_u}$, the quadratic and linear coefficients of their corresponding quadratic pieces $p_{u,\bs^{(1)}}$ and $p_{u,\bs^{(2)}}$ are exponentially close to each other.    
	\begin{lemma}\label{lemma: p_s norm bounds}
	Let $\bs^{(1)}, \bs^{(2)} \in \{0,1\}^{n_u}$ be two $m$-similar sparsity patterns, and let $p_{u,\bs^{(1)}}$ and $p_{u,\bs^{(2)}}$ be their corresponding quadratic pieces defined in~\eqref{eq::p_us2}. Then, the following bounds hold:
    \begin{align*}
        \|\mA_{u,\bs^{(1)}}-\mA_{u,\bs^{(2)}}\|_{1,1} &\le 4\kappa_2^4\ (\omega+1)^{3/2} \ {\Delta_1\Delta_m}\ \rho^{2m-2},\\
        \|\vb_{u,\bs^1}-\vb_{u,\bs^2}\|_1 &\le 4\kappa_2^2(1+\kappa_\infty)\ U\ (\omega+1)^{3/2}\  \sqrt{\Delta_1\Delta_m}\  \rho^{m-1}.
    \end{align*}
	\end{lemma}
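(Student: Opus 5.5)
Since $\bs^{(1)}$ and $\bs^{(2)}$ are $m$-similar, they induce the same near set $\setV:=\setV_{u,\bs^{(1)},m}=\setV_{u,\bs^{(2)},m}$ and possibly different far sets $\setW_1,\setW_2$. In both $\mA_{u,\bs}$ and $\vb_{u,\bs}$ the far nodes enter only through $\mQ_{\setV,\setW_k}$ and the Schur complements $S_k:=\mQ_{\J_{u,\bs^{(k)}},\J_{u,\bs^{(k)}}}/\mQ_{\setV,\setV}$. This holds because $\B_u$ is not adjacent to $\setW_k$ (its nodes are at distance $>m\ge 1$), so $\mQ_{\B_u,\setW_k}=0$. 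The plan is to use the block inverse \eqref{eq:Q_inverse} to split each coefficient into a common $\setV$-only part plus a far-field correction. The common part cancels in the difference, and each correction carries the product $M_k:=\mQ_{\B_u,\setV}\mQ^{-1}_{\setV,\setV}\mQ_{\setV,\setW_k}$, which Lemma~\ref{lemma: matrix norm bound} controls.

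\textbf{Quadratic coefficient.} Expanding $\mQ_{\B_u,\J}\mQ_{\J,\J}^{-1}\mQ_{\J,\B_u}$ with $\mQ_{\B_u,\J}=[0,\ \mQ_{\B_u,\setV}]$ gives
\begin{align*}
\mA_{u,\bs^{(k)}}=\mQ_{\B_u,\B_u}-\mQ_{\B_u,\setV}\mQ_{\setV,\setV}^{-1}\mQ_{\setV,\B_u}-M_k S_k^{-1}M_k^\top .
\end{align*}
Hence $\mA_{u,\bs^{(1)}}-\mA_{u,\bs^{(2)}}=M_2S_2^{-1}M_2^\top-M_1S_1^{-1}M_1^\top$. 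The eigenvalues of a Schur complement of a principal submatrix of $\mQ$ lie in $[\mu_{\min},\mu_{\max}]$, so $\|S_k^{-1}\|_2\le 1/\mu_{\min}$. By Lemma~\ref{lemma: matrix norm bound}, each term then has spectral norm at most $\frac{4\mu_{\max}^4}{\mu_{\min}^3}\Delta_1\Delta_m\rho^{2m-2}$. Finally, convert the spectral norm to $\|\cdot\|_{1,1}$ on the $(\tau+1)\times(\tau+1)$ matrix using $\|X\|_{1,1}\le (\omega+1)^{3/2}\|X\|_2$. With the normalization $\mQ_{ii}=1$ we have $\mu_{\min}\le 1\le\mu_{\max}$, so powers of $\mu_{\max}$ and $1/\mu_{\min}$ can be absorbed into powers of $\kappa_2$. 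Collecting constants should give the stated $4\kappa_2^4(\omega+1)^{3/2}\Delta_1\Delta_m\rho^{2m-2}$, up to bookkeeping of the factor $2$.

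\textbf{Linear coefficient.} In the same way,
\begin{align*}
\vb_{u,\bs^{(k)}}=\vc_{\B_u}-\mQ_{\B_u,\setV}\mQ_{\setV,\setV}^{-1}\vc_{\setV}+M_kS_k^{-1}\big(\mQ_{\setW_k,\setV}\mQ_{\setV,\setV}^{-1}\vc_{\setV}-\vc_{\setW_k}\big).
\end{align*}
The first two terms are common and cancel. The remaining vector $r_k:=\vc_{\setW_k}-\mQ_{\setW_k,\setV}\mQ_{\setV,\setV}^{-1}\vc_\setV$ is not small. Bounding it by $\|\vc\|_2$ would introduce a dimension dependence, which is what the target bound must avoid.

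\textbf{Main obstacle: controlling $r_k$.} The target contains $U(1+\kappa_\infty)$, which suggests rewriting $r_k$ through the unconstrained solution rather than through $\vc$. Write $\vy_k:=S_k^{-1}r_k$. This is exactly (up to sign) the $\setW_k$-block of $\mQ_{\J,\J}^{-1}\vc_\J$, i.e.\ the minimizer coordinates on far nodes with $\B_u$ fixed at zero. I would then bound $\|M_k\vy_k\|$ by splitting it as $M_k\vy_k=\mQ_{\B_u,\setV}\mQ_{\setV,\setV}^{-1}(\mQ_{\setV,\setW_k}\vy_k)$. The factor $\mQ_{\setV,\setW_k}\vy_k$ involves only the boundary rows $\J$, which have at most $\Delta_m$ entries. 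Each entry is bounded by $\|\mQ\|_\infty\|\vy_k\|_\infty$, and $\|\vy_k\|_\infty\le\|\mQ^{-1}\|_\infty\|\vc\|_\infty$-type quantities, which are bounded by $U$ in the manner of Lemma~\ref{lemma: U}. This yields the factors $\kappa_\infty$ and $U$; the "$1+$" in $(1+\kappa_\infty)$ comes from the direct $\vc_{\setW_k}$ contribution. Combining with the decay of the block $[\mQ^{-1}_{\setV,\setV}]_{\I,\J}$ (Lemma~\ref{thm: decaying inv}, as in the proof of Lemma~\ref{lemma: matrix norm bound}) gives $\sqrt{\Delta_1\Delta_m}\rho^{m-1}$ times a $\kappa_2^2$ factor. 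The final step is the conversion to $\ell_1$ on $\tau+1\le\omega+1$ coordinates, using $\|\cdot\|_1\le(\omega+1)^{1/2}\|\cdot\|_2$ (with the extra $(\omega+1)$ absorbed from the row-wise bounds), and then the triangle inequality over $k=1,2$ for the factor $2$.

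I expect this $\vb$ step, getting an $\ell_\infty$ bound on $\vy_k$ without paying $\sqrt{n}$, to be the delicate part. I would first try to realize $\vy_k$ as part of a solution of a structured subproblem so that the argument of Lemma~\ref{lemma: U} applies verbatim.
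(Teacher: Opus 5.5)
\textbf{The $\mA$ bound.} This part is the paper's argument: block inverse \eqref{eq:Q_inverse}, cancellation of the $\setV$-only term, $\|S_k^{-1}\|_2\le 1/\mu_{\min}$, Lemma~\ref{lemma: matrix norm bound}, and $\|X\|_{1,1}\le(\omega+1)^{3/2}\|X\|_2$. Settle the factor $2$ you leave open by positivity, not the triangle inequality. (The paper uses the triangle inequality, which literally gives $8$, and then records $4$.) Both $M_kS_k^{-1}M_k^\top$ are positive semidefinite, so $-M_1S_1^{-1}M_1^\top\preceq \mA_{u,\bs^{(1)}}-\mA_{u,\bs^{(2)}}\preceq M_2S_2^{-1}M_2^\top$. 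Hence $\|\mA_{u,\bs^{(1)}}-\mA_{u,\bs^{(2)}}\|_2\le\max_k\|M_k\|_2^2\|S_k^{-1}\|_2\le 4\mu_{\max}^4\mu_{\min}^{-3}\Delta_1\Delta_m\rho^{2m-2}\le 4\kappa_2^4\Delta_1\Delta_m\rho^{2m-2}$, which is exactly the stated bound.

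\textbf{The $\vb$ bound: a different route.} The paper bounds three factors separately:
\begin{itemize}
\item $\|r_k\|_\infty\le\|\vc\|_\infty+U\|\mQ\|_\infty$;
\item $\|S_k^{-1}\|_\infty\le\|\mQ^{-1}\|_\infty$;
\item $\|\mQ_{\setW_k,\setV}\mQ^{-1}_{\setV,\setV}\mQ_{\setV,\B_u}\|_\infty\le\sqrt{\omega+1}\,\|M_k\|_2$, followed by Lemma~\ref{lemma: matrix norm bound}.
\end{itemize}
That is where the stated constant comes from: $(\|\vc\|_\infty+U\|\mQ\|_\infty)\|\mQ^{-1}\|_\infty\le U(1+\kappa_\infty)$, and $\kappa_2^2\sqrt{\Delta_1\Delta_m}$ is inherited verbatim. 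Your identification $\vy_k=S_k^{-1}r_k=\bigl[\mQ^{-1}_{\J_{u,\bs^{(k)}},\J_{u,\bs^{(k)}}}\vc_{\J_{u,\bs^{(k)}}}\bigr]_{\setW_k}$ is correct, in fact with no sign change. It gives $\|\vy_k\|_\infty\le U$ directly from the proof of Lemma~\ref{lemma: U}, which bounds $\|\mQ^{-1}_{\J,\J}\|_\infty\|\vc\|_\infty$ for every index set. So the step you flagged as delicate is fine. It is also cleaner than the paper's, which needs $\|S_k^{-1}\|_\infty\le\|\mQ^{-1}\|_\infty$. Unlike its spectral analogue, that inequality does not follow from interlacing, since $S_k^{-1}$ is a block of $\mQ^{-1}_{\J_{u,\bs^{(k)}},\J_{u,\bs^{(k)}}}$, not of $\mQ^{-1}$.

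\textbf{The gap: the stated constant does not follow.} Your last sentences assert the constant rather than derive it. Let $\I,\J$ be the distance-$1$ and distance-$m$ layers from the proof of Lemma~\ref{lemma: matrix norm bound}. Then $M_k\vy_k=\mQ_{\B_u,\I}[\mQ^{-1}_{\setV,\setV}]_{\I,\J}(\mQ_{\J,\setW_k}\vy_k)$. The last vector has up to $|\J|\le\Delta_m$ entries, each bounded only by $\|\mQ\|_\infty U$. Hence $\|M_k\vy_k\|_2\le \mu_{\max}C_1\sqrt{\Delta_1\Delta_m}\,\rho^{m-1}\sqrt{\Delta_m}\,\|\mQ\|_\infty U$. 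With $\|\mQ\|_\infty\le\kappa_\infty\mu_{\min}$, this gives $\|\vb_{u,\bs^{(1)}}-\vb_{u,\bs^{(2)}}\|_1\le 4\kappa_2\kappa_\infty\sqrt{\omega+1}\,\sqrt{\Delta_1}\,\Delta_m\,U\rho^{m-1}$. That is incomparable with the stated bound: it has $\sqrt{\Delta_1}\,\Delta_m$ in place of $\sqrt{\Delta_1\Delta_m}$. Your ``$1+$ from the direct $\vc_{\setW_k}$ contribution'' also has no counterpart in your decomposition, since $\vc_{\setW_k}$ is absorbed into $\vy_k$.

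\textbf{What to do instead.} State and prove this variant. Under Assumption~\ref{assumption: poly growth of m-deg} it only turns the $\bar b$ prefactor $\delta m^{\gamma/2}$ into $\delta^{3/2}m^{\gamma}$ in the proof of Lemma~\ref{lemma::irrelevant-iden}, so only the constants in \eqref{eq::m-lb} change. Do not try to recover $\sqrt{\Delta_1\Delta_m}$ by copying the paper's factorization. For the row vector $(S_k^{-1}r_k)^\top N_k$ with $N_k=\mQ_{\setW_k,\setV}\mQ^{-1}_{\setV,\setV}\mQ_{\setV,\B_u}$, the $\ell_\infty$ norm is governed by the largest column sum of $N_k$. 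That sum runs over the $|\setW_k|$ rows and is not controlled by $\sqrt{\omega+1}\,\|N_k\|_2$, which bounds only row sums. This is exactly where the boundary-layer structure you exploit is needed.
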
    

	\begin{proof}
		For each $\bs^{(i)}$ with $i\in\{1,2\}$, consider the sets $\setV_{u,\bs^{(i)},m} := \J_{u,\bs^{(i)}} \cap \setV_{u,m}$ and $\setW_{u,\bs^{(i)},m} :=  \J_{u,\bs^{(i)}}\setminus \setV_{u,m}.$
        Since $\bs^{(1)}$ and $\bs^{(2)}$ are $m$-similar, it follows that $\setV_{u,\bs^{(1)},m}=\setV_{u,\bs^{(2)},m}$. For notational convenience, we drop the subscripts and write $\setV := \setV_{u,\bs^{(1)},m}=\setV_{u,\bs^{(2)},m}$, $\setW^{(1)} := \setW_{u,\bs^{(1)},m}$, and $\setW^{(2)} := \setW_{u,\bs^{(2)},m}$. Combining the definition of $p_{u,\bs^{(1)}}$ and $p_{u,\bs^{(2)}}$ from \eqref{eq::p_us2} with the block structure of $\mQ^{-1}_{\J_{u,\bs^{(i)}},\J_{u,\bs^{(i)}}}$ in \eqref{eq:Q_inverse}, it follows that
		\begin{align*}
			\mA_{u,\bs^{(1)}}-\mA_{u,\bs^{(2)}}=&-\mQ_{\B_u,\setV}\mQ^{-1}_{\setV,\setV} \mQ_{\setV,\setW^{(1)}}(\mQ_{\J_{u,\bs^{(1)}},\J_{u,\bs^{(1)}}}/\mQ_{\setV,\setV})^{-1}\mQ_{\setW^{(1)},\setV}\mQ^{-1}_{\setV,\setV} \mQ_{\setV,\B_u}\\
			&+\mQ_{\B_u,\setV}\mQ^{-1}_{\setV,\setV} \mQ_{\setV,\setW^{(2)}}(\mQ_{\J_{u,\bs^{(2)}},\J_{u,\bs^{(2)}}}/\mQ_{\setV,\setV})^{-1}\mQ_{\setW^{(2)},\setV}\mQ^{-1}_{\setV,\setV} \mQ_{\setV,\B_u},
		\end{align*}
        where we use the fact that $\mQ_{\B_u,\J_{u,\bs^{(i)}}} = \left[\mQ_{\B_u,\setW^{(i)}}\ \ \mQ_{\B_u,\setV}\right]$ and $\mQ_{\B_u,\setW^{(i)}}=0$. The above equality yields
        \begin{align*}
            \|\mA_{u,\bs^{(1)}}-\mA_{u,\bs^{(2)}}\|_2 \le&\left\|\mQ_{\B_u,\setV}\mQ^{-1}_{\setV,\setV} \mQ_{\setV,\setW^{(1)}}(\mQ_{\J_{u,\bs^{(1)}},\J_{u,\bs^{(1)}}}/\mQ_{\setV,\setV})^{-1}\mQ_{\setW^{(1)},\setV}\mQ^{-1}_{\setV,\setV} \mQ_{\setV,\B_u}\right\|_2\\
				&+\left\|\mQ_{\B_u,\setV}\mQ^{-1}_{\setV,\setV} \mQ_{\setV,\setW^{(2)}}(\mQ_{\J_{u,\bs^{(2)}},\J_{u,\bs^{(2)}}}/\mQ_{\setV,\setV})^{-1}\mQ_{\setW^{(2)},\setV}\mQ^{-1}_{\setV,\setV} \mQ_{\setV,\B_u}\right\|_2\\
                \leq & \left\|\mQ_{\B_u,\setV}\mQ^{-1}_{\setV,\setV} \mQ_{\setV,\setW^{(1)}}\right\|^2_2\ \left\|{(\mQ_{\J_{u,\bs^{(1)}},\J_{u,\bs^{(1)}}}/\mQ_{\setV,\setV})^{-1}}\right\|_2\\
                &+\left\|\mQ_{\B_u,\setV}\mQ^{-1}_{\setV,\setV} \mQ_{\setV,\setW^{(2)}}\right\|^2_2\ \left\|{(\mQ_{\J_{u,\bs^{(2)}},\J_{u,\bs^{(2)}}}/\mQ_{\setV,\setV})^{-1}}\right\|_2.
        \end{align*}
		  From Lemma~\ref{lemma: matrix norm bound}, we have  
          \begin{align*}
              \max\left\{\left\|\mQ_{\B_u,\setV}\mQ^{-1}_{\setV,\setV} \mQ_{\setV,\setW^{(1)}}\right\|_2, \left\|\mQ_{\B_u,\setV}\mQ^{-1}_{\setV,\setV} \mQ_{\setV,\setW^{(2)}}\right\|_2\right\}\leq \frac{2\mu_{\max}^2}{\mu_{\min}}\sqrt{\Delta_1\Delta_{m}} \rho^{m-1}.
          \end{align*}
          On the other hand, 
          \begin{equation}\label{eq::bound_Schur}
          \begin{aligned}
              \frac{1}{\mu_{\min}(\mQ)}=\mu_{\max}\left(\mQ^{-1}\right) = \max_{\vzeta\not=\vzero}\left\{\frac{\vzeta^\top \mQ^{-1}\vzeta}{\vzeta^\top \vzeta}\right\}&\geq \max_{\bar \vzeta\not=\vzero}\left\{\frac{\bar \vzeta^\top \left[\mQ^{-1}\right]_{\setW^{(i)}, \setW^{(i)}}\bar \vzeta}{\bar \vzeta^\top \bar \vzeta}\right\}\\&=\left\|{(\mQ_{\J_{u,\bs^{(i)}},\J_{u,\bs^{(i)}}}/\mQ_{\setV,\setV})^{-1}}\right\|_2.
        \end{aligned}
          \end{equation}
        Combining the above inequalities, we obtain
        \begin{align*}    
        \|\mA_{u,\bs^{(1)}}-\mA_{u,\bs^{(2)}}\|_2\leq \frac{4\mu_{\max}^4}{\mu^3_{\min}}\ \Delta_1\Delta_m\ \rho^{2m-2}\leq 4\kappa_2^4\ \Delta_1\Delta_m\ \rho^{2m-2},
        \end{align*}
        where the last inequality follows from the fact that $\mu_{\min}\leq 1$, and hence, $\mu_{\max}\leq \kappa_2$.
		 The proof of the first statement is completed after noting that $\|\mA_{\bs^1}-\mA_{\bs^2}\|_{1,1} \le (\omega+1)^{3/2}\|\mA_{u,s^{(1)}}-\mA_{u,s^{(2)}}\|_2.$
         
Next, we provide the proof of the second statement. Again, from \eqref{eq::p_us2} and the block structure of $\mQ^{-1}_{\J_{u,\bs^{(i)}},\J_{u,\bs^{(i)}}}$ in \eqref{eq:Q_inverse}, we have
        \begin{align*}
        \vb_{u,\bs^{(1)}}-\vb_{u,\bs^{(2)}}
        =& 
           -\big(\vc_{\setW^{(1)}}-\mQ_{\setW^{(1)},\setV}\mQ^{-1}_{\setV,\setV}\vc_\setV\big)^\top \left(\mQ_{\J_{u,\bs^{(1)}},\J_{u,\bs^{(1)}}}/\mQ_{\setV,\setV}\right)^{-1} \mQ_{\setW^{(1)},\setV}\mQ^{-1}_{\setV,\setV} \mQ_{\setV,\B_u}\\
           &+\big(\vc_{\setW^{(2)}}-\mQ_{\setW^{(2)},\setV}\mQ^{-1}_{\setV,\setV}\vc_\setV\big)^\top \left(\mQ_{\J_{u,\bs^{(2)}},\J_{u,\bs^{(2)}}}/\mQ_{\setV,\setV}\right)^{-1} \mQ_{\setW^{(2)},\setV}\mQ^{-1}_{\setV,\setV} \mQ_{\setV,\B_u},
        \end{align*}
        which implies
		\begin{align*}
				&\|\vb_{u,\bs^{(1)}}-\vb_{u,\bs^{(2)}}\|_{\infty}\\			
                &\leq \norm*{\vc_{\setW^{(1)}}-\mQ_{\setW^{(1)},\setV}\mQ^{-1}_{\setV,\setV}\vc_\setV}_\infty\ 
				\norm*{\left(\mQ_{\J_{u,\bs^{(1)}},\J_{u,\bs^{(1)}}}/\mQ_{\setV,\setV}\right)^{-1}}_{\infty}\ 
				 \norm*{  \mQ_{\setW^{(1)},\setV}\mQ^{-1}_{\setV,\setV} \mQ_{\setV,\B_u}}_\infty\\
                 &\ \ +\norm*{\vc_{\setW^{(2)}}-\mQ_{\setW^{(2)},\setV}\mQ^{-1}_{\setV,\setV}\vc_\setV}_\infty\ 
				\norm*{\left(\mQ_{\J_{u,\bs^{(2)}},\J_{u,\bs^{(2)}}}/\mQ_{\setV,\setV}\right)^{-1}}_{\infty}\ 
				 \norm*{  \mQ_{\setW^{(2)},\setV}\mQ^{-1}_{\setV,\setV}\mQ_{\setV,\B_u}}_\infty.
		\end{align*}
        We now control each term on the right-hand side separately.
		For any $i=1,2$, we have
		$$\norm*{\vc_{\setW^{(i)}}-\mQ_{\setW^{(i)},\setV}\mQ^{-1}_{\setV,\setV}\vc_\setV}_\infty\le \|\vc\|_{\infty}+\|\mQ\|_{\infty}\left\|\mQ^{-1}_{\setV,\setV}\right\|_\infty\left\|\vc_\setV\right\|_{\infty}\le\|\vc\|_{\infty}+U\ \|\mQ\|_{\infty}.$$
        The first inequality follows from the sub-multiplicative property of the induced $\infty$-norm and the fact that $\left\|\mQ_{\setW^{(i)},\setV}\right\|_\infty\leq \left\|\mQ\right\|_\infty$. The second inequality follows from $\left\|\mQ^{-1}_{\setV,\setV}\right\|_\infty\left\|\vc_\setV\right\|_{\infty}\leq U$ (see the proof of Lemma~\ref{lemma: U}).
        Similarly, from \eqref{eq:Q_inverse}, we obtain
		  $$\norm*{\left(\mQ_{\J_{u,\bs^{(i)}},\J_{u,\bs^{(i)}}}/\mQ_{\setV,\setV}\right)^{-1}}_{\infty}= \left\|\left[\mQ^{-1}\right]_{\setW^{(i)}, \setW^{(i)}}\right\|_\infty\leq \norm*{\mQ^{-1} }_{\infty}.$$
		Finally, 
		$$ \norm*{ \mQ_{\setW^{(i)},\setV}\mQ^{-1}_{\setV,\setV} \mQ_{\setV,\B_u}}_\infty\le \sqrt{\omega+1}\left\|  \mQ_{\setW^{(i)},\setV}\mQ^{-1}_{\setV,\setV} \mQ_{\setV,\B_u}\right\|_2\le  \sqrt{\omega+1}\  \frac{2\mu_{\max}^2}{\mu_{\min}}\sqrt{\Delta_1\Delta_{m}} \rho^{m-1},$$
		where in the last inequality, we use Lemma~\ref{lemma: matrix norm bound}. Combining these bounds, we obtain
        \begin{align*}
            \|\vb_{u,\bs^{(1)}}-\vb_{u,\bs^{(2)}}\|_{\infty}&\le 2\left(\|\vc\|_{\infty}+U\ \|\mQ\|_{\infty}\right)\  \norm*{\mQ^{-1} }_{\infty}\  \sqrt{\omega+1}\ \frac{2\mu_{\max}^2}{\mu_{\min}}\sqrt{\Delta_1\Delta_{m}} \rho^{m-1}\\
            &\leq 4\kappa_2^2(1+\kappa_\infty)U\  \sqrt{\omega+1}\  \sqrt{\Delta_1\Delta_{m}} \rho^{m-1},
        \end{align*}
        where the second inequality follows from $\norm*{\mQ^{-1} }_{\infty}\norm*{\vc}_{\infty}\leq U$, $\kappa_\infty = \norm*{\mQ}_{\infty}\norm*{\mQ^{-1} }_{\infty}$, and $\frac{\mu_{\max}^2}{\mu_{\min}}\leq \kappa_2^2$.
        The proof is completed after noting that $\|\vb_{u,\bs^{(1)}}-\vb_{u,\bs^{(2)}}\|_{1}\leq (\omega+1)\|\vb_{u,\bs^{(1)}}-\vb_{u,\bs^{(2)}}\|_{\infty}.$
	\end{proof}\medskip
Our next objective is to analyze the term $d_{u,\bs^{(1)}} - d_{u,\bs^{(2)}}$ for a pair of $m$-similar sparsity patterns $\bs^{(1)}$ and $\bs^{(2)}$. From the expression of $d_{u,\bs}$ in~\eqref{eq::p_us2}, one observes that, unlike the quadratic and linear coefficients $\mA_{u,\bs}$ and $\vb_{u,\bs}$, the constant term $d_{u,\bs}$ \textit{does not} depend on any submatrix of $\mQ$ associated with the bag $\B_u$. Consequently, contrary to the quadratic and linear terms, Lemma~\ref{lemma: matrix norm bound} cannot be used to establish a decaying behavior for $|d_{u,\bs^{(1)}} - d_{u,\bs^{(2)}}|$.
This observation is precisely what enables our pruning strategy to be effective: as established in Lemma~\ref{lemma: p_s norm bounds}, both $\|\mA_{u,\bs^{(1)}} - \mA_{u,\bs^{(2)}}\|_{1,1}$ and $\|\vb_{u,\bs^{(1)}} - \vb_{u,\bs^{(2)}}\|_1$ decay exponentially fast in $m$ (under polynomial growth condition in Assumption~\ref{assumption: poly growth of m-deg}), whereas $|d_{u,\bs^{(1)}} - d_{u,\bs^{(2)}}|$ does not exhibit such decay. Therefore, by invoking Lemma~\ref{lemma: roots bound}, we conclude that for any pair of $m$-similar sparsity patterns $\bs^{(1)}$ and $\bs^{(2)}$, the quantity $L(p_{u,\bs^{(1)}}, p_{u,\bs^{(2)}})$ grows exponentially with $m$. As a result, at least one of the functions $p_{u,\bs^{(1)}}$ or $p_{u,\bs^{(2)}}$ becomes irrelevant for sufficiently large $m$. Our next lemma formalizes this intuition.

\begin{lemma}\label{lemma::irrelevant-iden}
Let $\bs^{(1)}, \bs^{(2)} \in \{0,1\}^{n_u}$ be two $m$-similar sparsity patterns, and let $p_{u,\bs^{(1)}}$ and $p_{u,\bs^{(2)}}$ denote their corresponding quadratic pieces defined in~\eqref{eq::p_us2}. Assume that the polynomial volume growth condition (Assumption~\ref{assumption: poly growth of m-deg}) holds, and that $|d_{u,\bs^{(1)}} - d_{u,\bs^{(2)}}| \ge \eta$, for some $0<\eta\leq 1$. Suppose
\begin{align}\label{eq::m-lb}
    m \ge \max\left\{ \frac{2\log\left(8 U^2/\eta\right)+2\log\left(\delta(\omega+1)^{3/2}\right)+2\log\left(\kappa_2^2(1+\kappa_\infty)\right)}{\log(1/\rho)}+2,\, \frac{2\gamma}{\log(1/\rho)}, \left(\frac{\gamma}{\log(1/\rho)}\right)^2 \right\}.
\end{align}
Then, $L(\bs^{(1)}, \bs^{(2)}) \ge U$; in particular, at least one of $\bs^{(1)}$ or $\bs^{(2)}$ is irrelevant.
\end{lemma}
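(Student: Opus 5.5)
We plan to combine Lemma~\ref{lemma: p_s norm bounds} with Lemma~\ref{lemma: roots bound}. The idea is to set $\bar a := \tfrac12 \cdot 4\kappa_2^4(\omega+1)^{3/2}\Delta_1\Delta_m\rho^{2m-2}$, $\bar b := 4\kappa_2^2(1+\kappa_\infty)U(\omega+1)^{3/2}\sqrt{\Delta_1\Delta_m}\rho^{m-1}$ and $\bar d:=\eta$, which are admissible choices in Lemma~\ref{lemma: roots bound}. It then suffices to show $L(p_{u,\bs^{(1)}},p_{u,\bs^{(2)}})\ge U$. In that case $p_{u,\bs^{(1)}}-p_{u,\bs^{(2)}}$ has no root in the interior of $\mathcal D_u$, so it keeps a constant sign there and the larger piece is irrelevant.

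The first step is a sufficient condition for $L\ge U$ that avoids the square root. When $\bar a\neq 0$, the quantity $L$ is the positive root of $\bar a t^2+\bar b t-\bar d=0$. Hence $L\ge U$ holds whenever $\bar a U^2+\bar b U\le \bar d=\eta$. The same conclusion covers the case $\bar a=0$, where $L=\bar d/\bar b$. So it is enough to show $\bar a U^2\le \eta/2$ and $\bar b U\le \eta/2$. Taking $U\ge 1$ without loss of generality (otherwise enlarge $U$), and using $\rho^{2m-2}\le\rho^{m-1}$ and $\kappa_2^4\le \kappa_2^4(1+\kappa_\infty)^2$, both conditions are implied by a single condition:
\[
4U^2\kappa_2^4(1+\kappa_\infty)^2(\omega+1)^{3/2}\,\Delta_1\Delta_m\,\rho^{m-1}\le \eta/2 .
\]
The constants here are deliberately loose, since I expect the stated bound on $m$ in \eqref{eq::m-lb} to have slack. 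Next we invoke Assumption~\ref{assumption: poly growth of m-deg}, which gives $\Delta_1\Delta_m\le \delta^2 m^\gamma$. After taking logarithms, the requirement becomes
\[
(m-1)\log(1/\rho)\ \ge\ \log(8U^2/\eta)+2\log\bigl(\kappa_2^2(1+\kappa_\infty)\bigr)+\log\bigl(\delta^2(\omega+1)^{3/2}\bigr)+\gamma\log m .
\]

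The main obstacle is absorbing the $\gamma\log m$ term, and this is what the last two entries of the maximum in \eqref{eq::m-lb} are for. The plan is to split the left side in half. The first half, $\tfrac12(m-2)\log(1/\rho)$, dominates all the constant logarithms by the first entry of \eqref{eq::m-lb}, whose factor $2$ and additive $+2$ are designed for exactly this. The second half, $\tfrac12 m\log(1/\rho)$, should dominate $\gamma\log m$. For this we use $\log m\le\sqrt m$, which gives $\gamma\log m\le\gamma\sqrt m$. The inequality $\gamma\sqrt m\le \tfrac12 m\log(1/\rho)$ then follows from $\sqrt m\ge 2\gamma/\log(1/\rho)$. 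This is supplied, up to a constant, by the conditions $m\ge(\gamma/\log(1/\rho))^2$ and $m\ge 2\gamma/\log(1/\rho)$. If the constants do not line up exactly, we instead combine these two conditions through $\log m\le m/e$, or through the bound $\log m \le \log(\gamma/\log(1/\rho))+\ldots$.

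Finally, the pieces combine as follows: the displayed condition holds, so $\bar aU^2+\bar bU\le\eta$, hence $L\ge U$, and the sign argument at $\valpha=\vzero$ shows that one of the two patterns is irrelevant.
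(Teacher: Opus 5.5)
Your first reduction is sound, and sharper than the paper's, which only uses $L\ge\bar d/(\bar b+2\sqrt{\bar a\bar d})$. Since $L$ is the positive root of $\bar a t^2+\bar b t-\bar d$, $L\ge U$ is equivalent to $\bar aU^2+\bar bU\le\eta$.

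The proof breaks where you merge the two terms into one loosened condition and rely on slack in \eqref{eq::m-lb}. There is none:
\begin{itemize}
\item The factor $2$ and the $+2$ in the first entry give exactly $\tfrac{m-2}{2}\log(1/\rho)\ge B:=\log(8U^2/\eta)+\log\bigl(\delta(\omega+1)^{3/2}\bigr)+\log\bigl(\kappa_2^2(1+\kappa_\infty)\bigr)$.
\item The last two entries give, through the paper's Claim with $\Gamma=\log(1/\rho)/\gamma$, exactly $\tfrac m2\log(1/\rho)\ge\tfrac\gamma2\log m$.
\end{itemize}
So \eqref{eq::m-lb} certifies $(m-1)\log(1/\rho)\ge B+\tfrac\gamma2\log m$ and nothing more.

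Your displayed requirement is $(m-1)\log(1/\rho)\ge B+\log\bigl(\kappa_2^2(1+\kappa_\infty)\delta\bigr)+\gamma\log m$. This fails on two counts:
\begin{itemize}
\item Your constant logarithms exceed $B$ by $\log(\kappa_2^2(1+\kappa_\infty)\delta)$. So the claim that the first half ``dominates all the constant logarithms'' is false.
\item The term $\gamma\log m$ cannot be absorbed by $\tfrac m2\log(1/\rho)$ under the given hypotheses. The $\log m\le\sqrt m$ route needs $m\ge 4\gamma^2/\log^2(1/\rho)$, four times the third entry. The fallback $\log m\le m/e$ needs $\log(1/\rho)\ge 2\gamma/e$.
\end{itemize}
Concretely, take $\kappa_2=\kappa_\infty=9$ (so $\rho=1/2$), $\delta=3$, $\omega=\gamma=2$, $U=\eta=1$. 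Then $m=36$ satisfies \eqref{eq::m-lb}, yet $35\log 2\approx 24.3$ while your right-hand side is about $26.5$.

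The fix is to keep the two terms separate. After $\Delta_1\Delta_m\le\delta^2m^\gamma$, the condition $\bar bU\le\eta/2$ reads $8U^2\kappa_2^2(1+\kappa_\infty)(\omega+1)^{3/2}\delta\,m^{\gamma/2}\rho^{m-1}\le\eta$. Its logarithm is exactly the certified inequality above; it is the paper's condition $Km^{-\gamma/2}\rho^{-m+1}\ge U$. The quadratic term then follows from it: $\sqrt{2\bar a}\,U\le 2\kappa_2^2(\omega+1)^{3/4}\delta\,m^{\gamma/2}\rho^{m-1}U\le \eta/\bigl(4U(1+\kappa_\infty)(\omega+1)^{3/4}\bigr)\le\sqrt\eta$ once $4U(1+\kappa_\infty)\ge 1$. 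This gives $\bar aU^2\le\eta/2$.

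That mild lower bound on $U$ is the same normalization the paper uses implicitly when it writes $\kappa_2^2\le\kappa_2^2(1+\kappa_\infty)U$. Your ``without loss of generality $U\ge 1$, otherwise enlarge $U$'' is not a valid reduction, because $U$ appears in \eqref{eq::m-lb}; you never actually use it, though. With this change your argument becomes a correct and slightly cleaner variant of the paper's.
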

Before proving the above lemma, we first present the following auxiliary claim, which will play an important role in its proof.
\begin{claim}\label{claim}
            Suppose $\Gamma>0$. Then, $\Gamma m\geq \log(m)$ for any $m\geq \max\{1,2/\Gamma,(1/\Gamma)^2\}$.
        \end{claim}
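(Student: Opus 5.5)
Fix $\Gamma>0$ and set $\psi(m):=\Gamma m-\log m$ on $m\ge 1$. The plan is to use that $\psi$ is convex with $\psi'(m)=\Gamma-1/m$, so $\psi$ is nondecreasing on $[\max\{1,1/\Gamma\},\infty)$. It then suffices to find a single threshold $m_0\le \max\{1,2/\Gamma,(1/\Gamma)^2\}$ with $m_0\ge \max\{1,1/\Gamma\}$ and $\psi(m_0)\ge 0$; monotonicity carries the inequality to all larger $m$. We split into two cases according to the size of $\Gamma$.

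\emph{Case $\Gamma\ge 1$.} Here $\max\{1,2/\Gamma,(1/\Gamma)^2\}=\max\{1,2/\Gamma\}\ge 1$. Using $\log m\le m-1<m\le \Gamma m$ for all $m\ge 1$, the inequality holds for every $m\ge1$. This case needs nothing else.

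\emph{Case $0<\Gamma<1$.} Now $(1/\Gamma)^2>1/\Gamma>1$, so the required range contains only $m\ge \max\{2/\Gamma,1/\Gamma^2\}$, which lies in the monotone region. The key step is an elementary bound on $\log m$ that is dominated by $\Gamma m$ once $m\ge 1/\Gamma^2$. I would use $\log m\le \sqrt m$ for all $m\ge1$. This follows since $t\mapsto \sqrt t-\log t$ has derivative $\frac{1}{2\sqrt t}-\frac1t$, so its minimum is at $t=4$, where the value is $2-\log 4>0$, and its value at $t=1$ is $1>0$. Then for $m\ge 1/\Gamma^2$ we have $\sqrt m\ge 1/\Gamma$, hence $\Gamma m=\Gamma\sqrt m\cdot\sqrt m\ge \sqrt m\ge \log m$. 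This gives the claim directly, even without the monotonicity argument and without the $2/\Gamma$ term.

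The inequality $\log m\le\sqrt m$ is the only nontrivial step; if one prefers to use the $2/\Gamma$ condition instead, an alternative is $\log m\le m/e$. However, that only handles $\Gamma\ge 1/e$, so the $\sqrt m$ route through the $(1/\Gamma)^2$ term is the one I would commit to. The final write-up simply combines the two cases.
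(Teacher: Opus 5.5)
Your proof is correct. It uses the same case split as the paper: $\Gamma\ge 1$ is handled by $\log m\le m\le \Gamma m$, and $0<\Gamma<1$ is handled through the $(1/\Gamma)^2$ condition. What differs is how the key step in the second case is organized.

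The paper rescales. It writes $m=\xi/\Gamma$ with $\xi\ge\max\{2,1/\Gamma\}$, so that $\Gamma m-\log m=\xi-\log\xi-\log(1/\Gamma)$. It then applies $\xi\ge 2\log\xi$ followed by $\log\xi\ge\log(1/\Gamma)$. In effect it proves
$$\log m=\log(\Gamma m)+\log(1/\Gamma)\le 2\log(\Gamma m)\le\Gamma m.$$

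You apply the same elementary inequality $y\ge 2\log y$, but at $y=\sqrt m$ rather than $y=\Gamma m$; that is exactly your bound $\log m\le\sqrt m$. You then use $\sqrt m\ge 1/\Gamma$, giving the chain $\log m\le\sqrt m\le\Gamma m$ with no change of variables.

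Your version is slightly shorter and makes it transparent that only $m\ge 1/\Gamma^2$ matters. The paper invokes $\xi\ge 2$ (i.e., $m\ge 2/\Gamma$) to justify $\xi\ge 2\log\xi$. That inequality actually holds for every $\xi>0$, since $\xi-2\log\xi$ is minimized at $\xi=2$ with value $2-2\log 2>0$. So your observation that the $2/\Gamma$ term is inessential applies to the paper's argument as well.

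The convexity and monotonicity preamble in your write-up is never used, and you can drop it.
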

        \begin{proof}
            First, suppose that $\Gamma\geq 1$. Then, it is easy to verify that $\Gamma m\geq m\geq \log(m)$ for $m>0$. Now, consider the setting where $0<\Gamma<1$. Define $m = \xi/\Gamma$, where $\xi\geq \max\{2,1/\Gamma\}$. Then, we have
\begin{align*}
    \Gamma m-\log(m)=\xi-\log(\xi)-\log(1/\Gamma)\geq \log(\xi)-\log(1/\Gamma)\geq 0,
\end{align*}
where the first inequality follows from the fact that $\xi\geq 2\log(\xi)$ for $\xi\geq 2$, and the last inequality follows from the fact that $\xi\geq \max\{2,1/\Gamma\}\geq 1/\Gamma$. 
        \end{proof}\medskip
\begin{proof}{Proof of Lemma~\ref{lemma::irrelevant-iden}.}
Under the polynomial growth condition, we have $\Delta_1\leq \delta$ and $\Delta_m\leq \delta m^\gamma$. Combined with Lemma~\ref{lemma: p_s norm bounds}, this implies that
    \begin{align*}
        \|\mA_{u,\bs^{(1)}}-\mA_{u,\bs^{(2)}}\|_{1,1} &\le 4\kappa_2^4\ (\omega+1)^{3/2} \ \delta^2m^\gamma\ \rho^{2m-2}\\
        \|\vb_{u,\bs^1}-\vb_{u,\bs^2}\|_1 &\le 4\kappa_2^2(1+\kappa_\infty)\ U\ (\omega+1)^{3/2}\  \delta m^{\gamma/2}\  \rho^{m-1}
    \end{align*}
		Invoking Lemma~\ref{lemma: roots bound} with $\bar a = 4\kappa_2^4\ (\omega+1)^{3/2} \ \delta^2m^\gamma\ \rho^{2m-2}>0$, $\bar b = 4\kappa_2^2(1+\kappa_\infty)\ U\ (\omega+1)^{3/2}\  \delta m^{\gamma/2}\  \rho^{m-1}$, and $\bar d=\eta$, we obtain
		\begin{align*}
			L(\bs^{(1)}, \bs^{(2)})&=\frac{-\bar b+\sqrt{\bar{b}^2+4\bar{a}\bar{d}}}{2\bar{a}}=\frac{4\bar{a}\bar{d}}{2\bar{a}\left(\bar b+\sqrt{\bar{b}^2+4\bar{a}\bar{d}}\right)}\geq \frac{\bar{d}}{\sqrt{\bar{b}^2+4\bar{a}\bar{d}}}\geq \frac{\bar{d}}{\bar{b}+2\sqrt{\bar{a}\bar{d}}}.
		\end{align*}
        Substituting the expressions for $\bar a$, $\bar b$, and $\bar d$, we arrive at
        \begin{align*}
            \frac{\bar{d}}{\bar{b}+2\sqrt{\bar{a}\bar{d}}}&\geq\frac{\eta}{4\kappa_2^2(1+\kappa_\infty)\ U\ (\omega+1)^{3/2}\  \delta m^{\gamma/2}\  \rho^{m-1}+4\sqrt{\eta}\ \kappa_2^2\  (\omega+1)^{3/4}\  \delta m^{\gamma/2}\  \rho^{m-1}}\\
            &=\frac{\eta}{4\delta(\omega+1)^{3/2} \left(\kappa_2^2(1+\kappa_\infty)U+\sqrt{\eta}\ \kappa_2^2\  (\omega+1)^{-3/4}\right)}\  m^{-\gamma/2}\  \rho^{-m+1}\\
            &\geq K m^{-\gamma/2}\rho^{-m+1}, \quad \text{where}\quad K:=\frac{\eta}{8U\  \delta(\omega+1)^{3/2}\  \kappa_2^2(1+\kappa_\infty)}.
        \end{align*}
        In the last inequality, we use the facts that $0<\eta\leq 1$ and $(\omega+1)^{-3/4}\leq 1$, which implies that $\sqrt{\eta}(\omega+1)^{-3/4}\leq 1$. Moreover, we use the fact that $\kappa_2^2\leq \kappa_2^2(1+\kappa_\infty)U$.
        To complete the proof, it suffices to establish that $Km^{-\gamma/2}\rho^{- m+1}\geq U$. 
        To this end, we first note that
		\begin{align*}
			Km^{-\gamma/2}\rho^{-m+1}\ge U&\Longleftrightarrow
			\log \left(m^{-\gamma/2}\rho^{-m+1}\right)\ge\log \left(U/K\right)\\
			&\Longleftrightarrow -\log(m)+ (m- 1) \frac{2\log\left(1/\rho\right)}{\gamma}\ge \frac{2\log \left(U/K\right)}{\gamma}\\
			&\Longleftrightarrow -\log(m)+ m\frac{2\log \left(1/\rho\right)}{\gamma}\ge \frac{2\log \left(U/K\right)}{\gamma}+\frac{2\log(1/\rho)}{\gamma}.
		\end{align*}
        Let $\Gamma:=\frac{\log\left( 1/\rho \right)}{\gamma}$. By our assumption, $m\geq \max\{1,2/\Gamma, (1/\Gamma)^2\}$. Therefore, Claim \ref{claim} implies that $\Gamma m\geq \log(m)$, which leads to
		\begin{align*}
			 -\log(m)+ m\frac{2\log \left(1/\rho\right)}{\gamma}\ge 2\frac{\log \left(U/K\right)}{\gamma}+2\frac{\log(1/\rho)}{\gamma}\Longleftarrow&\   m\frac{\log \left(1/\rho\right)}{\gamma}\ge 2\frac{\log \left(U/K\right)}{\gamma}+2\frac{\log(1/\rho)}{\gamma}\\
			\Longleftarrow&\   m\ge \frac{2\log \left(U/K\right)}{\log(1/\rho)}+2.
		\end{align*}
        Given the definition of $K$, one can verify that the last inequality is implied by \eqref{eq::m-lb}.
		
    \end{proof}\medskip
To leverage the above lemma and obtain a global bound on the maximum number of quadratic pieces retained for each local parametric cost after pruning, we introduce the notion of the $(k,\eta)$-margin. Throughout our subsequent arguments, we fix $m$ to be any quantity satisfying~\eqref{eq::m-lb}. For any $u \in \{1,\dots,n\}$ and any $\vxi \in \{0,1\}^{|\setV_{u,m}|}$, define $\C_{u,\vxi} := \{ \bs \in \{0,1\}^{n_u} : \bs_{\setV_{u,m}} = \vxi \}$ as the set of all $m$-similar sparsity patterns whose entries within the $m$-neighborhood of bag $\B_u$ in $\supp_u(\mQ)$ coincide with $\vxi$. Equivalently, $\C_{u,\vxi}$ is the equivalence class induced by the relation $\bs \sim \bs'$ if and only if $\bs_{\setV_{u,m}} = \bs'_{\setV_{u,m}}$. Recall that $|\setV_{u,m}| \leq \Delta_{m}$; hence, there are at most $2^{\Delta_{m}}$ such equivalence classes.
\begin{definition}[$\eta$-optimal sets and $(k,\eta)$-margin]
    Given any $\eta \geq 0$, $u \in \{1,\dots,n\}$, and $\vxi \in \{0,1\}^{|\setV_{u,m}|}$, the $\eta$-optimal set of the equivalence class $\C_{u,\vxi}$ is defined as
    \begin{align*} 
    \mathcal{R}_{u,\vxi,\eta} := \left\{\bs\in \C_{{u,\vxi}}: d_{u,\bs}-\min_{\vz\in \C_{{u,\vxi}}} \{d_{u,\vz}\}\leq \eta\right\}.
    \end{align*}
    We say that the problem has $(k,\eta)$-margin if 
    $\left| \mathcal{R}_{u,\vxi,\eta} \right| \leq k$
    holds for every $u \in \{1,\dots,n\}$ and $\vxi \in \{0,1\}^{|\setV_{u,m-1}|}$.
\end{definition}
To build intuition behind the notions of $\eta$-optimal sets and $(k,\eta)$-margin, one can show that, from its definition \eqref{eq::p_us2}, $d_{u,\bs}$ coincides with the optimal value of the following quadratic program
\begin{align*}
    d_{u,\bs}=\min_{\vx \in \R^{|\J_{u,\bs}|}}
    \left\{
        \tfrac{1}{2} \vx^\top \mQ_{\J_{u,\bs},\J_{u,\bs}} \vx
        + \vc_{\J_{u,\bs}}^\top \vx
        + \sum_{i \in \J_{u,\bs}} \vlambda_i
    \right\}.
\end{align*}
This value is obtained from the subproblem~\eqref{eq: f_u tree} defined on $\supp_u(\mQ)$ after setting $\vx_{\B_u}=\bm{0}$ and fixing the binary vector $\vz \in \{0,1\}^{n_u}$ according to the sparsity pattern $\bs$, namely
$\vz_i = 1$ for $i \in \J_{u,\bs}$ and $\vz_i = 0$ for $i \notin \J_{u,\bs}$.
Consequently, $\min_{\vz \in \C_{u,\vxi}} \{d_{u,\vz}\}$ represents the optimal value among all such restricted subproblems whose sparsity patterns belong to the equivalence class $\C_{u,\vxi}$. The associated $\eta$-optimal set $\mathcal{R}_{u,\vxi,\eta}$ then consists of those sparsity patterns whose corresponding subproblem values lie within $\eta$ of this minimum. In other words, $\mathcal{R}_{u,\vxi,\eta}$ captures all binary assignments in the equivalence class $\C_{u,\vxi}$ that are nearly optimal (with an optimality gap of $\eta$) for the local subproblem at node $u$. For instance, if $\lvert \mathcal{R}_{u,\vxi,\eta} \rvert = 1$, then the corresponding subproblem admits a unique optimal sparsity pattern from $\C_{u,\vxi}$ with an $\eta$-margin. This observation motivates the definition of the $(k,\eta)$-margin: the problem is said to have $(k,\eta)$-margin if, for every bag $\B_u$ and every equivalence class $\C_{u,\vxi}$, the associated $\eta$-optimal set has cardinality at most $k$; that is, there are at most $k$ solutions whose corresponding optimal values lie within an $\eta$-margin of the minimum.

This assumption serves as a structural ``margin'' condition that controls the multiplicity of nearly indistinguishable discrete solutions. Conceptually, this is analogous to margin conditions in statistical learning---such as the classical Tsybakov low-noise assumption~\citep{tsybakov2004optimal} and its refinements~\citep{diakonikolas2024near}---which limit the mass of points lying near the decision boundary. Our framework adapts this idea by imposing a bound on the size of the $\eta$-optimal set. 

Indeed, the effectiveness of the proposed pruning strategy critically depends on the parameters of the $(k,\eta)$\nobreakdash-margin. According to Lemma~\ref{lemma::irrelevant-iden}, the pruning strategy guarantees the identification of irrelevant pieces once they fall outside the $\eta$-margin. Consequently, within each equivalence class $\C_{u,\vxi}$, at most $k$ equations remain relevant after pruning. Since there are at most $2^{|\setV_{u,m}|}$ equivalence classes, the total number of equations retained after the pruning step is at most $k\  2^{|\setV_{u,m}|}$. This is formalized in the following lemma. 
\begin{lemma}\label{lemma::pruned-set}
    Suppose that Problem~\eqref{eq: MIQP} has $(k,\eta)$-margin and that the polynomial growth condition in Assumption~\ref{assumption: poly growth of m-deg} holds. For every $u \in \{1,\dots,n-1\}$, let $\mathcal{P}_{u+1}$ denote the pruned index set obtained after the pruning step (Line~\ref{algstep::prune} of Algorithm~\ref{alg: para algo}). Then, $\mathcal{P}_{u+1}$ satisfies $|\mathcal{P}_{u+1}| \le k\  2^{|\setV_{u+1,m}|}$, where $m$ is the smallest integer satisfying~\eqref{eq::m-lb}.
\end{lemma}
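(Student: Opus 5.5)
The plan is to partition the surviving index set $\mathcal{P}_{u+1}$ according to the equivalence classes $\C_{u+1,\vxi}$, $\vxi\in\{0,1\}^{|\setV_{u+1,m}|}$. There are at most $2^{|\setV_{u+1,m}|}$ classes, so it suffices to show that within each class at most $k$ indices survive \texttt{PRUNE}. Fix a class $\C:=\C_{u+1,\vxi}$ and let $\bs^\ast\in\argmin_{\bs\in\C} d_{u+1,\bs}$.

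The main step is to show that every $\bs\in\C\setminus\mathcal{R}_{u+1,\vxi,\eta}$ is removed. Any two members of $\C$ are $m$-similar by construction. Take such an $\bs$ and compare it with any surviving $\bs'$ with $d_{u+1,\bs'}\le d_{u+1,\bs}-\eta$; one such candidate is $\bs^\ast$. Then $|d_{u+1,\bs}-d_{u+1,\bs'}|\ge\eta$, and $m$ satisfies~\eqref{eq::m-lb}, so Lemma~\ref{lemma::irrelevant-iden} gives $L(p_{u+1,\bs},p_{u+1,\bs'})\ge U$. We would need the strict inequality $>U$ used in \texttt{PRUNE}; this is fixed by taking $m$ one larger or by reading the test as $\ge$.

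With $L$ at least $U$, the difference $p_{u+1,\bs}-p_{u+1,\bs'}$ has no root in $\mathcal{D}_{u+1}$ by Lemma~\ref{lemma: roots bound}, so its sign on $\mathcal{D}_{u+1}$ is the sign at $\vzero$. That sign is $d_{u+1,\bs}-d_{u+1,\bs'}>0$. Hence when \texttt{PRUNE} compares this pair, it deletes $\bs$ and not $\bs'$.

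The delicate point is the order of the list in \texttt{PRUNE}: $\bs$ must actually meet a dominating partner. I will argue by contradiction using the $\eta$-optimal set and transitivity. Suppose $\bs\notin\mathcal{R}$ ends in $\mathcal{S}^{\trim}$. When $\bs$ was selected as pivot, it was compared with every remaining element of $\mathcal{S}$. Consider the element of smallest $d$ in $\C$ that was still alive at that moment, or had earlier been accepted into $\mathcal{S}^{\trim}$.

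If this element $\bs''$ was accepted earlier, then while $\bs''$ was the pivot it compared with $\bs$, which was still in $\mathcal{S}$. Since $d_{u+1,\bs''}\le d_{u+1,\bs^\ast}$ is not guaranteed, I compare directly instead: $\bs''$ would have deleted $\bs$ whenever $d_{u+1,\bs''}\le d_{u+1,\bs}-\eta$.

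So I need an element of $\C$ with $d$-value at most $d_{u+1,\bs}-\eta$ that survives until it meets $\bs$. I will take the minimizer $\bs^\ast$ itself. The minimizer can never be deleted by an element of its own class, because every deletion inside a class (where $L\ge U$) removes the larger $d$. So $\bs^\ast$ is either already in $\mathcal{S}^{\trim}$ or still in $\mathcal{S}$ when $\bs$ is processed. In either case the two were compared, and $\bs$ lost.

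Pieces from other classes may delete members of $\C$ but never add survivors, so they only help. I expect the main obstacle to be handling the boundary ties ($d$-difference exactly $\eta$, and $L=U$) cleanly.

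Thus the surviving members of each class lie in $\mathcal{R}_{u+1,\vxi,\eta}$, which has size at most $k$ by the $(k,\eta)$-margin. Summing over the classes gives $|\mathcal{P}_{u+1}|\le k\,2^{|\setV_{u+1,m}|}$.
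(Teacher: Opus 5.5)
Your route is the paper's: partition by the classes $\C_{u+1,\vxi}$, apply Lemma~\ref{lemma::irrelevant-iden} inside a class, then invoke the margin. The paper simply asserts that every pattern outside $\setR_{u+1,\vxi,\eta}$ is removed.

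\textbf{First gap: the list order.} Your attempt to handle the list order fails at ``so $\bs^\ast$ is either already in $\mathcal{S}^{\trim}$ or still in $\mathcal{S}$.'' You only excluded deletion of $\bs^\ast$ by members of its own class. A piece from another class can pass the test against $\bs^\ast$ and delete it before $\bs$ is processed, and then $\bs$ has lost its witness. So ``other classes only help'' is precisely the unproved step.

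The missing idea is that the test is transitive. With $L$ computed from the exact coefficient differences, $L$ is the positive root of $\bar a t^2+\bar b t-\bar d$, so
\[
L(p_i,p_j)>U \iff \tfrac12\|\mA_i-\mA_j\|_{1,1}U^2+\|\vb_i-\vb_j\|_1U<|d_i-d_j|,
\]
and the left side is a norm of $(\mA_i-\mA_j,\vb_i-\vb_j)$. By the triangle inequality, ``$p_i$ beats $p_j$'' (the test passes and $d_i<d_j$) is a strict partial order. Consequently \texttt{PRUNE} returns exactly the unbeaten pieces of its input, regardless of order. A beaten piece is beaten by some unbeaten piece, which is never deleted and never exits its loop early, so the two are compared unless the beaten piece is already gone.

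This also removes your tie concern without enlarging $m$, which would in any case weaken the stated bound. The proof of Lemma~\ref{lemma::irrelevant-iden} gives $L\ge U$ with $\bar d=\eta$ and its coefficient bounds, i.e.\ $\bar aU^2+\bar bU\le\eta$. Meanwhile $\bs\notin\setR_{u+1,\vxi,\eta}$ means $d_{u+1,\bs}-d_{u+1,\bs^\ast}>\eta$ strictly. Hence $\bs^\ast$ beats $\bs$ under the strict test.

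\textbf{Second gap: $\bs^\ast$ may be absent.} The paper's proof shares this gap, since it also reasons over the full class. The input to \texttt{PRUNE} at stage $u+1$ contains only patterns whose restrictions to each $\J_v$, $v\in\parent_{\textsf{T}}(u+1)$, survived pruning at stage $v$. So $\bs^\ast$, the minimizer over all of $\C_{u+1,\vxi}$, need not be in the list at all. Without it, the argument only shows that surviving members of $\C_{u+1,\vxi}$ have pairwise $d$-gaps at most $\eta$, i.e., they sit in some window of width $\eta$. The $(k,\eta)$-margin controls only the window that starts at the class minimum. Given transitivity, it would suffice to show that $\bs^\ast$ is present, or is beaten (at stage $u+1$) by some piece that is present. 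Failing that, one needs a margin condition that bounds every width-$\eta$ window of $d$-values within a class.
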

    \begin{proof}
Choose any $\vxi \in \{0,1\}^{|\setV_{u+1,m}|}$ and consider the corresponding equivalence class $\C_{u+1,\vxi}$, whose elements are $m$-similar by definition. 
    By Lemma~\ref{lemma::irrelevant-iden}, for any two sparsity patterns $\bs^{(1)},\bs^{(2)} \in \C_{u+1,\vxi}$ with $|d_{u+1,\bs^{(1)}} - d_{u+1,\bs^{(2)}}| \ge \eta$, at least one of the two patterns is irrelevant and is therefore removed by the pruning step. Consequently, after pruning, the only sparsity patterns in $\C_{u+1,\vxi}$ retained are those whose constant term $d_{u+1,\bs}$ lies within $\eta$ of the minimum over the class, namely those in
    $$
        \setR_{u+1,\vxi,\eta}
        = \Bigl\{ \bs \in \C_{u+1,\vxi} :
        d_{u+1,\bs} - \min_{\vz \in \C_{u+1,\vxi}} d_{u+1,\vz} \le \eta \Bigr\}.
    $$
    By the $(k,\eta)$-margin assumption, we have $|\setR_{u+1,\vxi,\eta}| \le k$ for every $\vxi$, and thus at most $k$ sparsity patterns remain in each equivalence class after pruning.
    Each $\vxi \in \{0,1\}^{|\setV_{u+1,m}|}$ defines a unique equivalence class $\C_{u+1,\vxi}$, and hence there are $2^{|\setV_{u+1,m}|}$ such classes in total. Summing over all classes yields $|\mathcal{P}_{u+1}| \le k \  2^{|\setV_{u+1,m}|},$ which proves the statement.
	\end{proof}\medskip
Equipped with the above lemma, we are now ready to provide our main theorem on the correctness and runtime of Algorithm \ref{alg: para algo}.
\begin{theorem}\label{thm:main}
    Suppose that Problem~\eqref{eq: MIQP} has $(k,\eta)$-margin and that the polynomial growth condition in Assumption~\ref{assumption: poly growth of m-deg} holds. Then, the proposed parametric algorithm (Algorithm \ref{alg: para algo}) solves Problem~\eqref{eq: MIQP} in $\mathcal{O}(n\omega^2\  \delta \  k^{2\delta}\  4^{\Delta_{m+1} })$ time and $\mathcal{O}(n\omega^2\  k^\delta\  2^{\Delta_{m+1}})$ memory, where
    \begin{align*}
    m\! =\!\! \left\lceil \! \max\!\left\{ \frac{2\log\left(8U^2/\eta\right)+2\log\left(\delta(\omega+1)^{3/2}\right)+2\log\left(\kappa_2^2(1+\kappa_\infty)\right)}{\log(1/\rho)}+2,\, \frac{2\gamma}{\log(1/\rho)}, \left(\frac{\gamma}{\log(1/\rho)}\right)^2\! \right\}\!\right\rceil\!.
\end{align*}
\end{theorem}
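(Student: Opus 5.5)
Correctness and complexity are handled separately, following the template of Proposition~\ref{prop::dp}.

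\emph{Correctness.} By Lemma~\ref{lemma: U}, every optimal solution lies in $\mathcal{D}=\{\vx:\norm{\vx}_\infty\le U\}$, so it suffices that each pruned $f_u$ agrees with the true local parametric cost on $\mathcal{D}_u$. \texttt{PRUNE} removes a piece $p_j$ only when $L(p_i,p_j)>U$ and $p_i(\bm 0)<p_j(\bm 0)$. By Lemma~\ref{lemma: roots bound}, $p_i-p_j$ has no root in $\mathcal{D}_u$, and $\mathcal{D}_u$ is connected, so $p_j>p_i$ on all of $\mathcal{D}_u$. Hence only irrelevant pieces are removed. Induction on $u$ via Lemma~\ref{lemma: fu recursion} then shows the pruned $f_{u+1}$ equals $f_{u+1}$ on $\mathcal{D}_{u+1}$. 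This uses that $g_v$ on $\mathcal{D}$ depends only on $f_v$ on $\mathcal{D}_v$, which holds if we interpret the minimization over $\vx_v$ as restricted to $[-U,U]$, or else argue that the optimal $\vx_v$ lies in that box. The backward pass then recovers an optimal $(\vx^\star,\vz^\star)$ as in Proposition~\ref{prop::dp}. I expect this restriction to the box to be the delicate point of the correctness argument.

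\emph{Counting pieces.} Lemma~\ref{lemma::pruned-set} gives at most $k\,2^{|\setV_{v,m}|}$ pieces in each pruned $f_v$, hence at most $2k\,2^{|\setV_{v,m}|}$ pieces in each $g_v$. Before pruning, $f_{u+1}$ is formed by choosing one piece from each $g_v-\phi_v$ with $v\in\parent_{\textsf{T}}(u+1)$, and $|\parent_{\textsf{T}}(u+1)|\le\Delta_1\le\delta$. So the unpruned count is $N_{u+1}\le\prod_v 2k\,2^{|\setV_{v,m}|}$. The key step is to bound $\sum_{v}|\setV_{v,m}|$ by $\Delta_{m+1}$, up to contributions from at most $\delta$ nodes. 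Each parent bag $\B_v$ lies within distance one of $\B_{u+1}$, since they share $\omega$ nodes and $v$ is adjacent to $\B_{u+1}$. The subgraphs $\supp_v(\mQ)$ for distinct parents are disjoint apart from $\B_{u+1}$, by the running intersection property. Therefore the sets $\setV_{v,m}\cup\{v\}$ are disjoint subsets of $\setV_{u+1,m+1}$, and $\sum_v|\setV_{v,m}|\le\Delta_{m+1}$. This yields $N_{u+1}=\mathcal{O}(k^{\delta}2^{\Delta_{m+1}})$, absorbing the factor $2^\delta$ into constants. This bookkeeping is the main obstacle; I would verify it carefully using the labeling and the balanced decomposition.

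\emph{Cost per iteration.} Building $f_{u+1}$ costs $\mathcal{O}(\delta\omega^2 N_{u+1})$ time. \texttt{PRUNE} costs $\mathcal{O}(\omega^2N_{u+1}^2)=\mathcal{O}(\omega^2k^{2\delta}4^{\Delta_{m+1}})$ time and $\mathcal{O}(\omega^2 k^\delta 2^{\Delta_{m+1}})$ memory. Computing $g_u$ and the backward pass are dominated by these. Summing over the $n$ iterations, and adding the $\mathcal{O}(n\omega^2)$ labeling cost, gives the stated $\mathcal{O}(n\omega^2\delta k^{2\delta}4^{\Delta_{m+1}})$ time and $\mathcal{O}(n\omega^2k^\delta2^{\Delta_{m+1}})$ memory. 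Here $m$ is the ceiling of the bound in~\eqref{eq::m-lb}, so that Lemma~\ref{lemma::irrelevant-iden} applies.
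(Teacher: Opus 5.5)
\emph{Overall.} Your complexity part is essentially the paper's. The correctness part has a genuine gap, and it sits exactly at the point you flag.

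\emph{The invariant does not propagate.} You want to carry by induction the claim that the pruned $f_u$ agrees with the true $f_u$ on all of $\mathcal{D}_u$. This is not preserved by the step $f_v\mapsto g_v$ of Lemma~\ref{lemma: fu recursion}.

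\texttt{PRUNE} only certifies that a deleted piece lies above its dominator on $\mathcal{D}_v$. But $g_v$ minimizes over $\vx_v\in\R$. For boundary values $\valpha_{\B_v\setminus v}$ inside the box, the conditional minimizer can have $|\vx_v|>U$, which is exactly where the deleted piece may be the smallest.

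This really happens. Take the path $1$--$2$--$3$ with unit diagonal, off-diagonal entries $\tfrac12$, $\vc=(0,1,0)^\top$ and $\vlambda=(0.17,0.1,10)^\top$.
\begin{itemize}
\item The optimum is $\vx^\star=(0,-1,0)^\top$ with value $-0.4$, so $U=1$ is a valid bound on $\|\vx^\star\|_\infty$. That is all any correctness argument is entitled to use.
\item The two pieces of $f_2$ differ by $0.17-\tfrac18\valpha_2^2$, and $L=\sqrt{1.36}>1$. So \texttt{PRUNE} deletes the piece with $\vz_1=1$.
\item Yet the true $f_3(1)=-0.73$ is attained with $\vz_1=1$ and $\vx_2=-2$, while the pruned $f_3(1)=-0.525$.
\end{itemize}

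Neither of your repairs works.
\begin{itemize}
\item Restricting the inner minimization to $[-U,U]$ is not what Algorithm~\ref{alg: para algo} does. It would also destroy the min-of-quadratics representation, since box-constrained minimization creates boundary regimes that are not a pointwise minimum of quadratics.
\item Lemma~\ref{lemma: U} and its proof control only $\mQ_{\J,\J}^{-1}\vc_\J$, i.e.\ minimizers with zero boundary values. They say nothing about conditional minimizers for arbitrary boundary values in $\mathcal{D}$, which carry the extra term $\mQ_{\J,\J}^{-1}\mQ_{\J,\B}\valpha$.
\end{itemize}

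\emph{The missing idea.} You never need $f_u$ to be correct on the whole box, only at the single point $\vx^\star_{\B_u}$ of an optimal solution. The paper argues by contradiction, as follows.
\begin{itemize}
\item Let $u$ be the smallest index with $\vz^\star_{\J_u}\notin\mathcal{P}_u$. By minimality every parent restriction $\vz^\star_{\J_v}$ survived, so the piece $p_{u,\vz^\star_{\J_u}}$ was generated and then deleted.
\item Hence some retained $p_{u,\bar\bs}$ is strictly smaller on $\mathcal{D}_u$. If the dominator is itself deleted later in \texttt{PRUNE}, dominance on $\mathcal{D}_u$ is transitive and the chain ends at a retained piece. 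In particular $p_{u,\bar\bs}$ is strictly smaller at $\vx^\star_{\B_u}\in\mathcal{D}_u$.
\item Since $\mQ_{\setA,\J_u}=\vzero$ for $\setA$ the complement of $\J_u\cup\B_u$, the objective at $(\vx^\star,\vz^\star)$ splits. It is a term depending only on variables in $\setA\cup\B_u$, plus $p_{u,\vz^\star_{\J_u}}(\vx^\star_{\B_u})$.
\item So replacing $\vz^\star_{\J_u}$ by $\bar\bs$, and re-optimizing $\vx_{\J_u}$, strictly lowers the objective. This contradicts optimality.
\end{itemize}
Thus $\vz^\star_{\J_n}\in\mathcal{P}_n$. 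Since every retained piece corresponds to a feasible solution, the final minimization returns exactly $f^\star$.

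Your observation that a deleted piece is dominated on all of $\mathcal{D}_u$ (via Lemma~\ref{lemma: roots bound} and connectedness) is precisely the ingredient this argument needs. Lemma~\ref{lemma: U} enters only to place $\vx^\star_{\B_u}$ in the box.

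\emph{Counting.} Your piece count and per-iteration costs match the paper's. Two small remarks:
\begin{itemize}
\item Your disjointness claim already gives $\sum_{v}(|\setV_{v,m}|+1)\le\Delta_{m+1}$, so no $2^{\delta}$ factor needs absorbing.
\item The claim that each $v\in\parent_{\textsf{T}}(u+1)$ is adjacent to $\B_{u+1}$ is asserted rather than derived. A balanced decomposition does not force it. The paper uses the same identification $\setV_{u+1,m+1}=\bigcup_{v}(\setV_{v,m}\cup\{v\})$ without proof.
\end{itemize}
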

\begin{proof}
    We first prove the correctness of the algorithm, then analyze its time and memory complexities.

		\paragraph{Correctness proof.}
        Let $(\vx^\star,\vz^\star)$ be an optimal solution of Problem~\eqref{eq: MIQP}, and let $f^\star$ be its objective value. To establish correctness, it suffices to show that $\vz^\star_{\J_n}$ belongs to the pruned set $\mathcal{P}_n$ of the local parametric cost $f_n$. 
        The proof proceeds by contradiction. 
        Suppose that $\vz^\star_{\J_n}\notin\mathcal{P}_{n}$. Then there exists an index $u\le n$ at which the sparsity pattern of the optimal solution restricted to nodes in $\J_u$, namely $\vz^\star_{\J_u}$, is discarded by the pruning step. Let $u$ denote the smallest index for which $\vz^\star_{\J_u}\notin\mathcal{P}_{u}$. Since $\vz^\star_{\J_u}$ is discarded by the pruning step, the corresponding piece $p_{u,\vz^\star_{\J_u}}$ is irrelevant by definition of the pruning procedure. Therefore, there exists $\bar \bs\in\mathcal{P}_u$ such that $p_{u,\vz^\star_{\J_u}}(\valpha)>p_{u,\bar \bs}(\valpha)$ for all $\norm*{\valpha}_{\infty}<U$. 
        
        To arrive at a contradiction, we evaluate the objective of Problem~\eqref{eq: MIQP} at the optimal solution $(\vx^\star,\vz^\star)$. Before doing so, we partition the nodes of $\supp(\mQ)$ into three disjoint sets: $\setA=\{1,\dots,n\}\backslash(\J_u\cup\B_u)$, $\B_u$, and $\J_u$. From the properties of a tree decomposition, there is no edge between nodes in $\setA$ and $\J_u$ in $\supp(\mQ)$; hence $\mQ_{\setA,\J_u}=\mQ_{\J_u,\setA}^\top=\vzero$.
        With respect to this partition, $\mQ$ and $\vc$ take the block form:
    	\begin{align*}
    			\mQ= \begin{bmatrix}
    				\mQ_{\setA,\setA}&\mQ_{\setA,\B_u}& \vzero\\
    				\mQ_{\B_u,\setA}&\mQ_{\B_u,\B_u}& \mQ_{\B_u,\J_u}\\
    				\vzero	&\mQ_{\J_u,\B_u}&\mQ_{\J_u,\J_u}
    				\end{bmatrix},\quad \vc=\begin{bmatrix}
    					\vc_\setA\\
    					\vc_{\B_u}\\
    					\vc_{\J_u}
    		\end{bmatrix}.
    	\end{align*}
        
        Evaluating the objective function of Problem~\eqref{eq: MIQP} at the optimal solution yields: 
        \begin{align*}
		f^{\star}&\!=\frac{1}{2} (\vx^{\star})^\top \mQ \vx^{\star}+\vc^\top \vx^{\star}+\vlambda^\top \vz^\star\\
		&\!=\frac{1}{2} (\vx^{\star}_{\setA})^\top \mQ_{\setA,\setA} \vx^{\star}_{\setA}+\vc_{\setA}^\top \vx^{\star}_{\setA}+\sum_{i\in \setA}\lambda_i\vz^{\star}_i+\vx^{\star}_{\setA}Q_{\setA,\B_u}\vx^{\star}_{\B_u}+\sum_{i\in \B_u}\lambda_i\vz^\star_i+p_{u,\vz^\star_{\J_u}}(\vx^{\star}_{\B_u})\\
        &\!>\frac{1}{2} (\vx^{\star}_{\setA})^\top \mQ_{\setA,\setA} \vx^{\star}_{\setA}+\vc_{\setA}^\top \vx^{\star}_{\setA}+\sum_{i\in \setA}\lambda_i\vz^{\star}_i+\vx^{\star}_{\setA}Q_{\setA,\B_u}\vx^{\star}_{\B_u}+\sum_{i\in \B_u}\lambda_i\vz^\star_i+p_{u,\bar \bs}(\vx^{\star}_{\B_u}).
	\end{align*}
    This shows that the vector $\bar \vz = \left[\vz^\star_{\setA}\ \vz^\star_{\B_u} \ \bar\bs\right]^\top$ is a strictly better choice for the binary variables, thereby contradicting the optimality of $(\vx^\star,\vz^\star)$. 
\paragraph{Complexity proof.}
To analyze the runtime, we examine each step of the algorithm. 
The topological ordering and the subsequent labeling can be carried out in $\mathcal{O}(n)$ and $\mathcal{O}(n\omega^2)$ time, respectively, and in $\mathcal{O}(n\omega)$ memory (see Section~\ref{sec: tree-decomposition}). Initializing $f_1$ requires $\mathcal{O}(\omega^2)$ time and memory (Line~\ref{line::setup-f1}).

Next, we analyze the complexity of the first \texttt{for} loop (Lines~\ref{line:for-begin}--\ref{line:for-end}). For each $u=1,\dots,n-1$, the function $g_u$ can be obtained by separately minimizing each piece of $f_u$ with and without the indicator variable, with a total cost of $\mathcal{O}(|\mathcal{P}_{u}|\omega^2)$ time and memory. On the other hand, by Lemma~\ref{lemma::pruned-set}, we have $|\mathcal{P}_u|\leq k\  2^{|\setV_{u,m}|}$. This implies that the cost of computing $g_u$ can be bounded by $\mathcal{O}(\omega^2\  k\  2^{|\setV_{u,m}|}) = \mathcal{O}(\omega^2\  k\  2^{\Delta_{m}})$.
Moreover, given $\{g_v\}_{v\in \parent_T(u+1)}$, the function $f_{u+1}$ in Line~\ref{line::compute-f-u+1} can be computed according to~\eqref{eq: f from g}, which we rewrite here:
\begin{align*}
    f_{u+1}(\valpha_{\B_{u+1}})
    &= h_{u+1}(\valpha_{\B_{u+1}})
    + \sum_{v\in\parent_T(u+1)} \Bigl(g_v(\valpha_{\B_{u+1}\setminus v})-\phi_v(\valpha_{\B_{u+1}\backslash v})\Bigr).
\end{align*}
As established in the runtime analysis of Algorithm~\ref{alg: direct algo}, $h_{u+1}$ and $\{\phi_v:v\in\parent_{\textsf{T}}(u+1)\}$ are single-piece quadratic functions and can be computed in $\mathcal{O}(\omega^2)$ time and memory.

 Next, we turn to the computation of $g_v$. Since the number of pieces of $g_v$ is at most twice that of the corresponding $f_v$, each $g_v$ contains at most $2k \  2^{|\setV_{v,m}|}$ pieces. Consequently, $f_{u+1}$ can be constructed by considering all combinations of the pieces of $g_v-\phi_v$ for $v \in \parent_T(u+1)$, leading to at most
\begin{align*} 
\prod_{v\in \parent_T({u+1})} 2k\  2^{|\setV_{v,m}|} = k^{|\parent_{T}(u+1)|}\  2^{\sum_{v\in \parent_T(u+1)}(|\setV_{v,m}|+1)} 
\end{align*}
pieces.
Indeed, we have $|\parent_T(u+1)|\le \Delta_1\le \delta$. Moreover, by definition of $\setV_{u+1,m+1}$, we have $\setV_{u+1,m+1} = \bigcup_{v\in\parent_{\textsf{T}}(u+1)}(\setV_{v,m}\cup\{v\})$. Since the sets $\{\setV_{v,m}\cup\{v\}\}_{v\in \parent_{\textsf{T}}(u+1)}$ are disjoint, it follows that $|\setV_{u+1,m+1}| = \sum_{v\in \parent_{\textsf{T}}(u+1)} (|\setV_{v,m}|+1)$.
This implies that $f_{u+1}$, before pruning, can have at most $k^\delta\  2^{|\setV_{u+1,m+1}|}$ pieces, and can be formed in $\mathcal{O}(\delta\ \omega^2\  k^\delta\  2^{|\setV_{u+1,m+1}|})$ time and $\mathcal{O}(\omega^2\  k^\delta\  2^{|\setV_{u+1,m+1}|})$ memory.
Finally, Line~\ref{algstep::prune} invokes the pruning subroutine \texttt{PRUNE}, which runs in $\mathcal{O}\left(\omega^2\  \left(k^\delta\  2^{|\setV_{u+1,m+1}|}\right)^2\right) = \mathcal{O}\left(\omega^2\  k^{2\delta}\  4^{\Delta_{m+1}}\right)$ time and $\mathcal{O}\left(\omega^2\  k^\delta\  2^{|\setV_{u+1,m+1}|}\right)= \mathcal{O}\left(\omega^2\  k^{\delta}\  2^{\Delta_{m+1}}\right)$ memory, as discussed in Section~\ref{sec: Parametric algorithm}. Since the first \texttt{for} loop runs for $n-1$ iterations, it incurs a total cost of $\mathcal{O}(n\omega^2\  \delta \  k^{2\delta}\  4^{\Delta_{m+1} })$ time and $\mathcal{O}(n\omega^2\  k^\delta\  2^{\Delta_{m+1}})$ memory.

Finally, obtaining $f^\star$, $\vx^\star_n$, and $\vz^\star_n$ in Line~\ref{line::x-n}, and each iteration of the second \texttt{for} loop (Lines~\ref{line::for2-begin}--\ref{line::for2-end}), can be carried out in $\mathcal{O}(\omega^2\  2^{\Delta_{m}})$ time and memory, and we omit the details for brevity.
Combining all these steps, we conclude that the algorithm runs in $\mathcal{O}(n\omega^2\ \delta\  k^{2\delta}\  4^{\Delta_{m+1}})$ time and $\mathcal{O}(n\omega^2\  k^\delta\  2^{\Delta_{m+1}})$ memory.
\end{proof}\medskip

To provide further insight into Theorem~\ref{thm:main}, we focus on special classes of problems whose sparsity graph $\supp(\mQ)$ exhibits \textit{linear volume growth}, i.e., Assumption~\ref{assumption: poly growth of m-deg} holds with $\gamma=1$, as our complexity bound takes a more crisp form in this setting. 
\begin{corollary}\label{cor::banded}
    Suppose that Problem~\eqref{eq: MIQP} has $(k,\eta)$-margin and satisfies linear volume growth (Assumption~\ref{assumption: poly growth of m-deg} with $\gamma=1$). Then, the proposed parametric algorithm (Algorithm~\ref{alg: para algo}) solves Problem~\eqref{eq: MIQP} in
    \begin{align*}
        \mathcal{O}\!\left(
            n \omega^2 \ \delta\  k^{2\delta} \  
            \max\!\left\{
                \left(
                    \frac{U^2}{\eta} \  
                    \delta(\omega+1)^{3/2} \  
                    \kappa_2^2(1+\kappa_\infty)
                \right)^{\!\frac{4\delta}{\log(1/\rho)}},
                \;4^{\delta\theta}
            \right\}
        \right),
    \end{align*}
    time, where $\theta := \max\!\left\{\tfrac{2}{\log(1/\rho)},\, \tfrac{1}{\log^2(1/\rho)}\right\}$.
\end{corollary}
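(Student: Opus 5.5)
The corollary follows by specializing Theorem~\ref{thm:main} to $\gamma=1$ and bounding the only dimension-dependent factor, $4^{\Delta_{m+1}}$, in terms of the problem data. The prefactor $n\omega^2\,\delta\, k^{2\delta}$ carries over unchanged from Theorem~\ref{thm:main}. Under linear volume growth, Assumption~\ref{assumption: poly growth of m-deg} gives $\Delta_{m+1}\le \delta(m+1)$, so
\begin{align*}
4^{\Delta_{m+1}} \le 4^{\delta(m+1)}.
\end{align*}
It therefore suffices to bound $4^{\delta m}$ by a constant multiple of the maximum appearing in the statement. The factor $4^{\delta}$ from the ``$+1$'' is absorbed into the $\mathcal{O}(\cdot)$, treating $\delta$ as a structural constant exactly as in Theorem~\ref{thm:main}.

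Next, substitute $\gamma=1$ into the formula for $m$ in Theorem~\ref{thm:main}. Write $M:=\frac{8U^2}{\eta}\,\delta(\omega+1)^{3/2}\,\kappa_2^2(1+\kappa_\infty)$, so the first term in the max becomes $\frac{2\log M}{\log(1/\rho)}+2$. The remaining two terms become $\frac{2}{\log(1/\rho)}$ and $\frac{1}{\log^2(1/\rho)}$, whose maximum is exactly $\theta$. Hence
\begin{align*}
m \le \max\left\{\frac{2\log M}{\log(1/\rho)}+2,\ \theta\right\}+1.
\end{align*}
We then split into two cases according to which term attains the maximum.

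In the first case, $4^{\delta m}\le 4^{3\delta}\, 4^{\frac{2\delta\log M}{\log(1/\rho)}}$. Since $4^{x}=e^{x\log 4}$ and $\log 4<2$, we have $4^{\frac{2\delta\log M}{\log(1/\rho)}}\le M^{\frac{4\delta}{\log(1/\rho)}}$, provided $M\ge 1$. The constant $8^{\frac{4\delta}{\log(1/\rho)}}$ coming from $M$ is then folded into the displayed base; it is harmless because the bracket in the statement omits the $8$ only up to such a constant factor in the base. In the second case, $4^{\delta m}\le 4^{\delta}\,4^{\delta\theta}$, which is the second term of the max. Combining the two cases gives the claimed bound.

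The main obstacle is the bookkeeping of constants. Two points need care. First, the bound requires $M\ge1$. Second, the rounding constants $4^{\delta}$, $4^{2\delta}$ and the factor $8$ in $M$ must be legitimately hidden in the $\mathcal{O}$. This is valid when $\delta$ and $\log(1/\rho)$ are treated as fixed. I would verify $M\ge 1$ first. It holds because $\eta\le 1$, $\kappa_2,\delta\ge 1$, and we may take $U\ge 1$ without loss of generality, since enlarging $U$ only loosens the pruning test and preserves correctness.
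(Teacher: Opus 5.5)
Your proposal is correct and takes the same route as the paper, whose proof is simply ``specialize Theorem~\ref{thm:main} to $\gamma=1$''; your steps ($\Delta_{m+1}\le\delta(m+1)$, the ceiling, the two cases of the maximum, and $\log 4<2$) just make that substitution explicit. You are also right that the factors $4^{O(\delta)}$ and $8^{4\delta/\log(1/\rho)}$ are absorbed into the $\mathcal{O}(\cdot)$ only when $\delta$ and $\rho$ are treated as fixed, a caveat the paper leaves implicit.
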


\begin{proof}
    The result follows directly from Theorem~\ref{thm:main} after setting $\gamma=1$. 
\end{proof}

The significance of the above corollary lies in the fact that, for fixed treewidth $\omega$, margin parameters $(k,\eta)$, condition numbers $\kappa_2,\kappa_\infty$, and the volume growth parameters $(\delta, \gamma)$, the runtime scales \textit{linearly} with $n$. In the next section, we examine the extent to which this theoretical dependence is reflected in practice. Here, we highlight this result in the context of two important special cases for which existing guarantees are available: trees with a bounded number of leaves and banded matrices with bandwidth~$\bw$. The former has treewidth equal to~$1$, while the latter has treewidth at most~$\bw$.

\begin{itemize}
    \item \textbf{Tree-structured problems.}  
    When $\supp(\mQ)$ is a tree, \cite{bhathena2025parametric} shows that a specialized version of the parametric algorithm solves Problem~\eqref{eq: MIQP} in $\mathcal{O}(n^2)$ time and memory. Although the theoretical bound is quadratic in $n$, the empirical performance reported therein is nearly linear. The above corollary partially explains this phenomenon: for trees with a bounded number of leaves, the theoretical complexity is in fact linear in $n$ under the mild $(k,\eta)$-margin condition. This observation aligns the theoretical guarantees with empirical performance and clarifies why tree structures are particularly favorable for our method.

    \item \textbf{Banded matrices.}  
    Suppose that $\mQ$ is banded with bandwidth $\bw$. For the special case $\bw=2$ (corresponding to the tridiagonal structure), \cite{liu2023graph} proved that Problem~\eqref{eq: MIQP} can be solved to optimality in $\mathcal{O}(n^2)$ time. More recently, \citet{gomez2024real} developed an FPTAS for Problem~\eqref{eq: MIQP} with banded $\mQ$ with bandwidth $\bw\geq 2$, computing $\epsilon$-accurate solutions in time polynomial in $n$ and in $\|\vc\|_\infty/\epsilon$, provided both $\delta$ and $\kappa_2$ are fixed. Under the aforementioned margin assumption, our result yields an exact algorithm and reduces the dependence on $n$ for both problem classes.
\end{itemize}
    
\section{Numerical experiments}\label{sec: experiments synthetic}
Next, we evaluate the performance of our algorithm across a range of synthetic and realistic instances. In Subsection~\ref{subsec::methods}, we describe our experimental setup and implementation details. In Subsection~\ref{subsec::synthetic}, we examine the dependence of the algorithm on various problem parameters using synthetic data. Finally, in Subsection~\ref{sec: experiments real world}, we demonstrate the effectiveness of our approach on the exponential smoothing problem with outlier correction using real-world data.

\subsection{Methods and settings}\label{subsec::methods}
All experiments were conducted on a computer with 8-core 3.0 GHz Xeon Gold 6154 processors and 16 GB of memory.
We compare the performance of the proposed parametric algorithm against \textsc{Gurobi}~v10.0.2. For all \textsc{Gurobi} runs, we impose a time limit of one hour and terminate the solver once the optimality gap falls below $0.01\%$. If \textsc{Gurobi} does not attain an optimality gap of $0.01\%$ or smaller within this limit, we report the best optimality gap achieved by the solver. All results reported in the tables and figures are averaged across five independent trials. 
The Python implementation of our algorithm, along with the code used for the case studies presented in this paper, is available at:
\begin{center}
    \href{https://github.com/aareshfb/Treewidth-Parametric-Algorithm}{https://github.com/aareshfb/Treewidth-Parametric-Algorithm}.
\end{center}

\begin{itemize}
    \item \textbf{\textsc{Gurobi}} We reformulate Problem~\eqref{eq: MIQP} as
    \begin{subequations}\label{eq: MIQP_gurobi}
    \begin{align}
        \min_{\vx \in \R^n,\;\vz \in \{0,1\}^n}\quad & 
            \frac{1}{2}\vx^\top \mQ \vx + \vc^\top \vx + \vlambda^\top \vz,
            \label{eq: MIQP_obj_gurobi}\\[2mm]
        \text{s.t.}\quad 
            & -U \vz_i \le \vx_i \le U \vz_i, 
            \qquad i = 1,2,\ldots,n.
            \label{eq: MIQP_const_gurobi}
    \end{align}
    \end{subequations}
    where $U$ is the same upper bound also used in our parametric algorithm. We note that the problem also admits a perspective reformulation \citep{frangioni2006perspective}. We experimented with explicitly incorporating this reformulation within \textsc{Gurobi}; however, we found that the solver’s default formulation consistently outperformed our manually implemented perspective reformulation. We conjecture that this is because \textsc{Gurobi} internally exploits perspective-based strengthening more effectively. Consequently, all numerical results reported in this paper are based on \textsc{Gurobi}’s built-in configuration.
\item {\bf Parametric method}\quad We compare the performance of \textsc{Gurobi} with that of our proposed parametric algorithm (Algorithm~\ref{alg: para algo}). We note that the pruning subroutine (Algorithm~\ref{alg: prune}) has a runtime of $\mathcal{O}(\omega^2 N^2)$, as it exhaustively compares all pairs of the $N$ quadratic pieces that constitute a local parametric cost. In the special case where the support graph of $\mQ$ admits a tree decomposition that is a path (e.g., banded matrices), we can leverage an enhanced pruning procedure that processes the pieces in a single pass, requiring only $\mathcal{O}(N)$ comparisons. This improvement is possible when the quadratic functions are stored such that every consecutive pair is $m$-similar, which can be ensured at no additional cost when the tree decomposition has a path structure. The details of this implementation are deferred to Appendix~\ref{app: trim heuristic}.
\end{itemize}

\subsection{Results for synthetically generated instances} \label{subsec::synthetic}
Recall that, according to Theorem~\ref{thm:main}, the runtime of the parametric algorithm scales linearly with the problem size, polynomially with the margin parameters $(k,\eta)$, the volume growth parameter~$\gamma$, and the conditioning parameters $\kappa_2$ and~$\kappa_\infty$, and exponentially with the volume growth parameter~$\delta$. In this subsection, we empirically study these dependencies across a wide range of synthetic instances.

The Hessian $\mQ$ is constructed as $\mQ = \mY^\top \mY + \nu I,$
where $\mY$ is an upper triangular matrix with entries sampled uniformly from $[-1,1]$ within the band $\bw$ and zero otherwise (i.e., $\mY_{ij}\sim \mathcal{U}(-1,1)$ for $i \le j \le i+\bw$ and zero otherwise). Simple calculation reveals that $\supp(\mQ)$ is banded with bandwidth $\bw$. The parameter $\nu > 0$ serves to regulate the condition number of $\mQ$. The vector $\vc$ is generated with independent entries uniformly distributed on $[-10,10]$. Unless indicated otherwise, the regularization parameter $\vlambda$ is generated with independent entries uniformly distributed from the interval $(3.5,4.5)$, which typically results in solutions with roughly $50\%$ nonzero entries. In some experiments, we vary $\nu$ (to change the condition number) and $\vlambda$ (to change the sparsity level), while keeping the construction of $\mQ$ and $\vc$ fixed as described above.

\paragraph{\bf Effect of problem size.} In our first experiment, we examine the performance of the parametric algorithm on problems with varying $n$. Table~\ref{tab: vary n} reports the runtime for bandwidths $\bw=2$ and $\bw=4$. For instances exceeding $n=200$, \textsc{Gurobi} is unable to solve the instance to optimality within the time limit of one hour (indicated by TL). In contrast, the parametric algorithm can solve much larger instances within a few seconds. 
To verify correctness, we report the optimal objective values of both methods. It can be seen that the value obtained by the parametric algorithm is at least as good as that obtained by \textsc{Gurobi} (and in some cases strictly better). In addition, for the parametric algorithm, we report the average number of quadratic pieces retained after pruning. It can be seen that this number increases only modestly with $n$, showcasing the effectiveness of the proposed pruning approach in eliminating the irrelevant pieces.

\begin{table}[htbp]
	\begin{center}\small
		\caption{Performance comparison for varying sizes}
        \label{tab: vary n}
		\begin{tabular}{ c|c|c|c|c|c|c|c }

			\hline
			$\pmb{\bw}$&\textbf{Metric} & \textbf{Method} & $\pmb{n=100}$ & $\pmb{n=200}$&  $\pmb{n=500}$& $\pmb{n=1000}$&$\pmb{n=2000}$\\ 
			\hline
            \multirow{8}{*}{2}
& Condition no. $(\kappa_2)$ & --- 
& $\kappa_2 \approx 7.10$ 
& $\kappa_2 \approx 7.43$ 
& $\kappa_2 \approx 7.43$ 
& $\kappa_2 \approx 8.11$ 
& $\kappa_2 \approx 8.02$ \\
\cline{2-8}
& \multirow{2}{*}{Time(s)} 
& Parametric 
& 0.17 & 0.35 & 0.91 & 1.82 & 3.70 \\
&& \textsc{Gurobi} 
&  11.01 & 565.43  & TL & TL & TL \\
\cline{2-8}
& \multirow{2}{*}{Objective value}
& Parametric
& -588.89 & -1143.57 & -3033.48 & -6363.05 & -12367.60 \\
&& \textsc{Gurobi}
& -588.89 & -1143.57 & -3033.28 & -6362.32 & -12365.20 \\
\cline{2-8}
& Avg no. eqs
& Parametric
& 23 & 23 & 25 & 25 & 25 \\
\cline{2-8}
& B\&B nodes
& \multirow{2}{*}{Gurobi}
& 31326 & 2963376 & 4882224 & 2938436 & 1169022 \\
& Opt. gap
&& 0.00\% & 0.00\% & 1.11\% & 1.30\% & 1.50\% \\
\hline
\hline
\multirow{8}{*}{4}&Condition no. $(\kappa_2)$&---& $\kappa_2 \approx 6.18$& $\kappa_2 \approx 6.20$& $\kappa_2 \approx 6.18$& $\kappa_2 \approx 6.43$& $\kappa_2 \approx 6.79$\\
\cline{2-8}
&\multirow{2}{*}{Time(s)} &Parametric &7.82  &17.11	& 42.45 & 83.88 & 176.98\\ 
&& \textsc{Gurobi}&9.20  &1634.54& TL	& TL	& TL\\ 
\cline{2-8}
&\multirow{2}{*}{Objective value}&
  Parametric&-277.17&-547.57&-1383.69&-2886.91&-5837.25\\
  &&\textsc{Gurobi}	&-277.17&-547.57&-1383.44&-2885.99&-5833.97\\
\cline{2-8}
&Avg no. eqs&Parametric&995&1103&1082&1092&1139\\
\cline{2-8}
&	B\&B nodes&\multirow{2}{*}{Gurobi}&13265&3162950&3669416&2583014&1017048\\
&	Opt. gap && 0.00\%&0.41\%& 1.82\%& 2.32\%&2.51\%\\

\hline
		\end{tabular}
	\end{center}
\end{table}

We next evaluate the runtime of the parametric algorithm over a broader range of problem sizes and bandwidths. Figure~\ref{fig: vary_n}~(left) reports performance for $\bw = 2,3,4,5$ with $n$ ranging up to $20{,}000$. In these experiments, we tune $\nu$ such that $\kappa_2 \approx 6.50$. We observe that the parametric algorithm solves the largest instances with $\bw = 4$ in under one hour, whereas for $\bw=5$, it solves instances with $n$ up to $2{,}000$ within one hour. The runtime curves exhibit nearly identical slopes across all values of $\bw$, indicating that the bandwidth influences the overall runtime scale but not its linear dependence on~$n$, which is consistent with Corollary~\ref{cor::banded}.

\paragraph{\bf Effect of the bandwidth.}

We next isolate the effect of the bandwidth $\bw$ on the runtime of the parametric algorithm. Figure~\ref{fig: vary_n}~(right) reports the runtime as a function of $\bw$ for problem sizes $n= 50,200$ and condition number $\kappa_2\approx 4.63$, with $\bw$ ranging from $2$ to $6$. This log-linear plot shows that runtime scales exponentially with $\bw$, consistent with Corollary~\ref{cor::banded}.

\begin{figure}[htbp]
	\centering
	\includegraphics[width=0.5\linewidth]{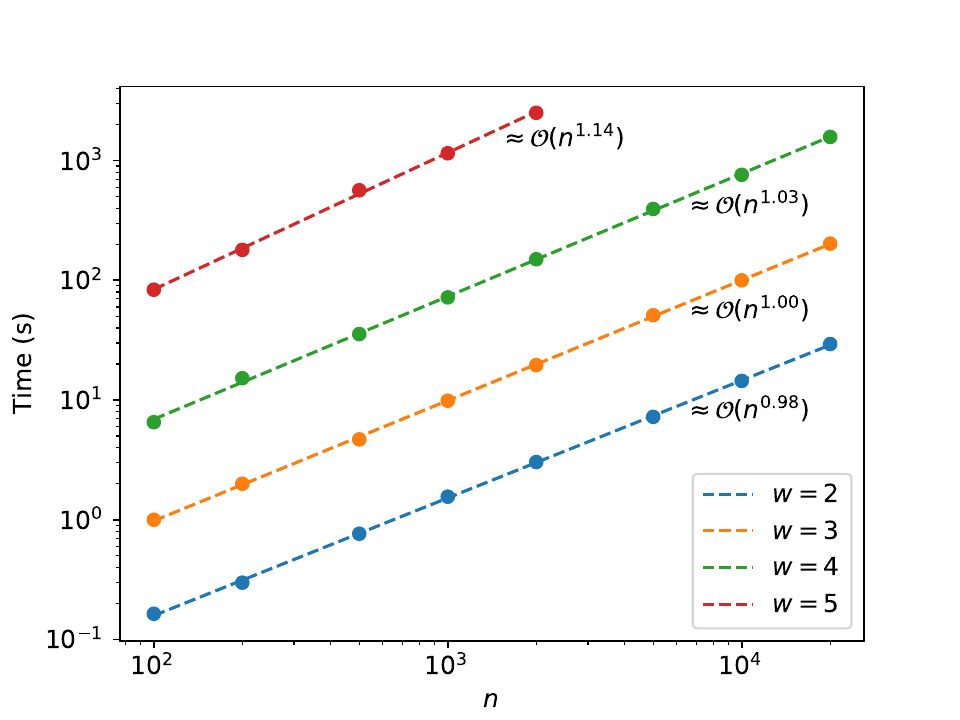}\includegraphics[width=0.5\linewidth]{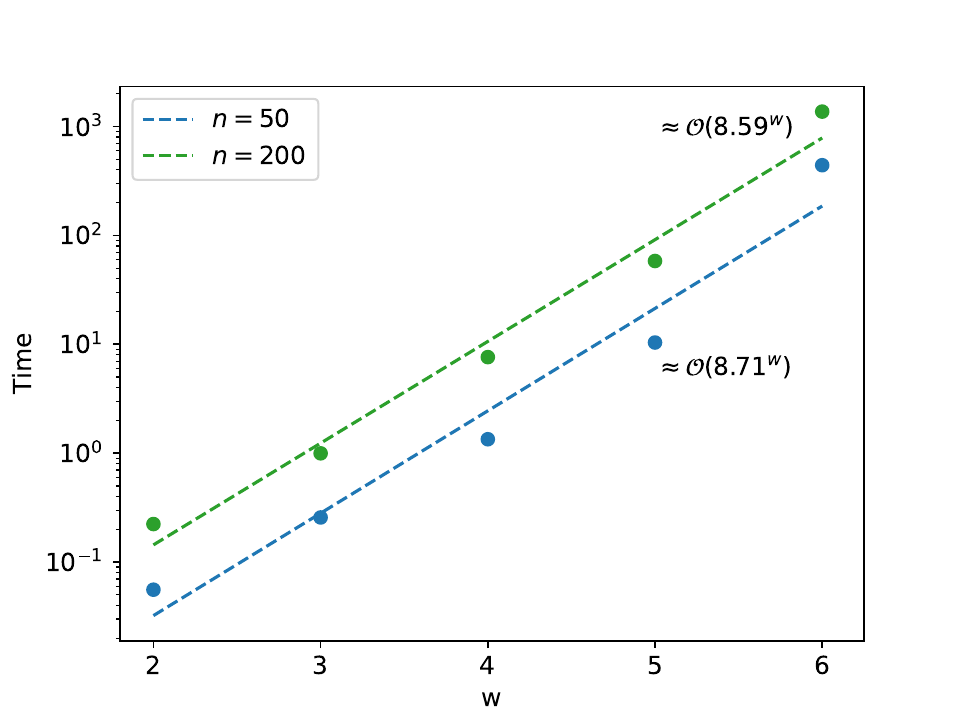}
	\caption{(Left) Runtime of the parametric algorithm as a function of $n$ for fixed $\bw$, shown on a log-log scale. The empirical runtime scales linearly with $n$. (Right) Runtime as a function of $\bw$ for fixed $n$, shown on a log-linear scale. The approximately linear trends indicate a dominant exponential dependence on $\bw$.}
	\label{fig: vary_n}
\end{figure}   	
 
\paragraph{\bf Effect of the condition number.} For $n = 1{,}000$, we vary the condition number $\kappa_2$ while keeping the bandwidth $\bw$ fixed. The regularization parameter $\vlambda$ is generated with independent entries uniformly distributed on $(2,3)$ to maintain comparable sparsity levels in the optimal solution across instances. Figure~\ref{fig: vary_condn}~(left) reports the corresponding runtimes for $\bw = 2,3,4,5$. According to Corollary~\ref{cor::banded}, the runtime of the algorithm grows polynomially with $\kappa_2$, with the exponent of this growth increasing as the bandwidth $\bw$ increases. This empirical observation is fully consistent with our theoretical result. For the small bandwidth of $\bw=2$, the runtime scales as $\mathcal{O}(\kappa_2^{1.31})$, whereas larger bandwidths yield substantially steeper growth; for instance, $\bw = 5$ exhibits a scaling of approximately $\mathcal{O}(\kappa_2^{4.20})$.

\begin{figure}[htbp]
	\centering
	\includegraphics[width=0.5\linewidth]{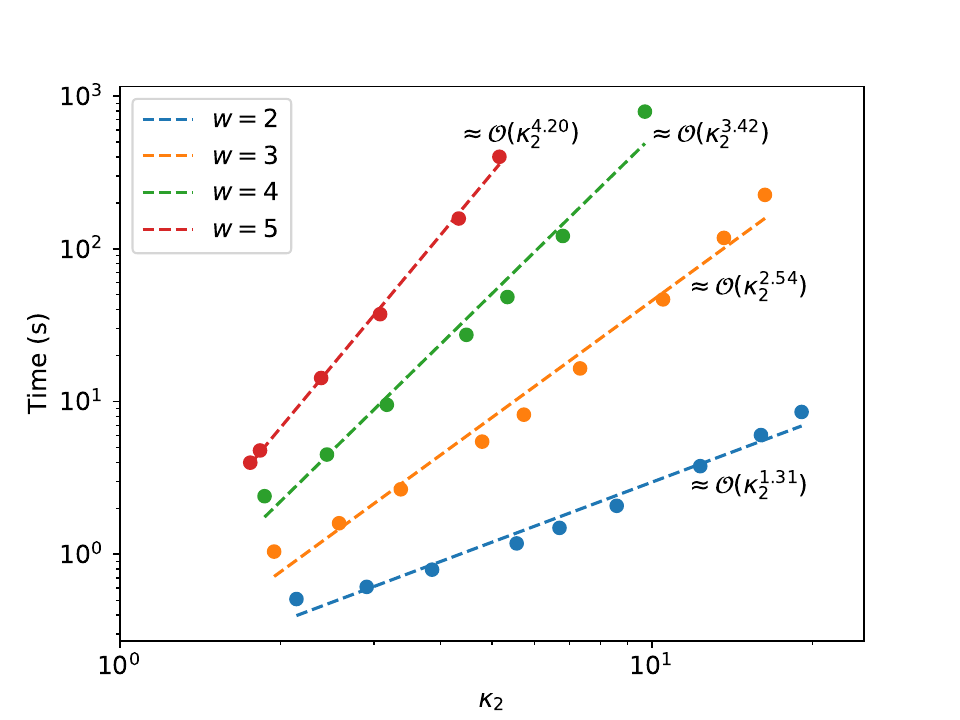}\includegraphics[width=0.5\linewidth]{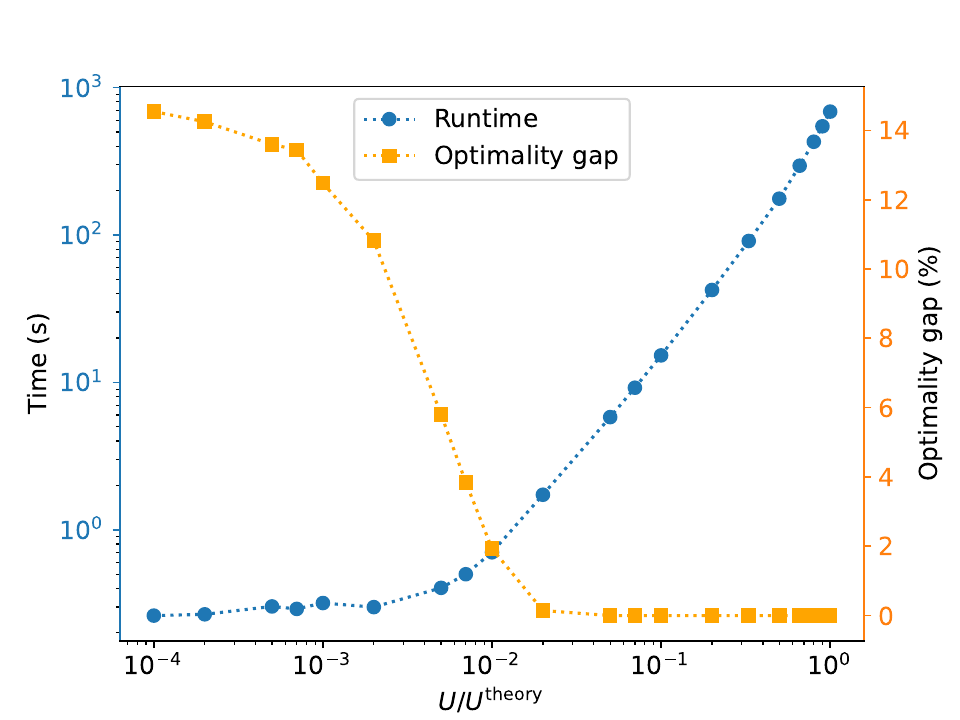}
	\caption{(Left) Runtime of the parametric algorithm as a function of $\kappa_2$ for fixed $w$, shown on a log--log scale. The empirical runtime scales polynomially with $\kappa_2$.
    (Right) Runtime (left axis) and optimality gap (right axis) as functions of the normalized parameter $U/U^{\mathrm{theory}}$, shown on a semi-logarithmic scale. Reducing $U$ reduces runtime at the cost of a larger optimality gap.
    }

	\label{fig: vary_condn}
\end{figure}

\paragraph{\bf Effect of the solution upper bound.} 
In our next experiment, we study how the choice of $U$ affects the performance of the parametric algorithm. Let $U^{\text{theory}}$ denote the value of $U$ prescribed by Lemma~\ref{lemma: U}. For this choice, the optimal solution satisfies $\|\vx^\star\|_\infty \le U^{\text{theory}}$, and therefore the parametric algorithm is guaranteed to recover the exact optimal solution. When $U < U^{\text{theory}}$, the pruning subroutine becomes more aggressive, which reduces the runtime of the algorithm. However, this runtime improvement may come at the cost of eliminating potentially optimal sparsity patterns, thereby leading to a nonzero optimality gap.

 Figure~\ref{fig: vary_condn} (right) reports the runtime (blue circles, left y-axis) and the optimality gap (orange squares, right y-axis) as functions of the normalized parameter $U / U^{\text{theory}}$. For this experiment, we fix $n = 1{,}000$, $\bw = 4$ and $\kappa_2 \approx 9.69$. As expected, when $U / U^{\text{theory}} = 1$, the algorithm recovers the exact optimal solution. As $U / U^{\text{theory}}$ decreases below one, the runtime decreases. Interestingly, for \(U / U^{\text{theory}} \ge 0.05\), the algorithm consistently recovers the optimal solution, while the runtime drops dramatically from \(600\) seconds to just \(6\) seconds. This suggests that the theoretical bound on \(U\) may be conservative in practice.

\paragraph{\bf Effect of the sparsity parameter.} The goal of the next experiment is to evaluate how the choice of the sparsity parameter $\vlambda$ affects the performance of the parametric algorithm. We fix the problem size at $n=1{,}000$, the bandwidth at $\bw=4$, and the condition number at approximately $\kappa_2 \approx 6.56$. For each trial, every coordinate $\lambda_i$ is set to the same constant value, denoted by $\bar{\lambda}$. The results are reported in Table~\ref{table: Vary lambda}. As the optimal solutions become denser, the runtime of the parametric algorithm increases modestly, reaching $135.41$ seconds when the solution has $93.7\%$ nonzero (NZ) entries. By contrast, \textsc{Gurobi} is only able to return an optimal solution in this single dense case. In all other settings, \textsc{Gurobi} times out, and the reported optimality gaps increase sharply with $\bar{\lambda}$. For extremely sparse solutions (i.e., with only $0.2\%$ nonzeros), \textsc{Gurobi} reports an optimality gap of $+\infty$.

\begin{table}[htbp]
	\begin{center}\small
		\caption{Performance comparison for varying regularization} 
		\label{table: Vary lambda}
        \resizebox{\textwidth}{!}{
		\begin{tabular}{ c|c|c|c|c|c|c|c } 
            			\hline
			\rule{0pt}{12pt}\multirow{2}{*}{$\mathbf{Metric}$}
			&\multirow{2}{*}{$\mathbf{Method}$}
			& $\pmb{\bar\lambda=0.07}$
			& $\pmb{\bar\lambda=1}$
			& $\pmb{\bar\lambda=5}$
			& $\pmb{\bar\lambda=7}$
			& $\pmb{\bar\lambda=14}$
			& $\pmb{\bar\lambda=20}$\\
			&&NZ $\approx 93.7\%$ 	
			  &NZ $\approx 75.2\%$		
			  &NZ $\approx 50.8\%$ 
			  &NZ $\approx 35.0\%$		
			  &NZ $\approx 7.38\%$
			  &NZ $\approx 0.2\%$ \\ 
			\hline
			\multirow{2}{*}{Time(s)} 
			& Parametric 	
			&135.41 &119.85 &86.71 &67.44 &36.23 &21.97\\ 
			&Gurobi		
			&10.04  & TL	 & TL	& TL	 & TL	& TL\\ 
			\hline
			B\&B nodes 
			& \multirow{2}{*}{Gurobi}	
			&1		
			&1780599	
			&2549880 
			&2840523 
			&4025755 
			&4416035\\
			Opt. gap 
			& 								
			&0.00\%
			&0.15\%		
			&2.26\%	 
			&9.63\% 
			&210.25\%
			&$\infty$ \\
			\hline
		\end{tabular}
        }
	\end{center}
	\footnotesize NZ refers to the percentage of non-zero elements in the optimal solution $\vx^\star$. 
\end{table}
This behavior highlights a key strength of the parametric algorithm. In applications such as ESOC, typically only a small fraction of observed values are corrupted with outlier noise. In precisely these regimes, where \textsc{Gurobi} returns solutions with large optimality gaps, the parametric algorithm recovers the optimum with much smaller runtime.

\paragraph{\bf Beyond banded structures.}
Next, we show that structural properties implied by small treewidth can be exploited beyond simple banded structures. To this end, we compare the performance of two variants of our algorithm: one that fully exploits the low-treewidth structure of the problem, and another that leverages only the banded structure.
We construct instances for $\mQ$ where the treewidth is fixed at $\omega=2$, while the bandwidth varies over $\bw=\{3,4,5\}$. One such graph is illustrated in Figure~\ref{fig: exp small tw}. To generate a matrix $\mQ$ with the desired properties, we first form a symmetric positive definite matrix with the specified bandwidth, following the procedure outlined in the previous section. We then selectively zero out certain off-diagonal entries to reduce the treewidth to $\omega=2$, while preserving the original banded structure. 

\begin{figure}[ht]
    \centering
    \includegraphics[width=0.7\linewidth]{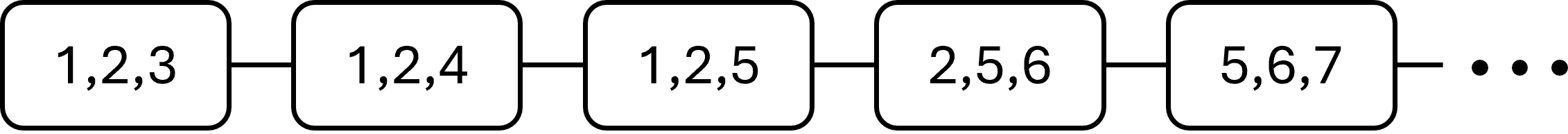}
    \vspace{2mm}
    
    \begin{tikzpicture}[scale=.7,auto=center,every node/.style={circle,draw=black!100, very thick, minimum size=8mm}, squarenode/.style={rectangle,draw=black!100, very thick, minimum size=8mm},roundnode/.style={rectangle, draw=red!60, fill=red!5, very thick, minimum size=7mm},yellownode/.style={circle, draw=yellow!100, fill=yellow!15, very thick, minimum size=7mm},greennode/.style={circle, draw=green!60, fill=green!15, very thick, minimum size=7mm},bluenode/.style={circle, draw=blue!60, fill=blue!20, very thick, minimum size=7mm}] 

    \node (n1) at (0,0) {1};
    \node (n2) at (0,-3) {2};
    \node (n3) at (-1.5,-1.5) {3};
    \node (n4) at (1.5,-1.5) {4};
    
    \node (n5) at (5,0) {5};
    \node (n6) at (5,-3) {6};
    \node (n7) at (3.5,-1.5) {7};
    \node (n8) at (6.5,-1.5) {8};

\draw[-, line width=0.5mm]  (n1) -- (n3);
\draw[-, line width=0.5mm]  (n2) -- (n3);
\draw[-, line width=0.5mm]  (n1) -- (n5);
\draw[-, line width=0.5mm]  (n1) -- (n4);
\draw[-, line width=0.5mm]  (n2) -- (n4);

\draw[-, line width=0.5mm]  (n1) -- (n2);
\draw[-, line width=0.5mm]  (n2) -- (n6);
\draw[-, line width=0.5mm]  (n5) -- (n6);

\draw[-, line width=0.5mm]  (n5) -- (n7);
\draw[-, line width=0.5mm]  (n5) -- (n8);
\draw[-, line width=0.5mm]  (n6) -- (n7);
\draw[-, line width=0.5mm]  (n6) -- (n8);

\foreach \x in {8,8.5,9}
        \fill (\x,-1.5) circle (2.5pt);

	\end{tikzpicture}
    \caption{Tree decomposition of a graph derived from a banded matrix with bandwidth $\bw=4$ and treewidth $\omega=2$. The treewidth is reduced by eliminating edges while preserving the original bandwidth.}
    \label{fig: exp small tw}
\end{figure}

Table~\ref{table: exp small tw} reports the performance of our methods for a fixed treewidth $\omega = 2$ and bandwidths $\bw = 3,4,5$. All experiments are conducted with problem size $n=1{,}000$, and the corresponding condition numbers $\kappa_2$ are reported in the table. The row labeled ``{Parametric}'' corresponds to the parametric algorithm applied to a tree decomposition of width $\omega = 2$. The row labeled ``{Banded}'' corresponds to the same parametric algorithm applied under the assumption of a banded structure only; in this case, the width of the induced tree decomposition equals $\bw>2$. As shown in the table, exploiting the small treewidth of the graph—beyond merely its banded structure—substantially reduces both the runtime and the average number of quadratic pieces. In contrast, \textsc{Gurobi} is unable to solve any of these instances within the one-hour time limit. 

\begin{table}[htbp]
	\begin{center}\small
		\caption{Performance comparison for varying bandwidth}
		\label{table: exp small tw}
		\begin{tabular}{ c|c|c|c|c} 
			\hline
			\textbf{Metric} & \textbf{Method} & $\pmb{\bw=3}$ & $\pmb{\bw=4}$&  $\pmb{\bw=5}$\\ 
			\hline
            Condition no. ($\kappa_2$)&---&8.71&9.25&8.12\\
            \hline
			\multirow{3}{*}{Time(s)} & Parametric & 8.60& 48.58& 167.83\\
            &Banded & 8.52& 64.36& 658.24\\ 
			& \textsc{Gurobi}& TL& TL& TL\\ 
			\hline
            \multirow{2}{*}{Avg no. eqs}& Parametric&81&442&1477\\
            &Banded&123&848&3385\\
            \hline
		B\&B nodes&\multirow{2}{*}{Gurobi}&3193465&3594783&3552258\\
		Opt. gap && 10.02\%&14.02\%&12.67\%\\
        \hline
		\end{tabular}
	\end{center}
\end{table}

\subsection{Results for real-world instances}\label{sec: experiments real world}

Next, we apply the proposed parametric algorithm to solve the problem of exponential smoothing with outlier correction (\ref{eq: ESO}), introduced in Section~\ref{sec: ESO}. In addition to its runtime, we focus on the forecasting accuracy of this model relative to simple exponential smoothing (\ref{eq: SES}).

\paragraph{\bf Datasets}
Our experiments use four real-world time series from the Numenta Anomaly Benchmark (NAB)~\citep{ahmad2015nab}. NAB is a dataset designed to evaluate anomaly detection algorithms on streaming data. It includes real-world time-series data from diverse domains, such as cloud infrastructure metrics and traffic data. For detailed descriptions of the dataset, evaluation protocols, and related methods, see \citep{lavin2015evaluating,ahmad2017unsupervised}. The specific datasets used in our experiments are:
\begin{enumerate}
    \item \texttt{ec2\_cpu\_utilization\_53ea38.csv}; referred to as \texttt{CPU-1},
    \item \texttt{ec2\_cpu\_utilization\_ac20cd.csv}; referred to as \texttt{CPU-2},
    \item \texttt{rds\_cpu\_utilization\_e47b3b.csv}; referred to as \texttt{CPU-3},
    \item \texttt{speed\_7578.csv}; referred to as \texttt{Traffic}.
\end{enumerate}
Below, we briefly explain these datasets.

\paragraph{CPU utilization data}
The first three datasets consist of CPU utilization percentages collected from Amazon Web Services (AWS) servers. In such datasets, anomalies may stem from sudden workload surges or Distributed Denial-of-Service (DDoS) attacks~\citep{el2022flow}. The first two datasets correspond to general-purpose compute instances (Amazon EC2), while the third is obtained from a database server (Amazon RDS). Each dataset contains $2{,}000$ CPU utilization measurements recorded at $5$-minute intervals. As shown in the first three rows of Figure~\ref{fig: SES vs ESO}, these time series exhibit distinct temporal patterns, providing a diverse range of behaviors for evaluating the accuracy of the \ref{eq: ESO} model.

\paragraph{Traffic data}
The final dataset consists of average traffic speed measurements from the Twin Cities Metro area in Minnesota and contains $1{,}127$ observations recorded at irregular time intervals. Accurate short-term traffic speed forecasting is critical for anticipating changing road conditions and enabling effective traffic management~\citep{ouyang2020large}. In this context, anomalies may signal a major crash causing traffic obstruction~\citep{zhao2023unsupervised, van2019real}. This dataset is shown in the last row of Figure~\ref{fig: SES vs ESO} and is included to evaluate the proposed method on data exhibiting sampling irregularity and noise characteristics that differ substantially from those of the CPU utilization signals.

\paragraph{\bf Experimental setup}
Let $\vy \in \mathbb{R}^T$ denote a time series of length $T$, where $\vy_t$ represents the observed value at time $t$. Our goal is to produce one-step-ahead point forecasts at each time $t$. We denote the resulting forecasts by $\hat{\vy}^{\text{SES}}_{t}$ for \ref{eq: SES}, and by $\hat{\vy}^{\text{ESOC}}_{t}$ for \ref{eq: ESO}.
For both models, the one-step-ahead forecast at time $t+1$ is obtained from the smoothed value at the previous time $t$~\citep{hyndman2008forecasting}. Specifically, for~\ref{eq: SES}, the forecast is given by $\hat{\vy}^{\text{SES}}_{t+1} := \vx^{\text{SES}}_t = \beta \vy_t + (1-\beta)\vx^{\text{SES}}_{t-1}$, where $\vx^{\text{SES}}_t$ denotes the exponentially smoothed signal. For~\ref{eq: ESO}, the forecast is defined as $\hat{\vy}^{\text{ESOC}}_{t+1} := \vx^{\text{ESOC}}_t$, where $\vx^{\text{ESOC}} \in \mathbb{R}^T$ is the smoothed signal obtained by solving~\ref{eq: ESO}. To evaluate the forecasting accuracy of~\ref{eq: SES}, we use the mean squared error (MSE):
\begin{align}\label{eq: MSE-SES}
\operatorname{MSE}^{\text{SES}} = \frac{1}{T-1} \sum_{t=2}^{T} \bigl(\hat{\vy}^{\text{SES}}_t - \vy_t\bigr)^2.
\end{align}
Unlike the model in~\eqref{eq: SES}, the model in~\eqref{eq: ESO} is capable of identifying and discounting outliers. Accordingly, we evaluate its forecasting accuracy only over the outlier-free region. Specifically, we define
\begin{align}\label{eq: MSE-ESO}
\operatorname{MSE}^{\text{ESOC}} = \frac{1}{T-1-\sum_{t=2}^{T}\bbbone(\vo_t)} \sum_{t=2}^{T} \bigl(1 - \bbbone(\vo_t)\bigr)\bigl(\hat{\vy}^{\text{ESOC}}_t - \vy_t\bigr)^2,
\end{align}
where $\bbbone(\vo_t)$ is an indicator that equals one if $\vy_t$ is identified as an outlier and zero otherwise.

The parameters of both models are selected via grid search over a predefined set of candidate values. For parameter tuning, we split each time series into a training set consisting of the first half of the observations, $\{\vy_t\}_{t=1}^{\lfloor T/2\rfloor}$, and a test set consisting of the remaining observations, $\{\vy_t\}_{t=\lfloor T/2\rfloor+1}^{T}$. For each method and parameter configuration, we compute the MSE on the training set and select the parameters that minimize this quantity. Using the selected parameters, we run each algorithm on the entire signal and compare the performance by computing MSE on the test set, according to~\eqref{eq: MSE-SES} for~\eqref{eq: SES} and~\eqref{eq: MSE-ESO} for~\eqref{eq: ESO}.

For the model in~\eqref{eq: ESO}, we perform a grid search over $\beta \in \{0.01, 0.1, 0.2, \ldots, 0.9, 0.99\}$ and $\vlambda_t = \lambda, t=1,\dots, T$, where $\lambda \in \{10^{-5}, 5\times10^{-5}, 10^{-4}, 5\times10^{-4}, 10^{-3}, 5\times10^{-3}, 10^{-2}, 5\times10^{-2}\}$, while fixing $\mu_1 = 1.2$ and $\mu_2 = 0.001$. To avoid degenerate solutions, we restrict the grid search to parameter settings that classify fewer than $10\%$ of the observations as anomalies.
Since the model in~\eqref{eq: SES} has a single tunable parameter $\beta$, we select $\beta$ using the same candidate set as above.

\paragraph{\bf Results}
Table~\ref{tab: SES_vs_ESO} reports both the training and test MSE values of the smoothed signals, using the parameters obtained over the training set. For \eqref{eq: ESO}, we also report the percentage of detected outliers. In all cases, \eqref{eq: ESO} achieves lower MSE than \eqref{eq: SES}, demonstrating its ability to suppress anomalies and produce more accurate forecasts. Notably, the largest improvements in test MSE are observed for the \texttt{traffic} and \texttt{CPU-3} signals. We note that the fraction of detected outliers is constrained to be below \(10\%\) only during the training phase. The results in this table are obtained by solving the optimization problem over the entire signal (training and test) using the parameters learned from training. Consequently, for the \texttt{traffic} signal, the observed outlier proportion exceeds this threshold.

\begin{table}[ht]
\caption{Comparison of MSE for SES and ESOC.}
\centering\small
\begin{tabular}{l|c|cc|c}
\hline
\multirow{2}{*}{\textbf{Signal}} & \multirow{2}{*}{\ \textbf{Segment}\ }
& \multicolumn{2}{c|}{\textbf{MSE}}
& \multirow{2}{*}{\textbf{Outliers detected}} \\
\cline{3-4}
& 
& \ \textbf{SES}\ & \ \textbf{ESOC}\ 
&  \\
\hline
\multirow{2}{*}{\texttt{CPU-1}}
 & Train & 0.0101 & 0.0063 & 3.2\% \\
 & Test  & 0.0106 & 0.0068 & 2.8\% \\
\hline
\multirow{2}{*}{\texttt{CPU-2}}
 & Train & 9.1930 & 3.0941 & 2.9\% \\
 & Test  & 5.3983 & 3.1840 & 3.0\% \\
\hline
\multirow{2}{*}{\texttt{CPU-3}}
 & Train & 6.4085 & 0.1404 & 6.0\% \\
 & Test  & 0.8021 & 0.1649 & 16.6\% \\
\hline
\multirow{2}{*}{\texttt{Traffic}\ }
 & Train & 20.2650 & 6.7490 & 11.5\% \\
 & Test  & 65.5931 & 6.0920 & 20.7\% \\
\hline
\end{tabular}
\label{tab: SES_vs_ESO}
\end{table}

Table~\ref{tab: ESO_runtime} compares the runtime of \textsc{Gurobi} and the proposed parametric algorithm for solving~\eqref{eq: ESO}. For all instances, \textsc{Gurobi} reaches the one-hour time limit; we therefore report only the optimality gaps at termination. In contrast, the parametric algorithm returns provably optimal solutions for all instances. Most datasets are solved within approximately $30$ seconds, with one notably larger instance (\texttt{CPU-3}) requiring about $958$ seconds. Overall, these results demonstrate that the parametric algorithm consistently attains optimal solutions and does so far more efficiently than \textsc{Gurobi}. 

\begin{table}[ht]\small
\caption{Performance comparison between the parametric algorithm and \textsc{Gurobi} for solving~\ref{eq: ESO}.}
\centering
\label{tab: realworld runtime}
\begin{tabular}{c|c|c|c|c|c}
\hline
\textbf{Metric} & \textbf{Method} & \textbf{\texttt{CPU-1}} & \textbf{\texttt{CPU-2}} & \textbf{\texttt{CPU-3}} & \textbf{\texttt{Traffic}} \\
\hline
Signal length ($T$)&---&2000&2000&2000&1127\\
\hline
\multirow{2}{*}{Time(s)}& Parametric  & 21.24 & 24.94 & 958.93 & 27.10 \\
&Gurobi & TL & TL & TL & TL \\
\hline
B\&B nodes & \multirow{2}{*}{Gurobi}& 2077648 &3909773
 &1609698 &7360291\\
Opt. gap &  & 78.88\% & 2.04\% & 1.06\% & 1.94\% \\

\hline
\end{tabular}
\label{tab: ESO_runtime}

\end{table}

\begin{figure}[htbp]
    \centering
    \vspace{-5mm}
    \includegraphics[width=0.45\linewidth]{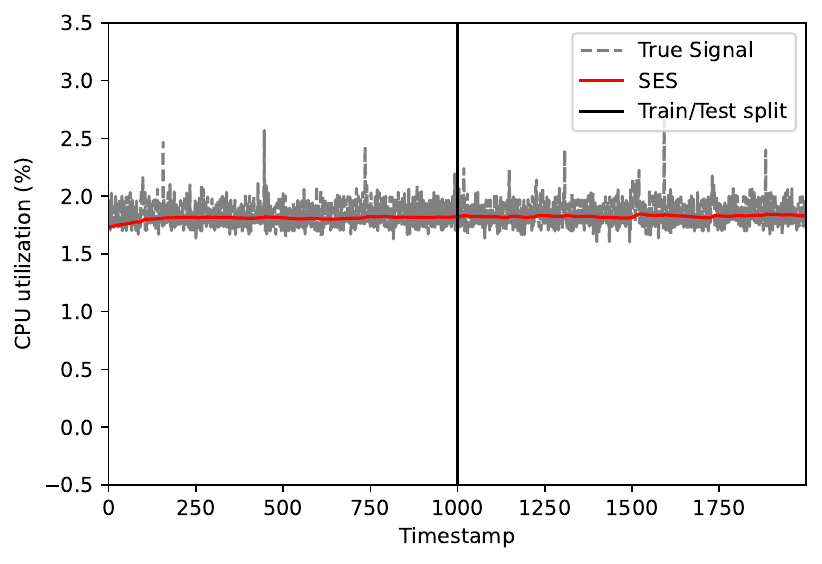}\includegraphics[width=0.45\linewidth]{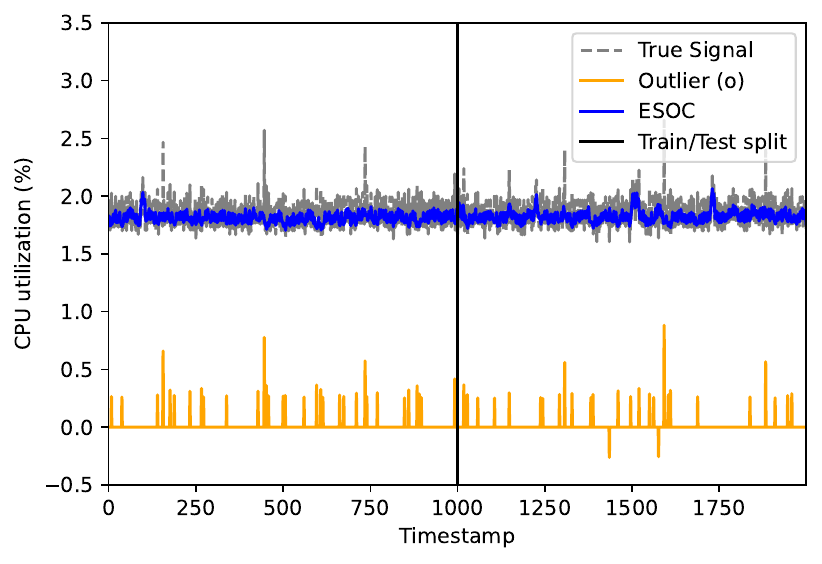}
    \includegraphics[width=0.45\linewidth]{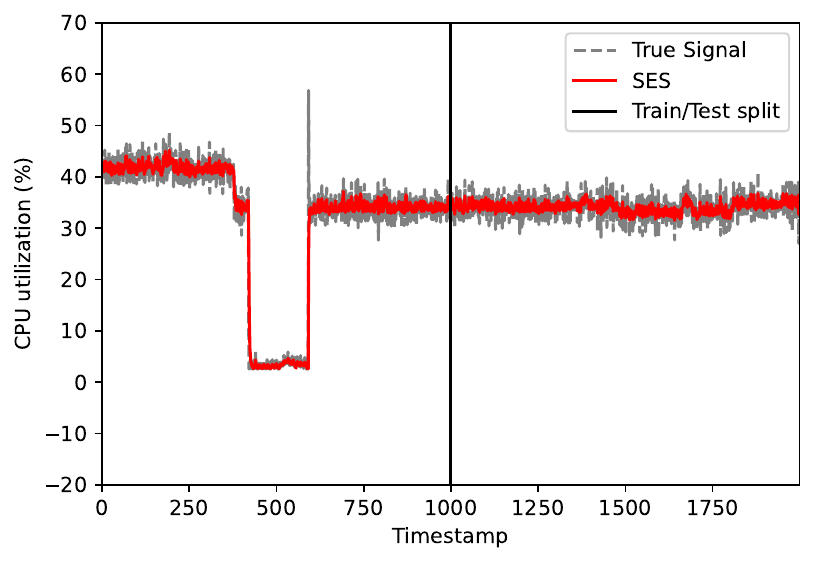}\includegraphics[width=0.45\linewidth]{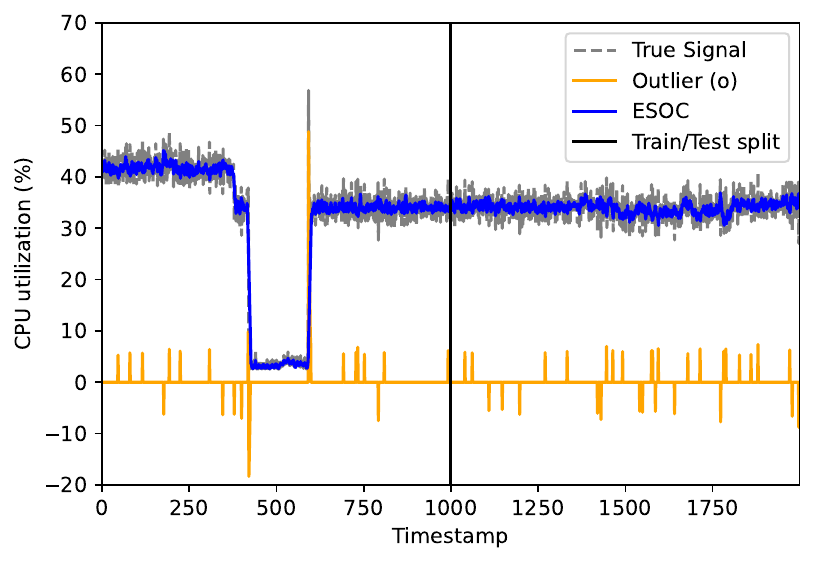}
    \includegraphics[width=0.45\linewidth]{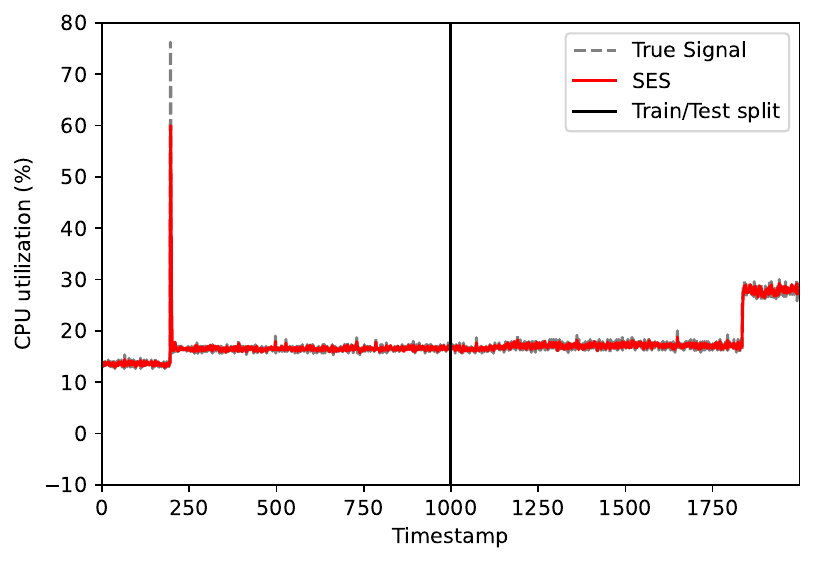}\includegraphics[width=0.45\linewidth]{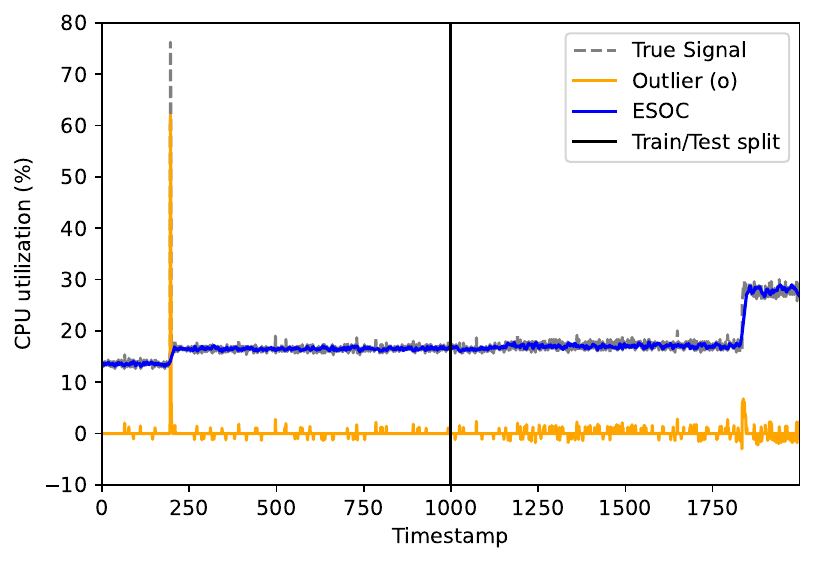}
    \includegraphics[width=0.45\linewidth]{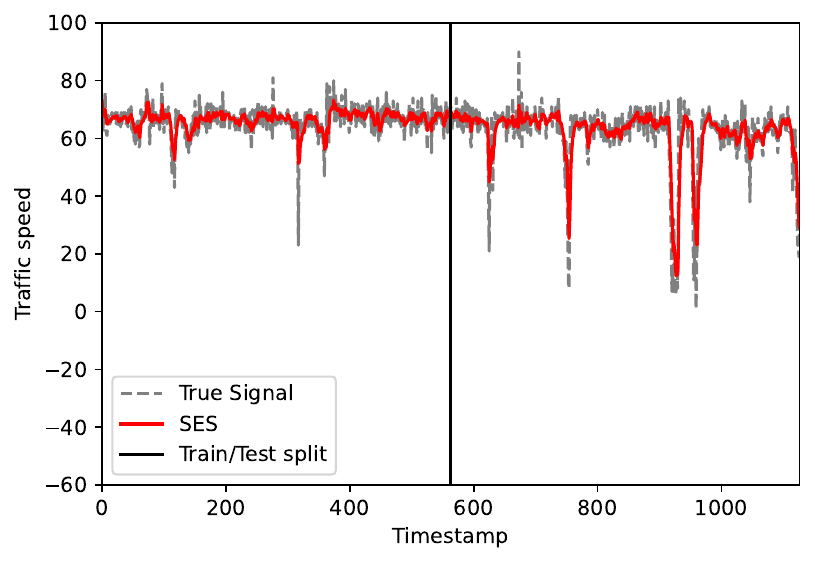}\includegraphics[width=0.45\linewidth]{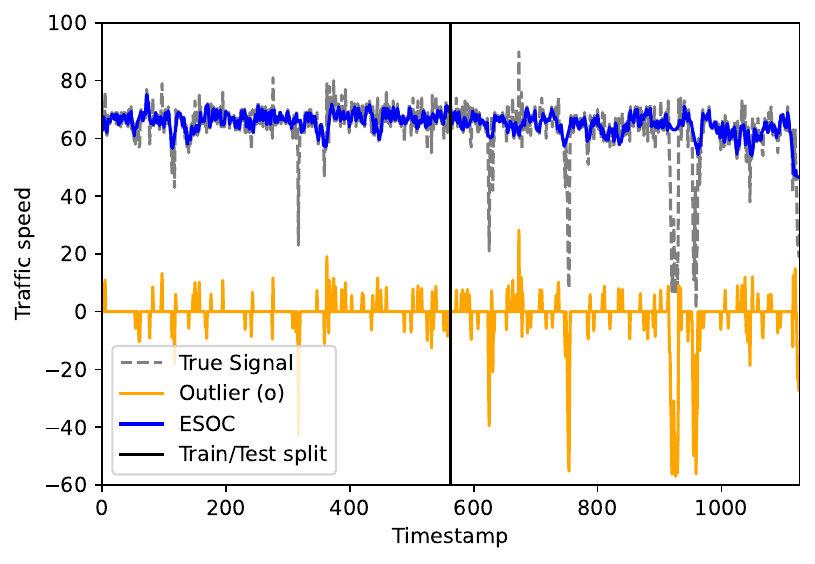}
    \caption{\small Comparison of SES (left column) and ESOC (right column) on four representative real-world signals. Each row corresponds to a different signal instance, with the observed signal shown alongside the smoothed estimates.
}
    \label{fig: SES vs ESO}
\end{figure}

Finally, Figure~\ref{fig: SES vs ESO} visualizes the recovered signals obtained using the models in~\eqref{eq: SES} and~\eqref{eq: ESO} across all four datasets.

Each row corresponds to a single dataset, with the output of~\ref{eq: SES} shown in the left column and that of~\ref{eq: ESO} shown in the right column. In all panels, the observed signal is plotted together with the corresponding smoothed estimate, and for~\ref{eq: ESO}, the estimated outlier vector $\vo$ is highlighted in orange.

For the first dataset (\texttt{CPU-1}), the trend in the training set is representative of the full series. Under~\ref{eq: SES}, the forecast is relatively smooth due to the small value of $\beta$ selected on the training data, but exhibits a noticeable lag in the early portion of the series, where the recursion is dominated by the initial observation $\vy_1$ (see Section~\ref{sec: exponential smoothing}). In contrast, the forecast from~\ref{eq: ESO} adapts more quickly, tracks the underlying signal more closely, and, by explicitly modeling outliers, shows reduced sensitivity to the initial value $\vy_1$.

For the second dataset (\texttt{CPU-2}), the training and test segments exhibit visibly different behaviors. The training portion contains abrupt changes (between $t=420$ and $t=600$) followed by a large outlier, both of which are absent in the test set (the second half of the signal). While both~\ref{eq: SES} and~\ref{eq: ESO} attenuate this outlier,~\ref{eq: ESO} produces a smoother estimate of the underlying trend. As shown in Table~\ref{tab: SES_vs_ESO}, this leads to a substantially lower test MSE for~\ref{eq: ESO}.

For the third dataset (\texttt{CPU-3}), the training set contains a large outlier followed by a single jump (around $t\approx 240$), while the test set exhibits an additional jump. Indeed,~\ref{eq: SES} adapts to the jumps but incorporates the large outlier into its forecast. In contrast,~\ref{eq: ESO} successfully isolates and removes the outlier, accurately tracks the jumps, and yields a smoother estimate of the underlying trend.

Finally, for the fourth dataset (\texttt{Traffic}), the signal is less smooth and exhibits frequent short-term fluctuations across both the training and test sets. Toward the end of the time horizon, two large spikes occur. While~\ref{eq: SES} retains both small and large fluctuations, including the large spikes,~\ref{eq: ESO} captures the underlying trend and smaller variations while effectively discarding the large spikes.

\section{Conclusion}\label{sec: conclusion}

In summary, this paper demonstrates that convex quadratic optimization problems with indicator variables can be solved efficiently by exploiting graph-structured sparsity and a margin condition. By leveraging these properties, the proposed parametric algorithm achieves polynomial time and memory complexity—and, in certain regimes, even linear time and memory complexity—under suitable assumptions, substantially outperforming general-purpose mixed-integer solvers on large-scale instances. Beyond its theoretical guarantees, the framework offers a principled extension of exponential smoothing for joint forecasting and outlier detection, and exhibits strong empirical performance on challenging real-world time-series datasets.

\section{Code and Data Disclosure}\label{sec: Code and Data Disclosure}
The code and data to support the numerical experiments in this paper can be found at 
\begin{center}
    \href{https://github.com/aareshfb/Treewidth-Parametric-Algorithm}{https://github.com/aareshfb/Treewidth-Parametric-Algorithm}.
\end{center}
Note that the implementation provided is restricted to instances where the tree decomposition is a path. 
The Numenta Anomaly Benchmark (NAB)~\citep{ahmad2015nab} used in the experiments containing real-world data can be accessed at 
\begin{center}
    \href{https://github.com/numenta/NAB/tree/master/data}{https://github.com/numenta/NAB/tree/master/data}.
\end{center}

\section*{Acknowledgments}

This research is supported, in part, by NSF grants 2152776, 2337776, ONR
grants N00014-26-1-2074, N00014-22-1-2602, N00014-26-12117, and AFOSR grant FA9550-24-1-0086.

\bibliographystyle{plainnat} 
\bibliography{mybib}

\appendix

\section{Omitted proofs}
\subsection{Proof of Lemma~\ref{lemma: f_u as p_s}} \label{app:lemma1}
Recall that $\J_u$ was defined as the set of nodes in $\supp_u(\mQ)$, excluding the nodes in $\B_u$. Let us define $p_{u,\bs}(\valpha_{\B_u})$ as
\begin{subequations}\nonumber
	\begin{align}
		p_{u,\bs}(\valpha_{\B_u}) = \min_{\vx\in\R^{n_u}}&\ \frac{1}{2}\valpha_{\B_u}^\top \mQ_{\B_u,\B_u}\valpha_{\B_u}+\vc_{\B_u}^\top \valpha_{\B_u}+\left(\frac{1}{2}\vx^\top \mQ_{\J_u,\J_u}\vx+ \valpha_{\B_u}^\top \mQ_{\B_u,\J_u} \vx + \vc_{\J_u}^\top \vx+\vlambda_{\J_u}^\top \bs\right)\\ 
		\text{s.t. }& \ x_i(1-s_i)=0\quad  i\in \J_u.
	\end{align}
\end{subequations}
It is easy to verify that $f_u(\valpha_{\B_u}) = \min_{\bs\in\{0,1\}^{n_u}}\{p_{u,\bs}(\valpha_{\B_u})\}$. Therefore, it remains to characterize the explicit form of $p_{u,\bs}(\valpha_{\B_u})$ for every $\bs\in\{0,1\}^{n_u}$, and show that it is strongly convex and quadratic. 

First, let $\J_{u,s}=\{i\in \J_u \mid  s_i=1\}$. Therefore, we have
\begin{subequations}\nonumber
	\begin{align}
		p_{u,\bs}(\valpha_{\B_u}) =& \frac{1}{2}\valpha_{\B_u}^\top \mQ_{\B_u,\B_u}\valpha_{\B_u}+\vc_{\B_u}^\top\valpha_{\B_u}\\
        &+\min_{\vx\in\R^{|\J_{u,\bs}|}}\left\{\frac{1}{2}\vx^\top \mQ_{\J_{u,\bs},\J_{u,\bs}}\vx+ \valpha_{\B_u}^\top \mQ_{\B_u,\J_{u,\bs}} \vx + \vc_{\J_{u,\bs}}^\top \vx+\sum_{i\in \J_{u,\bs}} \vlambda_{i}\right\}. 
	\end{align}
\end{subequations}
From the Karush-Kuhn-Tucker conditions, it follows that
\begin{subequations}\nonumber
	\begin{align}
		p_{u,\bs}(\valpha_{\B_u}) = &\frac{1}{2}\valpha_{\B_u}^\top\left(\mQ_{\B_u,\B_u}-\mQ_{\B_u,\J_{u,\bs}} \left(\mQ_{\J_{u,\bs},\J_{u,\bs}}\right)^{-1}\mQ_{\B_u,\J_{u,\bs}}^\top\right)\valpha_{\B_u}\\
        &+\left(\vc_{\B_u}-\vc_{\J_{u,\bs}}^T \left(\mQ_{\J_{u,\bs},\J_{u,\bs}}\right)^{-1}\mQ_{\B_u,\J_{u,\bs}}^{\top}\right)^\top\valpha_{\B_u} \\
		&           +\left(-\frac{1}{2}\vc_{\J_{u,\bs}}^\top (\mQ_{\J_{u,\bs},\J_{u,\bs}})^{-1}\vc_{\J_{u,\bs}}+\sum_{i\in \J_{u,\bs}} \lambda_{i}\right).
	\end{align}
\end{subequations} 
Furthermore, note that $\left(\mQ_{\B_u,\B_u}-\mQ_{\B_u,\J_{u,\bs}}\left(\mQ_{\J_{u,\bs},\J_{u,\bs}}\right)^{-1}\mQ_{\B_u,\J_{u,\bs}}^\top\right)$ is the Schur complement of \linebreak $\mQ_{\J_{u,\bs}\cup\{\B_u\}, \J_{u,\bs}\cup\{\B_u\}}$, which, owing to the positive definiteness of $\mQ$, is positive definite. This completes the proof.\qed\medskip

\subsection{Proof of Lemma~\ref{lemma: fu recursion}}\label{app: fu recursion}
    
    First, we derive an explicit expression for $g_v$ for each $v \in \parent_{\textsf T}(u)$. Next, we show that the optimization problem defining $f_u$ can be decomposed into independent subproblems for each $v$. Finally, we substitute the expressions for $g_v$ within each subproblem to obtain the stated equation for $f_u$.

    We start with the first step and derive an explicit expression for $g_v$. Fix $v\in\parent(u)$ and consider the local parametric cost $f_v$. Setting the variables $\vx_{\pi_v(\B_v)}=\valpha_{\B_v}$ according to Constraint~\eqref{const: x=alpha tree} in the definition of $f_v$, we obtain:
        \begin{subequations}\label{eq: f_u with valpha fixed}
            \begin{align}
            f_v(\valpha_{\B_v})=&\frac{1}{2}\valpha_{\B_v}^\top \mQ_{\B_v,\B_v} \valpha_{\B_v}+\vc_{\B_v}^\top \valpha_{\B_v}\nonumber\\
            &+\min_{\vx\in\R^{n_v},\vz\in\{0,1\}^{n_v}} \left\{\frac{1}{2}\vx_{\pi_v(\J_v)}^\top \mQ_{\J_v,\J_v} \vx_{\pi_v(\J_v)}+\vx_{\pi_v(\J_v)}^\top \mQ_{\J_v,\B_v} \valpha_{\B_v}+\vc_{\J_v}^\top \vx_{\pi_v(\J_v)}+\vlambda_{\J_v}^\top \vz_{\pi_v(\J_v)}\right\}\label{obj: f_u with valpha fixed}\\
            &\qquad\qquad \text{s.t.}\qquad \vx_i(1-\vz_i)=0\qquad \forall i\in \pi_v(\J_v).
        \end{align}
        \end{subequations}
        After isolating the contribution of the variable $\valpha_v$, $\frac{1}{2}\valpha_{\B_v}^\top \mQ_{\B_v,\B_v} \valpha_{\B_v}+\vc_{\B_v}^\top \valpha_{\B_v}$ can be rewritten as
        \begin{align}\label{eq: f_v decomposition1}
            \frac{1}{2}\valpha_{\B_v}^\top \mQ_{\B_v,\B_v} \valpha_{\B_v}
            + \vc_{\B_v}^\top \valpha_{\B_v}
            &=
            \frac{1}{2}\valpha_{\B_v\setminus v}^\top
            \mQ_{\B_v\setminus v,\,\B_v\setminus v}
            \valpha_{\B_v\setminus v}
            + \vc_{\B_v\setminus v}^\top \valpha_{\B_v\setminus v}   \nonumber\\
            &\quad
            + \frac{1}{2}\mQ_{v,v}\valpha_v^2
            + \valpha_v \mQ_{v,\B_v\setminus v}\valpha_{\B_v\setminus v}+ \vc_v \valpha_v  \nonumber\\
            &=
            \phi_v(\valpha_{\B_v\setminus {v}})
            + \frac{1}{2}\mQ_{v,v}\valpha_v^2
            + \valpha_v \mQ_{v,\B_v\setminus v}\valpha_{\B_v\setminus v}
            + \vc_v \valpha_v .
        \end{align}
         
        Similarly, the term $\vx_{\pi_v(\J_v)}^\top \mQ_{\J_v,\B_v}\valpha_{\B_v}$ that appears in~\eqref{obj: f_u with valpha fixed} can be decomposed as
        \begin{align}\label{eq: f_v decomposition2}
            \vx_{\pi_v(\J_v)}^\top \mQ_{\J_v,\B_v}\valpha_{\B_v}&=\vx_{\pi_v(\J_v)}^\top \mQ_{\J_v,\B_v\backslash v}\valpha_{\B_v\backslash v}+\vx_{\pi_v(\J_v)}^\top \mQ_{\J_v,v}\valpha_{v}.
        \end{align}
        We now recall the definition of $g_v$:
        $$g_{v}(\valpha_{\B_v\backslash v})=\min\limits_{\vx_v\in\R} \left\{f_{v}(\vx_v,\valpha_{\B_v\backslash v})+\vlambda_v\bbbone(\vx_v)\right\}.$$ 
        Substituting \eqref{eq: f_v decomposition1} and \eqref{eq: f_v decomposition2} together with \eqref{eq: f_u with valpha fixed}, into the expression for \(g_v\) yields
        \begin{subequations}\label{eq: g_u for recursion}
        \begin{align}
            g_v(\valpha_{\B_v\backslash v})&=\phi_v(\valpha_{\B_v\backslash v})+\nonumber\\
            &\min_{\vx\in\R^{n_v+1},\vz\in\{0,1\}^{n_v+1}} \Biggl\{\frac{1}{2}\vx_{\pi_v(\J_v)}^\top \mQ_{\J_v,\J_v} \vx_{\pi_v(\J_v)}+\frac{1}{2}\mQ_{v,v}\vx_{\pi_v(v)}^2+\nonumber\\
            &\hspace{8em} \vx_{\pi_v(\J_v)}^\top \mQ_{\J_v,\B_v\backslash v} \valpha_{\B_v\backslash v}+\vx_{\pi_v(v)}\mQ_{v,\B_v\backslash v}\valpha_{\B_v\backslash v} +\nonumber\\
            &\hspace{8em} \vx_{\pi_v(\J_v)}^\top \mQ_{\J_v,v}\vx_{\pi_v(v)} +\vc_v \vx_{\pi_v(v)} +\vc_{\J_v}^\top \vx_{\pi_v(\J_v)}+\nonumber\\
            &\hspace{8em} \vlambda_v\vz_{\pi_v(v)}+\vlambda_{\J_v}^\top \vz_{\pi_v(\J_v)} \Biggr\}\\
            &\qquad\qquad \text{s.t.}\qquad \vx_i(1-\vz_i)=0\qquad \forall i\in \pi_v(\J_v\cup \{v\}).
        \end{align}
        \end{subequations}
        Having derived the expression for $g_v$, we now establish the equation for $f_u$ stated in the lemma. 
        
        The nodes in the induced subgraph $\supp_u(\mQ)$ can be written as the disjoint union $\B_u \cup \J_u$, and the set $\J_u$ further decomposes as
        \[
        \J_u = \bigcup_{v\in\parent_{\textsf{T}}(u)} (\J_v \cup \{v\}).
        \]
        To formalize the properties of this decomposition, we state the following claim.
        \begin{claim}\label{claim: f_u recursion}
            For any two distinct $v_i,v_j\in\parent_{\textsf{T}}(u)$, the following holds
            \begin{enumerate}
                \item $\bigl(\J_{v_i}\cup \{v_i\}\bigr)\cap\bigl(\J_{v_j}\cup \{v_j\}\bigr)=\emptyset$;
                \item For any $\bar i\in \J_{v_i}\cup \{v_i\}$ and $\bar j\in \J_{v_j}\cup \{v_j\}$, we have $\mQ_{\bar i,\bar j}=0$.
            \end{enumerate}
        \end{claim}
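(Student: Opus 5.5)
The plan is to reduce both statements to the running intersection property (property~3 of Definition~\ref{def: tree decomposition}) together with the separation that removing the bag $\B_u$ induces in $\textsf{T}$.

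For $u>n-\omega$ we have set $\parent_{\textsf{T}}(u)=\{u-1\}$, so there are no two distinct parents and the claim is vacuous. Hence assume $u\le n-\omega$. For each $v\in\parent_{\textsf{T}}(u)$, let $\textsf{T}_v$ denote the subtree of $\textsf{T}$ consisting of $v$ and all of its ancestors, and let $N(\textsf{T}_v)$ be the union of the bags in $\textsf{T}_v$. By construction, $\supp_v(\mQ)$ is induced by $N(\textsf{T}_v)=\B_v\cup\J_v$.

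\textbf{Step 1: identify $\J_v\cup\{v\}$.} By the labeling scheme, $\child_{\textsf{T}}(v)=u$ and $\B_v\setminus\B_u=\{v\}$, so $\B_v\setminus\{v\}\subseteq\B_u$. I will show $\J_v\cup\{v\}=N(\textsf{T}_v)\setminus\B_u$.
\begin{itemize}
\item A node $x\in N(\textsf{T}_v)$ that lies in $\B_u$ must also lie in $\B_v$, by running intersection: the bags containing $x$ form a connected subtree meeting both $\textsf{T}_v$ and $u$, and the edge $(v,u)$ is the only link between them. It cannot equal $v$, since $v\notin\B_u$, so $x\in\B_v\setminus\{v\}$.
\item Conversely, every node of $\B_v\setminus\{v\}$ lies in $\B_u$.
\end{itemize}
Together these give the identity. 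In particular, every node of $\J_v\cup\{v\}$ avoids $\B_u$.

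\textbf{Step 2: confinement.} Fix $x\in\J_v\cup\{v\}$. Its bag-set $S_x=\{t:x\in\B_t\}$ is connected in $\textsf{T}$ by property~3, does not contain $u$ by Step~1, and meets $\textsf{T}_v$. In the tree $\textsf{T}$, deleting the vertex $u$ leaves $\textsf{T}_v$ as its own connected component, since $u$ is $v$'s unique child. A connected vertex set that avoids $u$ and meets $\textsf{T}_v$ therefore lies entirely in $\textsf{T}_v$. Hence $S_x\subseteq\textsf{T}_v$.

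\textbf{Step 3: conclude both items.} For distinct parents $v_i,v_j$, the subtrees $\textsf{T}_{v_i}$ and $\textsf{T}_{v_j}$ are disjoint components of $\textsf{T}\setminus\{u\}$.
\begin{enumerate}
\item If $x$ belonged to both $\J_{v_i}\cup\{v_i\}$ and $\J_{v_j}\cup\{v_j\}$, Step~2 would force $\emptyset\ne S_x\subseteq\textsf{T}_{v_i}\cap\textsf{T}_{v_j}=\emptyset$, a contradiction. This proves item~1.
\item For item~2, take $\bar i\in\J_{v_i}\cup\{v_i\}$ and $\bar j\in\J_{v_j}\cup\{v_j\}$; they are distinct by item~1. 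If $\mQ_{\bar i,\bar j}\neq 0$, then $(\bar i,\bar j)$ is an edge of $\supp(\mQ)$. Property~2 of Definition~\ref{def: tree decomposition} then gives a bag $t$ with $t\in S_{\bar i}\cap S_{\bar j}\subseteq\textsf{T}_{v_i}\cap\textsf{T}_{v_j}=\emptyset$, again a contradiction.
\end{enumerate}

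The main obstacle is Step~2, the confinement of each node's bag-set to a single branch. It needs Step~1 to ensure that $u$ is not in $S_x$, which in turn relies on the balanced-decomposition fact $\B_v\setminus\B_u=\{v\}$. Once that is in place, the rest is direct.
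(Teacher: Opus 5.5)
Your proof is correct and rests on the same idea as the paper's: the running intersection property, combined with the fact that $u$ separates the branches $\textsf{T}_{v_i}$ and $\textsf{T}_{v_j}$ of $\textsf{T}$. Your version is tighter in two places. Step~1 actually proves $(\J_v\cup\{v\})\cap\B_u=\emptyset$, which the paper only asserts by writing $\J_{v_i}\cup\{v_i\}\subset\J_u$. The confinement in Step~2 then makes item~2 a one-line consequence, whereas the paper first localizes a common bag to some parent bag $\B_v$ and then reapplies running intersection.
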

        \begin{proof}[Proof of Claim~\ref{claim: f_u recursion}.]
        Suppose by contradiction there exists $\ell\in \bigl(\J_{v_i}\cup \{v_i\}\bigr)\cap\bigl(\J_{v_j}\cup \{v_j\}\bigr) $. By the running intersection property (the third condition of Definition~\ref{def: tree decomposition}) of a tree decomposition, the collection of bags containing $\ell$ induces a connected subtree of $\textsf{T}$. Since the bag $\B_u$ lies on the unique path connecting the subtrees rooted at $v_i$ and $v_j$, it follows that $\ell\in \B_u$.
        On the other hand, $J_{v_i}\cup\{v_{i}\} \subset \J_u$ and $\J_{v_j}\cup\{v_j\} \subset \J_u$; moreover, by construction, $\J_u \cap \B_u = \emptyset$. Hence $\bigl(\J_{v_i} \cup {v_i}\bigr) \cap \B_u = \emptyset$ and $(\J_{v_j} \cup {v_j}) \cap \B_u = \emptyset$. This leads to a contradiction since $\ell\in \B_u$. Therefore, $\ell\in \bigl(\J_{v_i}\cup \{v_i\}\bigr)\cap\bigl(\J_{v_j}\cup \{v_j\}\bigr)=\emptyset$, thereby completing the proof of the first statement.

         We proceed to prove the second statement. Fix \(\bar i\in \J_{v_i}\cup \{v_i\}\) and \(\bar j\in \J_{v_j}\cup \{v_j\}\). Suppose by contradiction $\mQ_{\bar{i},\bar{j}} \neq 0$. Then, from the second condition of Definition~\ref{def: tree decomposition}, there must exist a bag that contains both $\bar{i}$ and $\bar{j}$. We proceed to show that this bag cannot exist.
        
         By the first statement of the claim, the sets $\{\J_v\cup\{v\}\}_{v\in\parent_{\textsf{T}}(u)}$ are pairwise disjoint. Hence, since \(\bar i\in \J_{v_i}\cup \{v_i\}\), we have 
         $\bar i\notin \J_{v}\cup\{v\}, \forall v\in\parent_{\textsf{T}}(u)\backslash\{v_i\},$ and similarly
         $\bar j\notin \J_{v}\cup\{v\}, \forall v\in\parent_{\textsf{T}}(u)\backslash\{v_j\}.$ 
         Consequently, the only bags in the tree decomposition that can possibly contain both $\bar i$ and $\bar j$ are $\B_u$ and the bags $\{\B_v\}_{v\in\parent_{\textsf{T}}(u)}$. By construction, however, $\B_u$ contains neither $\bar i$ nor $\bar j$. Therefore, there must exist some $v\in\parent_{\textsf{T}}(u)$ such that $\B_v$ contains both $\bar i$ and $\bar j$.
         Now, by the running intersection property of tree decomposition, the collection of bags containing $\bar i$ forms a connected subtree of $\textsf{T}$, and the same holds for $\bar j$. Since $\B_v$ contains both nodes and $u$ is adjacent to $v$, it follows that $\B_u$ must belong to at least one of these two connected subtrees. In particular, this implies that  $\bar i\in \B_u$ or $\bar j\in \B_u$. This contradicts the fact that $\B_u$ contains neither $\bar i$ nor $\bar j$, thereby completing the proof of the second statement.
    \end{proof}\medskip
     
    Consider the local parametric cost $f_u$ at node $u$. Similar to~\eqref{eq: f_u with valpha fixed}, we have
    \begin{align*}
        f_u(\valpha_{\B_u})&=\frac{1}{2}\valpha_{\B_u}^\top \mQ_{\B_u,\B_u} \valpha_{\B_u}+\vc_{\B_u}^\top \valpha_{\B_u}+\\
        &\min_{\vx\in\R^{n_u},\vz\in\{0,1\}^{n_u}}\Biggl\{ \frac{1}{2}\vx_{\pi_u(\J_u)}^\top \mQ_{\J_u,\J_u} \vx_{\pi_u(\J_u)}+\vx_{\pi_u(\J_u)}^\top \mQ_{\J_u,\B_u} \valpha_{\B_u}+\vc_{\J_u}^\top \vx_{\pi_u(\J_u)}+\vlambda_{\J_u}^\top \vz_{\pi_u(\J_u)} \Biggr\}\\
        &\qquad\qquad \text{s.t.}\qquad \vx_i(1-\vz_i)=0\qquad \forall i\in \pi_u(\J_u).
    \end{align*}
     From~\eqref{eq::h}, the first two terms of the objective function coincide with $h_u(\valpha_{\B_u})$.
    We now turn to the remaining terms in the objective function. First, note that
    \begin{align*}
        \frac{1}{2}\vx_{\pi_u(\J_u)}^\top\mQ_{\J_u,\J_u}\vx_{\pi_u(\J_u)}&=\sum_{v\in\parent_\textsf{T}(u)} \Bigl(\frac{1}{2}\vx_{\pi_u(\J_v\cup v)}^\top\mQ_{\J_v\cup v,\J_v\cup v}\vx_{\pi_u(\J_v\cup v)}\Bigr)\\
        &=\sum_{v\in\parent_\textsf{T}(u)} \Bigl(\frac{1}{2}\vx_{\pi_u(\J_v)}^\top\mQ_{\J_v,\J_v}\vx_{\pi_u(\J_v)}+\frac{1}{2}\mQ_{v,v}\vx^2_{\pi_u(v)}+\vx_{\pi_u(v)}\mQ_{v,\J_v}x_{\pi_u(\J_v)}\Bigr).
    \end{align*}
    This decomposition uses Claim~\ref{claim: f_u recursion}, which asserts that the union $\J_u = \bigcup_{v\in\parent_{\textsf{T}}(u)} \bigl(\J_v \cup \{v\}\bigr)$ is disjoint and the cross blocks satisfy
    \(\mQ_{\J_{v_i}\cup\{v_i\},\,\J_{v_j}\cup\{v_j\}}=\mathbf{0}\), for all distinct
    \(v_i, v_j\in \parent_{\textsf{T}}(u)\).
    
    Next, we consider the the bilinear term coupling variables $\vx_{\pi_u(\J_u)}$ and $\valpha_{\B_u}$.
    \begin{align*}
        \vx_{\pi_u(\J_u)}^\top\mQ_{\J_u,\B_u}\valpha_{\B_u}&=\sum_{v\in\parent_\textsf{T}(u)}\Bigl(\vx_{\pi_u(\J_v\cup v)}^\top\mQ_{\J_v\cup v,\B_u}\valpha_{\B_u}\Bigr)\\
        &=\sum_{v\in\parent_\textsf{T}(u)}\Bigl(\vx_{\pi_u(\J_v)}^\top\mQ_{\J_v,\B_u}\valpha_{\B_u}+\vx_{\pi_u(v)}^\top\mQ_{v,\B_u}\valpha_{\B_u}\Bigr)\\
        &=\sum_{v\in\parent_\textsf{T}(u)}\Bigl(\vx_{\pi_u(\J_v)}^\top\mQ_{\J_v,\B_v\backslash v}\valpha_{\B_v\backslash v}+\vx_{\pi_u(v)}^\top\mQ_{v,\B_v\backslash v}\valpha_{\B_v\backslash v}\Bigr).
    \end{align*}
    To see the third equality, first note that the bag \(\B_u\) admits the disjoint decomposition $\B_u = \bigl(\B_v\setminus\{v\}\bigr) \,\cup\, \bigl(\B_u\setminus \B_v\bigr), \text{for any } v\in\parent_{\textsf{T}}(u)$.
    Furthermore, \(\mQ_{\J_v,\B_u\setminus \B_v} = \mathbf{0}\), since there are no edges between nodes in \(\J_v\) and nodes in \(\B_u\setminus \B_v\).

    We now turn to the remaining terms:
    \begin{align*}
        \vc_{\J_u}^\top \vx_{\pi_u(\J_u)}+\vlambda_{\J_u}^\top \vz_{\pi_u(\J_u)}&=\sum_{v\in\parent_{\textsf{T}}(u)}\Bigl(\vc_{\J_v}^\top \vx_{\pi_u(\J_v)}+\vc_v\vx_{\pi_u(v)}+\vlambda_{\J_v}^\top \vz_{\pi_u(\J_v)}+\vlambda_v\vz_{\pi_v(v)}\Bigr).
    \end{align*}\medskip
    Substituting the terms derived above into the expression for $f_u$, we obtain
    \begin{align*}
        f_u(\valpha_{\B_u})&=h_u(\valpha_{\B_u})+\\
        &\min_{\vx\in\R^{n_u},\vz\in\{0,1\}^{n_u}} \sum_{v\in\parent_{\textsf{T}}(u)}\Bigl( \frac{1}{2}\vx_{\pi_u(\J_v)}^\top\mQ_{\J_v,\J_v}\vx_{\pi_u(\J_v)}+
        \frac{1}{2}\mQ_{v,v}\vx^2_{\pi_u(v)}+\vx_{\pi_u(v)}\mQ_{v,\J_v}x_{\pi_u(\J_v)}\\
        &\qquad\qquad+\vx_{\pi_u(\J_v)}^\top\mQ_{\J_v,\B_v\backslash v}\valpha_{\B_v\backslash v}+\vx_{\pi_u(v)}^\top\mQ_{v,\B_v\backslash v}\valpha_{\B_v\backslash v}\\
        &\qquad\qquad+\vc_{\J_v}^\top \vx_{\pi_u(\J_v)}+\vc_v\vx_{\pi_u(v)}+\vlambda_{\J_v}^\top \vz_{\pi_u(\J_v)}+\vlambda_v\vz_{\pi_u(v)}\Bigr)\\
        &\qquad\qquad \text{s.t.}\qquad \vx_i(1-\vz_i)=0\qquad \forall i\in \pi_u(\J_u).
    \end{align*}
    Since each term in the sum depends only on the variables in $\J_{v}\cup\{v\}$, the optimization problem decomposes into independent subproblems for each $v\in\parent_{\textsf{T}}(u)$. Furthermore, each subproblems equals $g_v(\valpha_{\B_v\backslash v})-\phi_v(\valpha_{\B_v\backslash v})$ by~\eqref{eq: g_u for recursion}. Hence, we conclude that,
    $$f_u(\valpha_{\B_u})=h_u(\valpha_{\B_u})+\sum_{v\in\parent_\textsf{T}(u)}g_v(\B_v\backslash v)-\phi_v(\valpha_{\B_v\backslash v}).$$
    \qed
\subsection{Proof of Lemma~\ref{lemma: U}}\label{app:proof-U}
Let $\J = \{i: \vz^\star_i=1\}$. Then, $\vx^{\star}=(\mQ_{\J,\J})^{-1}\vc_\J,$ which implies
	\begin{align*}
		\|\vx^{\star}\|_{\infty}&=\left\|(\mQ_{\J,\J})^{-1}\vc_\J\right\|_{\infty}\le \left\|(\mQ_{\J,\J})^{-1}\right\|_{\infty} \left\|\vc_\J\right\|_{\infty}\le \left\|(\mQ_{\J,\J})^{-1}\right\|_{\infty} \|\vc\|_{\infty}.
	\end{align*}
	We now derive an upper bound on $\|(\mQ_{\J,\J})^{-1}\|_{\infty}$. By Lemma~\ref{thm: decaying inv}, 
\begin{align*}
    \|(\mQ_{\J,\J})^{-1}\|_{\infty}
    \le \max_{i \in \J} \sum_{j \in \J} 
    \bigl| [\mQ^{-1}_{\I,\I}]_{\pi(i),\pi(j)} \bigr|
    \le \max_{i \in \J} \sum_{j \in \J} C_1 \rho^{\dist(i,j)} .
\end{align*}
Let $N_{r,i}$ denote the number of nodes in $\supp(\mQ)$ at distance exactly $r$ from node $i$, and set $N_r := \max_{i} \{N_{r,i}\}$. Then
\begin{align*}
    \max_{i \in \J}\sum_{j \in \J} C_1 \rho^{\dist(i,j)}
    \;\le\;
    C_1 \sum_{r=0}^{n} N_r \rho^{r}.
\end{align*}

We distinguish two cases:

\begin{itemize}

\item If $\mQ$ is banded with bandwidth $\bw$, then $N_r \le 2\bw$ for all $0 \le r \le n$. Hence,
\begin{align*}
    \|(\mQ_{\J,\J})^{-1}\|_{\infty}
    \le C_1 \sum_{r=0}^{n} 2\bw\, \rho^r
    \le 2\bw C_1 \sum_{r=0}^{\infty} \rho^r
    = \frac{2\bw C_1}{1-\rho}.
\end{align*}

\item If $\supp(\mQ)$ satisfies the polynomial volume-growth condition (Assumption~\ref{assumption: poly growth of m-deg}), then $N_r \le \delta r^\gamma$. Therefore,
\begin{align*}
    \|(\mQ_{\J,\J})^{-1}\|_{\infty}
    \le
    C_1 \sum_{r=0}^{n} \delta r^\gamma \rho^r
    \le \delta C_1 \sum_{r=0}^{\infty} r^\gamma \rho^r
    \le \frac{\delta \gamma! \, C_1}{(1-\rho)^{\gamma+1}},
\end{align*}
where the last inequality uses the classical bound $\sum_{r=0}^{\infty} r^\gamma \rho^r \le \frac{\gamma!}{(1-\rho)^{\gamma+1}}$;
see, e.g., \cite[Proposition~1.4.4]{charalambides2018enumerative}.
\end{itemize}
\qed

    \subsection{Proof of Lemma~\ref{lemma: roots bound}}\label{subsec:roots-proof}
Consider the quadratic equation $ p_{1}(\valpha)- p_{2}(\valpha)=0$. Let $\hat\valpha$ denote its solution. Then
	\begin{align*}
		 & p_{1}(\hat \valpha)- p_{2}(\hat \valpha)=0\\
		\implies & \frac{1}{2}\hat\valpha^\top(\mA_{1}-\mA_{2})\hat\valpha+(\vb_{1}-\vb_{2})^\top \hat\valpha+(d_{1}-d_{2})=0\\
		\implies &\frac{1}{2}\norm{\mA_1-\mA_2}_{1,1} \|\hat\valpha\|_{\infty}^2+\|\vb_{1}-\vb_{2}\|_1\|\hat\valpha\|_\infty+(d_{1}-d_{2})\ge 0\\
        \implies & \bar a \|\hat\valpha\|_{\infty}^2+\bar b\|\hat\valpha\|_{\infty}\ge \bar d,
	\end{align*}
	where, without loss of generality, we assume $d_{2}\ge d_{1}$. 
	We consider two cases. Indeed, if $\bar a=0$, then
	\begin{align*}
		\|\valpha\|_{\infty}\ge \frac{\bar d}{\bar b }.
	\end{align*}
On the other hand, if $\bar a>0$, we have
	\begin{align*}   
		&\|\hat \valpha\|_{\infty}^2+\frac{\bar b}{\bar a}\|\hat\valpha\|_{\infty}\ge \frac{\bar d}{\bar a}\\
        \implies &\|\hat \valpha\|_{\infty}^2+\frac{\bar b}{\bar a}\|\hat\valpha\|_{\infty}+\frac{\bar b^2}{4\bar a^2}\ge \frac{\bar d}{\bar a}+\frac{\bar b^2}{4\bar a^2}\\
		\implies &\|\hat \valpha\|_{\infty}\ge-\frac{\bar b}{2\bar a}+ \sqrt{\frac{\bar d}{\bar a}+\frac{\bar b^2}{4\bar a^2}}.
	\end{align*}
	Rearranging the terms, we get 
	\begin{align*}
		\|\hat \valpha\|_\infty&\ge\frac{-\bar b+\sqrt{\bar b^2+4\bar a\bar d}}{2\bar a}.
	\end{align*}
	Combining the two cases, we conclude that
	\begin{align*}
		\|\hat\valpha\|_{\infty}\ge\begin{cases}
			\frac{-\bar b+\sqrt{\bar b^2+4\bar a\bar d}}{2\bar a}&\text{if}\ \bar a\ne 0,\\
			\frac{\bar d}{\bar b }&\text{if}\ \bar a= 0.
		\end{cases}
	\end{align*}
	\qed\medskip

\subsection{Proof of Lemma~\ref{thm: decaying inv}}\label{app:exp-decay-proof}

The proof proceeds via a polynomial approximation of the function $\psi(x)=x^{-1}$ on a
compact interval, for which we use the following result.

\begin{proposition}[Polynomial approximation of $x^{-1}$,\citep{meinardus2012approximation}]\label{prop: chebyshev}

Let $0<a<b$. For $\ell \ge 0$, let $\Pi_\ell$ denote the set of polynomials of degree at most $\ell$, and define
\[
e_\ell([a,b]) := \inf_{\bar p\in\Pi_\ell}\, \max_{x\in[a,b]} \bigl|x^{-1}-\bar p(x)\bigr|.
\]
Set $r=b/a$ and
\[
\rho := \frac{\sqrt{r}-1}{\sqrt{r}+1}.
\]
Then
\[
e_\ell([a,b]) = \frac{(1+\sqrt{r})^2}{2ar}\, \rho^{\,\ell+1}.
\]
\end{proposition}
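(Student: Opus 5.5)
The plan is the classical Chebyshev route: first reduce to the standard interval, then write down an explicit error function that equioscillates, and finally invoke the alternation theorem.

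\textbf{Step 1 (normalization).} Use the affine map $x=\tfrac{b-a}{2}t+\tfrac{b+a}{2}$, which sends $t\in[-1,1]$ to $x\in[a,b]$. It gives $x^{-1}=\tfrac{2}{b-a}\,(t-t_0)^{-1}$ with $t_0=-\tfrac{b+a}{b-a}<-1$. Affine substitutions preserve $\Pi_\ell$, so
\[
e_\ell([a,b])=\tfrac{2}{b-a}\,E_\ell,
\]
where $E_\ell$ is the best uniform error for $(t-t_0)^{-1}$ on $[-1,1]$.

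Next, write $t=\tfrac12(w+w^{-1})$ with $|w|=1$, and $t_0=\tfrac12(s+s^{-1})$ with $s\in(-1,0)$. A short computation gives $s=-\tfrac{\sqrt r-1}{\sqrt r+1}=-\rho$, which is where $\rho$ enters.

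\textbf{Step 2 (explicit equioscillating error).} Define the Blaschke-type function $B(w)=w^{\ell+1}\tfrac{1-sw}{w-s}$, which has modulus $1$ on $|w|=1$. Take the candidate error
\[
\mathcal E(t)=\tfrac{M}{2}\bigl(B(w)+B(w^{-1})\bigr),
\]
with the constant $M$ chosen so that $(t-t_0)^{-1}-\mathcal E(t)$ is a polynomial of degree at most $\ell$ in $t$. To check this:
\begin{itemize}
\item $\mathcal E$ is symmetric under $w\mapsto w^{-1}$, so it is a function of $t$.
\item Its only singularities in $w$ are at $w=s$ and $w=s^{-1}$ (both corresponding to $t=t_0$) and at $w=0,\infty$.
\item Matching the residue at $t=t_0$ with that of $(t-t_0)^{-1}$ fixes $M$; I expect $M\propto |s|^{\ell+1}/(1-s^2)$.
\item Counting the growth at $w=\infty$ then shows that the difference is a Laurent polynomial symmetric in $w\leftrightarrow w^{-1}$ of degree at most $\ell$, hence a polynomial in $t$ of degree at most $\ell$.
\end{itemize}
This residue and degree bookkeeping is the step I expect to be the main obstacle, since the exact power of $w$ in $B$ must be tuned (for example $w^{\ell}$ versus $w^{\ell+1}$) so that the degrees match. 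I would settle it first by checking $\ell=0$ explicitly.

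\textbf{Step 3 (alternation and constant).} On $|w|=1$ we have $\mathcal E=M\cos\Theta(\theta)$, where $\Theta=\arg B(e^{i\theta})$ is strictly increasing. As $\theta$ runs over $[0,\pi]$, $\Theta$ increases by $(\ell+1)\pi$, so $\mathcal E$ attains $\pm M$ alternately at $\ell+2$ points of $[-1,1]$. By Chebyshev's alternation theorem, $E_\ell=M$.

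Finally, substitute $s=-\rho$ and $\tfrac{2}{b-a}$ into $M$ and simplify to $\tfrac{(1+\sqrt r)^2}{2ar}\rho^{\ell+1}$. As a sanity check, for $\ell=0$ the target formula equals $\tfrac{r-1}{2ar}=\tfrac{b-a}{2ab}=\tfrac12(a^{-1}-b^{-1})$, which is indeed the best constant approximation error of $x^{-1}$ on $[a,b]$. This check also pins down the normalization of $M$.
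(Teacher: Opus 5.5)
The gap is in Step 2: the function $B(w)=w^{\ell+1}\frac{1-sw}{w-s}$ cannot work, and adjusting only the power of $w$ does not repair it. Your overall route is the classical Chebyshev--Akhiezer argument behind the result, which the paper does not prove but only cites from Meinardus: affine reduction to $[-1,1]$, the substitution $t=\frac12(w+w^{-1})$, an explicit unimodular error, and alternation. Step~1, including $s=-\rho$, is correct.

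With your $B$, at $\infty$ you get $B(w)\sim -s\,w^{\ell+1}$, so $\mathcal E-(t-t_0)^{-1}$ is a polynomial of degree $\ell+1$. On $|w|=1$ you have $\frac{1-sw}{w-s}=e^{i\theta}e^{-2i\phi(\theta)}$ with $\phi(\theta)=\arg(e^{i\theta}+\rho)$. Hence $\Theta=(\ell+2)\theta-2\phi$, which rises by only $\ell\pi$ on $[0,\pi]$. By the argument principle this is expected: $B$ has $\ell+1$ zeros and one pole (at $s$) inside the disk, so winding number $\ell$. Moreover $\Theta'(\pi)=\ell+2-\frac{2}{1-\rho}<0$ once $\rho>\frac{\ell}{\ell+2}$. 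So both claims of Step~3 (strict monotonicity and an increase of $(\ell+1)\pi$) fail, and the residue normalization would give $|M|=4|s|^{1-\ell}/(1-s^2)^2$, which grows with $\ell$.

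Your proposed calibration at $\ell=0$ would mislead you. There $w^{0}\frac{1-sw}{w-s}$ does work, but only because after symmetrization it coincides with $\frac{w-s}{1-sw}$. For $\ell\ge 1$, $w^{\ell}\frac{1-sw}{w-s}$ has winding number $\ell-1$ and gives only the non-sharp bound $4|s|^{2-\ell}/(1-s^2)^2$.

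The missing idea is that the Blaschke factor must have its \emph{zero}, not its pole, at $s$ inside the disk. Take $B(w)=w^{\ell}\frac{w-s}{1-sw}$, or equivalently $w^{-\ell}\frac{1-sw}{w-s}$. Then:
\begin{itemize}
\item $B$ has $\ell+1$ zeros and no poles in the disk, and $B(w)=O(w^{\ell})$ at $\infty$.
\item $\Theta=(\ell-1)\theta+2\phi$, with $\Theta'=\ell-1+\frac{2(1+\rho\cos\theta)}{1+2\rho\cos\theta+\rho^2}>0$. For $\ell=0$ this equals $\frac{1-\rho^2}{1+2\rho\cos\theta+\rho^2}$.
\item $\Theta(\pi)=(\ell+1)\pi$, which yields the required $\ell+2$ alternation points $\cos\theta_j$ with $\Theta(\theta_j)=j\pi$.
\end{itemize}
For the constant, near $w=s$ one has $t-t_0\approx\frac{(s^2-1)(w-s)}{2s^2}$, and the pole comes from $B(w^{-1})\approx s^{-\ell}\frac{1-s^2}{w-s}$. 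Unit residue then forces $|M|=\frac{4\rho^{\ell+2}}{(1-\rho^2)^2}$, not something of the form $|s|^{\ell+1}/(1-s^2)$. Using $1-\rho^2=\frac{4\sqrt r}{(1+\sqrt r)^2}$ and $\rho(1+\sqrt r)^2=r-1=\frac{b-a}{a}$, you get $\frac{2}{b-a}|M|=\frac{(1+\sqrt r)^2}{2ar}\rho^{\ell+1}$, as claimed. With this correction your argument becomes a complete, self-contained proof of a statement the paper only imports.
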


Let $a=\mu_{\min}(\mQ), b=\mu_{\max}(\mQ)$, $C_0=\frac{(1+\sqrt{\kappa_2})^2}{2\kappa_2\mu_{\min}(\mQ)}$ and $\sigma(\mQ)$ denote the set of eigen values of $\mQ$. Since $\mQ$ is positive definite and invertible, we have $0<a<b$. From the Proposition~\ref{prop: chebyshev} there exist a sequence of polynomials $\bar p_\ell\in \Pi_{\ell}$ satisfying 
$$
\max_{x\in [a,b]} \left\{\left|x^{-1}-\bar p_{\ell}(x)\right|\right\}=C_{0}\rho^{\ell+1}.
$$
From spectral theory \citep{rudin1973functional}
\begin{align*}
    \|\mQ^{-1}-\bar p_{\ell}(\mQ)\|_2&=\max_{x\in \sigma(A)} \left\{\left|x^{-1}-\bar p_{\ell}(x)\right|\right\}\\
    &\le \max_{x\in [a,b]} \left\{\left|x^{-1}-\bar p_{\ell}(x)\right|\right\}\\
    &=C_{0}\rho^{\ell+1}.
\end{align*}

Moreover, for every integer $k\ge 0$ we have
\[
(\mQ^k)_{ij}=0 \qquad \text{whenever } \dist(i,j)>k.
\]
Hence, if $\bar p_k\in \Pi_k$ then $\bar p_k(\mQ)$ satisfies
\[
\bar p_k(\mQ)_{ij}=0 \qquad \text{whenever } \dist(i,j)>k,
\]
since $\bar p_k(\mQ)$ is a linear combination of $\mathbf{I},\mQ,\ldots,{\mQ}^k$.

Let $i\neq j$ and choose $\ell=\dist(i,j)-1$. Then $\bar p_\ell(\mQ)_{ij}=0$, and hence
\[
\bigl|\mQ^{-1}_{ij}\bigr|
= \bigl|\mQ^{-1}_{ij}-\bar p_\ell(\mQ)_{ij}\bigr|
\le \bigl\|\mQ^{-1}-\bar p_{\ell}(\mQ)\bigr\|_2
\le C_0 \rho^{\,\dist(i,j)}.
\]
If $i=j$, note that
\[
\bigl|\mQ^{-1}_{ii}\bigr| \le \bigl\|\mQ^{-1}\bigr\|_2 = \frac{1}{a},
\]
which is consistent with the stated bound. This completes the proof.\qed\medskip

\section{Heuristic for PRUNE}\label{app: trim heuristic}

To determine the set of relevant functions, the exact version of \texttt{PRUNE} requires comparing every pair of quadratic functions, leading to a runtime that is quadratic in $N$, where $N$ is the total number of functions of the input. 

To mitigate this quadratic cost, we provide a heuristic version, given in Algorithm~\ref{alg: trim heuristic}. This procedure processes the functions in a single pass, requiring only $N-1$ comparisons. This linear-time approach may retain some functions that are formally irrelevant, but the total number of retained functions remains consistent with the bound established in Lemma~\ref{lemma::pruned-set}. 

\begin{algorithm}[htbp]
	\caption{heuristic-\texttt{PRUNE}}\label{alg: trim heuristic}
	\textbf{Input:} $\tilde f(\valpha)\equiv [ p_i(\valpha)]\ \forall i=1,\ldots,N$ and the constant $U$. \\
	\textbf{Output: $ f^{\trim}$} 
	\begin{algorithmic}[1]
		\State $ f^{\trim} \leftarrow \left[ p_{1}\right]$ \Comment{Add $p_{1}$ to $f^{\trim}$}
		\State $j=1$
		\For{$i=2,\dots, N$}
			\State Calculate $L(p_{i},p_{j})$ from Equation~\eqref{eq: LB} \Comment{Lower bound of roots of $ p_i- p_j=0$}
			\If{$L(p_{i},p_{j}) > U$}
			\If{$d_i<d_j$}
			\State $f^{\trim} \leftarrow \texttt{delete}\left( f^{\trim},\texttt{end}\left(f^{\trim}\right)\right)$ \Comment{Remove the last function $ p_{j}$ from $ f^{\trim}$}
			\State $ f^{\trim} \leftarrow \texttt{append}\left( f^{\trim}, p_{i}\right)$ \Comment{Add $ p_{i}$ to $f^{\trim}$}
			\State $j=i$
			\EndIf
			\Else
			\State $ f^{\trim} \leftarrow \texttt{append}( f^{\trim}, p_{i})$
			\State $j=i$
			\EndIf
		\EndFor
		\State\Return $f^{\trim}$ 
		\end{algorithmic}
	\end{algorithm}	
This improvement is possible when the quadratic functions are stored where every consecutive pair of functions is $m$-similar. In this case, it suffices to check for irrelevance only among consecutive pairs of functions. When $\mQ$ has a tree decomposition that is a path (including the banded case), we note that this ordering can be maintained with no additional cost. However, in the general case, storing the functions in the desired order requires additional steps, which can increase computational cost, making the heuristic inefficient. For this reason, we rely on the version of \texttt{PRUNE} presented in Algorithm~\ref{alg: prune} for the general case.

\end{document}